\newcommand\blfootnotetext[1]{
  \begingroup
  \renewcommand\thefootnote{}\footnotetext{#1}
  \endgroup
}
\newcommand\D{\partial}
\newcommand{\bT}{\mathbb{T}}
\newcommand{\diff}{\mathrm{d}}
\newtheorem{theorem}{Theorem}[section]
\newtheorem{lemma}[theorem]{Lemma}
\newtheorem{proposition}[theorem]{Proposition}
\newtheorem{corollary}[theorem]{Corollary}
\theoremstyle{definition}
\newtheorem{definition}[theorem]{Definition}
\theoremstyle{remark}
\newtheorem{remark}[theorem]{Remark}
\def\inner#1{\left( #1 \right)}
\def\norm#1{\left\lVert #1 \right\rVert}
\def\abs#1{\left\lvert #1 \right\rvert}
\newcommand\Z{\mathbb{Z}}
\newcommand{\N}{\mathbb{N}}
\def\eps{\varepsilon}
\def\les{\lesssim}
\newcommand{\R}{\mathbb{R}}
\newcommand{\E}{\mathbb{E}}
\newcommand{\F}{\mathbb{F}}
\def\cA{\mathcal{A}}
\def\cF{\mathcal{F}}
\def\cG{\mathcal{G}}
\def\cY{\mathcal{Y}}
\def\cZ{\mathcal{Z}}
\renewcommand{\P}{\mathbb{P}}
\newcommand{\ind}{\mathbbm{1}}
\newcommand{\cB}{\mathcal{B}}
\newcommand{\A}{\mathcal{A}}
\newcommand{\fp}{{p}}
\newcommand{\fpp}{{p'}}
\newcommand{\intort}{{\int_{\bT}}}
\newcommand{\Ltwo}{{L^2(\bT)}}
\newcommand{\Linfty}{{{L^\infty(\bT)}}}
\newcommand{\messsup}[1]{{\mathop{#1\mbox{\rm{-ess-sup}}}}}
\newcommand{\Aunull}{{\A(u^{(0)})}}
\newcommand{\Atunull}{{\A(\tilde u^{(0)})}}
\newcommand{\Feps}{{F_\eps}}
\newcommand{\Fepsp}{{F'_\eps}}
\newcommand{\Fepsq}{{F^2_\eps}}
\newcommand{\sk}{{\sigma_k}}
\newcommand{\inttime}{{\int_{t_1}^{t_2}}}
\newcommand{\sumkz}{{\sum_{k\in\Z}}}
\def\supp{\mathop{\mathrm{supp}}}
\begin{document}
\title{Non-negative Martingale Solutions to the Stochastic Thin-Film Equation with Nonlinear Gradient Noise}

\blfootnotetext{\emph{Key words:} Stochastic partial differential equation, Degenerate-parabolic equation, Fourth-order equation, Thin fluid film, Thermal fluctuations, Martingale solution, Weak solution}

\blfootnotetext{\emph{2020 Mathematics Subject Classification:} 60H15, 35R60, 76A20, 35K65, 35R37, 35K35, 35K55, 35D30, 76D08}

\author{Konstantinos Dareiotis\thanks{School of Mathematics, University of Leeds, Leeds, LS2 9JT (K.Dareiotis@leeds.ac.uk)}, Benjamin Gess\thanks{Max Planck Institute for Mathematics in the Sciences, Inselstr.~22, 04103 Leipzig, Germany and Faculty of Mathematics, Bielefeld University, Universit\"atsstr.~25, 33615 Bielefeld, Germany (b.gess@mis.mpg.de)}, Manuel V.~Gnann\thanks{Delft Institute of Applied Mathematics, Faculty of Electrical Engineering, Mathematics and Computer Science, Delft University of Technology, Van Mourik Broekmanweg 6, 2628 XE Delft, Netherlands (M.V.Gnann@tudelft.nl)}, and G\"unther Gr\"un\thanks{Department of Mathematics, University of Erlangen-Nuremberg, Cauerstrasse 11,  91058 Erlangen, Germany (gruen@math.fau.de)}}

\maketitle

\begin{abstract}
We prove the existence of nonnegative martingale solutions to a class of stochastic degenerate-parabolic fourth-order PDEs arising in surface-tension driven thin-film flow influenced by thermal noise. The construction applies to a range of mobilites including the cubic one which occurs under the assumption of a no-slip condition at the liquid-solid interface. Since their introduction more than 15 years ago, by Davidovitch, Moro, and Stone and by Gr\"un, Mecke, and Rauscher, the existence of solutions to stochastic thin-film equations for cubic mobilities has been an open problem, even in the case of sufficiently regular noise. Our proof of global-in-time solutions relies on a careful combination of entropy and energy estimates in conjunction with a tailor-made approximation procedure to control the formation of shocks caused by the nonlinear stochastic scalar conservation law structure of the noise.
\end{abstract}

\setcounter{tocdepth}{2}
\tableofcontents
%

%%%%%%%%%%%%%%%%%%%%%
%%%%%%%%%%%%%%%%%%%%%
\section{Introduction\label{sec:intro}}
%%%%%%%%%%%%%%%%%%%%%
%%%%%%%%%%%%%%%%%%%%%
In this work, we consider the stochastic thin-film equation
\begin{equation}\label{stfe-general}
\diff u = - \D_x \left(M(u) \, \D_x^3 u\right) \diff t + \D_x \left(\sqrt{M(u)} \circ \diff W\right) \quad \mbox{in $Q_T$},
\end{equation}
where $u = u(t,x)$ denotes the height of a thin viscous film depending on the independent variables time $t \in [0,T]$, where $T \in (0,\infty)$ is fixed, and lateral position $x \in \bT$, where $\bT$ is the one-dimensional torus of length $L := \abs{\bT}$, and $Q_T := [0,T] \times \bT$. Equation~\eqref{stfe-general} describes the spreading of viscous thin films driven by capillary forces (acting at the liquid-air interface) and thermal noise and decelerated by friction (in the bulk or at the liquid-solid interface). The function $M \colon \R \to [0,\infty)$ is called mobility and the following results apply to the choice $M(r) = \abs{r}^n$ for $r \in \R$, where $n \in \left[\frac 8 3, 4\right)$. In particular, this covers the physically relevant case of a cubic mobility, that is, $n=3$, modelling no slip at the liquid-solid interface in the underlying stochastic Navier-Stokes equations of which \eqref{stfe-general} is an approximation. The symbol $W$ denotes a Wiener process in the Hilbert space $H^2(\bT)$. 

Since its introduction over 15 years ago,  by Davidovitch, Moro, and Stone in \cite{DMS2005}, and by the fourth author, Mecke, and Rauscher in \cite{GMR2006}, the existence of solutions to stochastic thin-film equations for cubic mobilities has been an open problem, even in the case of sufficiently regular noise $W$ in \eqref{stfe-general}. The solution of this problem is the main result of this work.

We refer to \cite{DeGennes1985,ODB1997,BEIMR2009} for details on the physical derivation by means of a lubrication approximation and on the relevance of \eqref{stfe-general} in the deterministic case, where $W = 0$ in $[0,T] \times \bT$. Stochastic versions of the thin-film equation have been proposed independently in \cite{DMS2005} and \cite{GMR2006}. The former paper is concerned with the question of how thermal fluctuations enhance the spreading of purely surface-tension driven flow. On the contrary, the paper \cite{GMR2006} considers the effect of noise on the stability of liquid films and time-scales of the de-wetting process. Therefore, the energy considered in \cite{GMR2006} differs from that one of \cite{DMS2005} by an additional effective interface potential -- giving rise to a so called \emph{conjoining-disjoining pressure} in the equation. We emphasize that the structure of the noise term in \eqref{stfe-general} is common to \cite{GMR2006} and \cite{DMS2005}. We further refer to \cite{DORKP2019} for a more recent derivation of the model including the discussion of detailed-balance conditions.

A first existence result of martingale solutions to stochastic thin-film equations has been obtained in \cite{FischerGruen2018} by Fischer and the fourth author of this paper, in the setting of quadratic mobility  $M(r) = r^2$, additional conjoining-disjoining pressure, and It\^o noise.
We also mention the paper \cite{Cornalba2018} by Cornalba who introduced additional nonlocal source terms and in this way obtained results for more general mobilities. In \cite{GessGnann2020}, the second and the third author of this paper have studied \eqref{stfe-general} with Stratonovich noise and quadratic mobility $M(r) = r^2$ without conjoining-disjoining pressure. It turns out that non-negative martingale solutions exist that allow for touch down of solutions with complete-wetting boundary conditions. The case of quadratic mobility is special and simpler since in this case the stochastic part in \eqref{stfe-general} becomes linear. This allows to separately treat the deterministic and stochastic parts in \eqref{stfe-general}, a fact crucial to the approach in \cite{GessGnann2020}, and which fails in the case of non-quadratic mobility. 

In this paper, we study the existence of weak (or martingale) solutions to \eqref{stfe-general} in the situation in which the gradient-noise term $\D_x \left(\sqrt{M(u)} \circ \diff W\right)$ is nonlinear in the film height $u$, in particular covering the situation $M(r) = \abs{r}^3$. This includes precisely the situation studied in \cite{DMS2005} in the complete-wetting regime.

The analysis of the present work is based on a combination of estimates of the surface (excess) energy $\frac 1 2 \int_{\bT} (\D_x u)^2 \, \diff x = \frac 1 2 \norm{\D_x u}_{L^2(\bT)}^2$ and the (mathematical) entropy $\int_{\bT} G_0(u) \, \diff x$, where
\begin{equation}\label{eq:entropy-eps-0}
G_0(r) = \begin{cases} \frac{r^{2-n}}{(2-n) (1-n)} & \mbox{ for } r > 0, \\ \infty & \mbox{ for } r \le 0. \end{cases}
\end{equation}
The main difficulty comes from the fact that - in contrast to the case of quadratic mobility and Stratonovich noise  - the energy estimate cannot be closed on its own. This is caused by the nonlinear, stochastic conservation law structure of the noise in  \eqref{stfe-general}. Indeed, this nonlinear structure may lead to the occurrence of shocks and, hence, to the  blow up of the energy $\frac 1 2 \norm{\D_x u}_{L^2(\bT)}^2$. In the light of this, the task becomes to understand if the thin-film operator, that is, the deterministic part in \eqref{stfe-general}, has a sufficiently strong regularity improving effect to compensate the possible energy blow up caused by the stochastic perturbation. Since the thin-film operator degenerates when $u \approx 0$, this requires a control on the smallness of $u$. Such a control is obtained by the entropy estimate, which explains its importance in the case of non-quadratic mobility. Indeed, in the present work we prove that a blow up of the energy can be ruled out by means of a combination of energy and entropy estimates.
Once this importance of the entropy estimate for the construction of weak solutions to \eqref{stfe-general} is understood, the next task is to find approximations to \eqref{stfe-general} which allow for uniform (energy) estimates. In light of the previous discussion,  these approximations are chosen in a careful  way,  compatible with  both energy and entropy estimates. 

We next give a brief account on the literature for the deterministic thin-film equation: A theory of existence of weak solutions for the deterministic thin-film equation has been developed in \cite{BF1990,BBDP1995,BP1996} and \cite{Otto1998,BGK2005,Mellet2015} for zero and nonzero contact angles at the intersection of the liquid-gas and liquid-solid interfaces, respectively, while the higher-dimensional version of \eqref{stfe-general} with $W = 0$ in $[0,T] \times \bT$ and zero contact angles has been the subject of \cite{PGG1998,Gruen2004}. For these solutions, a number of quantitative results has been obtained -- including optimal estimates on spreading rates of free boundaries, i.e.\ the triple lines separating liquid, gas, and solid, see \cite{HulshofShishkov98, BDPGG1998, Gruen2002, SupportPropagationThinFilm}, optimal conditions on the occurrence of waiting time phenomena \cite{WaitingTime}, as well as scaling laws for the size of waiting times \cite{LowerBounds, UpperBoundsThinFilmWeakSlippage}. We also  refer to \cite{NumericsGruen} for an existence result based on numerical analysis.

A corresponding theory of classical solutions, giving the existence and uniqueness for initial data close to generic solutions or short times, has been developed in \cite{GKO2008,GK2010,GGKO2014,Gnann2015,Gnann2016,GIM2019} for zero contact angles and in \cite{Knuepfer2011,KM2013,KM2015,Knuepfer2015,Esselborn2016} for nonzero contact angles in one space dimension, while the higher-dimensional version has been the subject of \cite{John2015,Seis2018,GP2018} and \cite{Degtyarev2017} for zero and nonzero contact angles, respectively.

The paper is structured as follows: In \S\ref{sec:main}, we introduce the necessary mathematical framework and state our main result on existence of martingale solutions. In \S\ref{sec:galerkin} we introduce a suitable approximation of \eqref{stfe-general} using a Galerkin scheme, a regularization of the mobility $M$ controlled by a small parameter $\eps$, and a cut-off in $\norm{u}_{L^\infty(\bT)}$. The Galerkin scheme only makes use of the energy inequality, which is valid also in the infinite-dimensional setting but ceases to hold as $\eps \searrow 0$. In \S\ref{sec:apriori} we then derive an energy-entropy estimate which is uniform in $\eps$ and the cut-off in $\norm{u}_{L^\infty(\bT)}$ (the latter is removed at the end of this section). Finally, in \S\ref{sec:eps-limit} the limit $\eps \searrow 0$ is carried out and the existence of martingale solutions to the original problem \eqref{stfe-general} is obtained.

%%%%%%%%%%%%%%%%%%%%%
%%%%%%%%%%%%%%%%%%%%%
\section{Setting and Main Result\label{sec:main}}
%%%%%%%%%%%%%%%%%%%%%
%%%%%%%%%%%%%%%%%%%%%
\subsection{Notation}
%%%%%%%%%%%%%%%%%%%%%
%%%%%%%%%%%%%%%%%%%%%
For a set $X$ and $A \subseteq X$ we write $\ind_A \colon X \to \{0,1\}$ for the indicator function of $A$, that is,
\[
\ind_A(x) := \begin{cases} 1 & \mbox{ for } x \in A, \\ 0 & \mbox{ for } x \in X \setminus A. \end{cases}
\]

For a measurable set $D\subseteq \R^d$, where $d \in \N$, we write $\abs{D}$ for its $d$-dimensional Lebesgue measure. We write $\bT := \R / (L \Z)$ for the one-dimensional torus of length $L > 0$. For any $T \in [0,\infty)$ we write $Q_T := [0,T] \times \bT$ for the corresponding parabolic cylinder.

For ${\alpha_1, \alpha_2} \in (0,1)$ and $T > 0$, we introduce the H\"older space $C^{{\alpha_1, \alpha_2}}({Q_T})$ to be the subset of all functions on $Q_T$, which satisfy 
$$ 
[u]_{C^{{\alpha_1, \alpha_2}}({Q_T})}:=\sup_{x\in\bT} \sup_{{\substack{t_1, t_2\in[0,T]\\ t_1 \ne t_2}}} \frac{|u(t_1,x)-u(t_2,x)|}{\abs{t_1-t_2}^{\alpha_1}} + \sup_{t\in[0,T]}\sup_{{\substack{x_1, x_2 \in \bT\\ x_1 \ne x_2}}}\frac{\abs{u(t,x_1)-u(t,x_2)}}{\abs{x_1-x_2}^{\alpha_2}}<\infty,$$
and we set 
$$
\|u\|_{C^{{\alpha_1, \alpha_2}}({Q_T})} := \sup_{(t,x) \in Q_T} |u(t,x)|+[u]_{C^{{\alpha_1, \alpha_2}}({Q_T})}.
$$
For $p \in [1,\infty]$, a measure space $\left(\Omega,\cA,\mu\right)$, and a Banach space $\left(X,\norm{\cdot}\right)$, we write $L^p\left(\Omega,\cA,\mu;X\right)$ for the $X$-valued Lebesgue space of $\mu$-measurable functions $\Omega \to X$ with separable range and finite norm $\norm{\cdot}_{L^p(\Omega,\cA,\mu;X)}$, where
\[
\norm{v}_{L^p(\Omega,\cA,\mu;X)} := \begin{cases} \left(\int_\Omega \abs{v(y)}^p \diff \mu(y)\right)^{\frac 1 p} & \mbox{ if } p \in [1,\infty), \\ \messsup{\mu}_{y \in \Omega} \abs{v(y)} & \mbox{ if } p = \infty, \end{cases} 
\]
where
\[
\messsup{\mu}_{y \in \Omega} \abs{v(y)} := \inf\left\{C \in [0,\infty] \colon \norm{v} \le C \mbox{ $\mu$-almost everywhere}\right\}
\]
denotes the essential supremum of $\abs{v}$. For $p = 2$ and a Hilbert space $\left(X,\inner{\cdot,\cdot}\right)$, we have $\norm{v}_{L^2(U,\cA,\mu;X)} := \sqrt{\inner{v,v}_{L^2(U,\cA,\mu;X)}}$, where the inner product is given by $\inner{w_1,w_2}_{L^2(U,\cA,\mu;X)} := \int_U \inner{w_1(y),w_2(y)}_X \diff\mu(y)$ for $w_1, w_2 \in L^2(U)$. We write $L^p(U,\cA,\mu) := L^p(U,\cA,\mu;\R)$ if $X = \R$. If $U \subseteq \R^d$ with $d \in \N$ is Borel measurable, $\cA$ is the Borel $\sigma$-algebra $\cB(U)$, and $\mu = \lambda_U$ the Lebesgue measure on $U$, we simply write $L^p(U;X) := L^p(U,\cB(U),\lambda_U;X)$ and if additionally $X = \R$, we write $L^p(U) := L^p(U;\R)$.  For $v \in L^1(U)$,  we write 
$$
\A( v ):= \frac{1}{\abs{U}} \int_U v(y) \, \diff y, 
$$ 
for its average value.

For $k \in \N$, $1 \le p \le \infty$, and $U \subset \R^d$ with $\D U \in C^\infty$, we write $W^{k,p}(U;X)$ for the Sobolev space of all $u \in L^p(U;X)$ such that $\D^\alpha u \in L^p(U;X)$ for all $\alpha \in \N_0^d$ with $\abs{\alpha} \le k$, where the norm is given by
\[
\norm{u}_{W^{k,p}(U;X)} := \sum_{\abs{\alpha} \le k} \norm{\D^\alpha u}_{L^p(U;X)}.
\]
For $s \in (0,1)$ and $u \colon U \to X$ measurable, we define
\[
[u]_{W^{s,p}(U;X)} := \left(\int_U \int_U \frac{\norm{u(x) - u(y)}^p}{\abs{x-y}^{s p}} \, \diff x \, \diff y\right)^{\frac 1 p}.
\]
For $s \in (0,\infty)$, we define the Sobolev-Slobodeckij space $W^{s,p}(U;X)$ as the space of all $u \in W^{\lfloor s \rfloor,p}(U;X)$ such thar $[\D^\alpha u]_{W^{s-\lfloor s \rfloor,p}(U;X)} < \infty$ for all $\alpha \in \N_0^d$ with $\abs{\alpha} = \lfloor s \rfloor$, where the norm is given by $\norm{u}_{W^{s,p}(U;X)} := \norm{u}_{W^{\lfloor s \rfloor,p}(U;X)} + \sum_{\abs{\alpha} = \lfloor s \rfloor} [\D^\alpha u]_{W^{s-\lfloor s \rfloor,p}(U;X)}$.

For $k \in \N_0$ we define the periodic Sobolev space $H^k(\bT)$ as the closure of all smooth $v \colon \bT \to \R$ such that the norm $\norm{v}_{H^k(\bT)}$ is finite, where $\norm{v}_{H^k(\bT)} := \sqrt{\inner{v,v}_{H^k(\bT)}}$ and the inner product is defined as $
\inner{w_1,w_2}_{H^k(\bT)} := \sum_{j = 0}^k \inner{\D_x^j w_1, \D_x^j w_2}_{L^2(\bT)}$ for all smooth $w_1, w_2 \colon \bT \to \R$. We define $H^{-k}(\bT) := \left(H^k(\bT)\right)'$ as the dual of $H^k(\bT)$ relative to $L^2(\bT)$. 

For $s \in \R \setminus \Z$ we introduce the fractional Sobolev space $H^s(\bT)$ as the closure of all smooth $v \colon \bT \to \R$ such that the norm $\norm{v}_{H^s(\bT)}$ is finite, where $\norm{v}_{H^s(\bT)} := \sqrt{\inner{v,v}_{H^s(\bT)}}$ and the inner product is defined as $\inner{w_1,w_2}_{H^s(\bT)} := \sum_{k \in \Z} \left(1+\lambda_k^s\right) \hat w_1(k) \, \hat w_2(k)$ for all smooth $w_1, w_2 \colon \bT \to \R$, where $\hat w_j(k) := \frac{1}{\sqrt L} \int_\bT e_k(x) \, w_j(x) \, \diff x$ is the discrete Fourier transform with respect to the family $(e_k)_{k \in \Z}$ defined in \eqref{basis-e-k} below and $\lambda_k \stackrel{\eqref{def-lam-k}}{=} \frac{4 \pi^2 k^2}{L^2}$.

For $s \in \R$ we write $H^s_\mathrm{w}(\bT)$ for the space $H^s(\bT)$ endowed with the weak topology.

%%%%%%%%%%%%%%%%%%%%%
%%%%%%%%%%%%%%%%%%%%%
\subsection{Setting\label{sec:setting}}
%%%%%%%%%%%%%%%%%%%%%
%%%%%%%%%%%%%%%%%%%%%
Suppose we are given a stochastic basis $\left(\Omega,\cF,\F,\P\right)$, that is, the triple $\left(\Omega,\cF,\P\right)$ is a complete  probability space and $\F=( \mathcal{F}_t)_{t \in [0,T]}$ is a filtration satisfying the usual conditions. Further suppose that  independent real-valued standard $\F$-Wiener processes $\left(\beta_k\right)_{k \in \Z}$ are given. For what follows, we write
\begin{equation}\label{def-m-f-0}
M(r) := F_0^2(r), \quad \mbox{where} \quad F_0(r) := \abs{r}^{\frac n 2} \quad \mbox{for} \quad r \in \R,
\end{equation}
and $n \ge 1$ is a fixed real constant called mobility exponent. Further assume that $\sigma := \left(\sigma_k\right)_{k \in \N}$ is an orthogonal family of eigenfunctions for the negative one-dimensional Laplacian $- \Delta = - \D_x^2$ on $\bT$ (i.e., periodic boundary conditions are employed). Specifically, we introduce the orthonormal basis $(e_k)_{k=-\infty}^\infty$ of $L^2(\bT)$ with
\begin{subequations}\label{ass-sigma-k}
\begin{equation}\label{basis-e-k}
e_k(x) := \sqrt{\frac{2}{L}} \begin{cases} \cos\left(\frac{2 \pi k x}{L}\right) & \mbox{ for } k \ge 1 \mbox{ and } x \in \bT, \\
\frac{1}{\sqrt 2} & \mbox{ for } k = 0 \mbox{ and } x \in \bT, \\ \sin\left(\frac{2 \pi k x}{L}\right) & \mbox{ for } k \le - 1 \mbox{ and } x \in \bT,
\end{cases}
\end{equation}
so that in particular
\begin{equation}\label{ev_lap}
\partial_x e_k = \underbrace{\frac{2 \pi k}{L}}_{= \mathrm{sign}(k) \sqrt{\lambda_k}} e_{-k} \quad \mbox{and} \quad - \partial_x^2 e_k = \underbrace{\frac{4 \pi^2 k^2}{L^2}}_{= \lambda_k} e_k \quad \mbox{for} \quad k \in \Z,
\end{equation}
where
\begin{equation}\label{def-lam-k}
\lambda_k := \frac{4 \pi^2 k^2}{L^2}.
\end{equation}
We then write
\begin{equation}\label{def-sigma-k}
\sigma_k =: \nu_k e_k \quad \mbox{with $\nu_k \in \R$}
\end{equation}
and assume
\begin{equation}\label{eq:reg-sigma-k}
\sum_{k\in\Z} \lambda_k^2 \nu_k^2 < \infty.
\end{equation}
Notice that because of \eqref{ev_lap}, this implies that
\begin{equation}\label{eq:winf-sigma-k}
\sum_{k \in \mathbb{Z}} \| \sigma_k \|_{W^{2,\infty}(\bT)}^2< \infty.
\end{equation}
\end{subequations}
Now, we introduce the $H^2(\bT)$-valued Wiener process
\begin{equation}\label{eq:def-wiener}
W(t,x) := \sum_{k \in \Z} \sigma_k(x) \beta^k(t) \quad \mbox{for} \quad (t,x) \in [0,T] \times \bT.
\end{equation}
The stochastic partial differential equation (SPDE) \eqref{stfe-general} thus attains the form
\[
\diff u =  \D_x \left( - F^2_0 (u) \D^3_x u\right) \diff t + \sum_{k \in \Z} \D_x \left( \sigma_k F_0 (u) \right) \circ \diff \beta^k \quad \mbox{in $[0,T] \times \bT$.}
\]
It is more convenient for the subsequent analysis to rewrite this equation using It\^o calculus, leading to a stochastic correction of the drift (in the physics literature sometimes referred to as the \emph{spurious drift}), that is,
\begin{equation} \label{eq:exact-equation}
\diff u =  \left[\D_x \left( - F^2_0 (u) \D^3_x u\right)  + \frac{1}{2} \sum_{k \in \Z} \D_x \left( \sigma_k F_0'(u) \D_x \left( \sigma_k F_0 (u) \right)\right)\right] \diff t + \sum_{k \in \Z} \D_x \left( \sigma_k F_0 (u) \right) \diff \beta^k
\end{equation}
in $[0,T] \times \bT$.

%%%%%%%%%%%%%%%%%%%%%
%%%%%%%%%%%%%%%%%%%%%
\subsection{Main Result and Discussion}
%%%%%%%%%%%%%%%%%%%%%
%%%%%%%%%%%%%%%%%%%%%
We have the following notion of weak (or martingale) solutions to \eqref{eq:exact-equation}:

\begin{definition}\label{def:exact-solution}
A weak (or martingale) solution to \eqref{eq:exact-equation} for $\mathcal{F}_0$-measurable initial data $u^{(0)} \in L^2(\Omega;  H^1(\bT;\R^+_0)) $ is a quadruple
\[ \left\{ ( \tilde \Omega,\tilde \cF,\tilde \F,\tilde \P), \ (\tilde \beta_k)_{k \in \Z},\ \tilde u^{(0)},  \tilde u \right\}
\]
such that $(\tilde \Omega,\tilde \cF,\tilde \F,\tilde \P)$ is a filtered probability space satisfying the usual conditions,  $\tilde u^{(0)}$ is $\tilde{\mathcal{F}}_0$-measurable and has the same distribution as  $u^{(0)}$,   $(\tilde \beta_k)_{k \in \Z}$ are  independent real-valued standard $\tilde \F$-Wiener processes, and $\tilde{u}$ is an $\tilde \F$-adapted continuous $H^1_\mathrm{w}(\bT)$-valued process, such that
\begin{enumerate}[(i)]
\item \label{it:exact-solution-1}
 $ \tilde \E \sup_{t \leq T}  \| \tilde{u}(t) \|^2_{H^1(\bT)}< \infty $

\item \label{it:exact-solution-2} For almost all $(\tilde{\omega}, t) \in \tilde\Omega \times [0,T]$, the weak derivative of third order $\D^3_x\tilde{u}$ exists on $\{ \tilde{u}(t) \neq 0\}$ and satisfies 
$ \tilde \E \| \ind_{\{\tilde{u}\neq 0\}} F_0(\tilde u) \, \D_x^3 \tilde u \|_{L^2(Q_T)}^2< \infty $,

\item \label{it:exact-solution-3} For all $\varphi \in C^\infty(\bT)$, $\diff\tilde\P$-almost surely, we have  
\begin{eqnarray}
\inner{\tilde u(t),\varphi}_{L^2(\bT)} &=& \inner{\tilde u^{(0)},\varphi}_{L^2(\bT)} + \int_0^t \int_{\left\{\tilde u(t') > 0\right\}} F_0^2(\tilde u(t')) \left(\D_x^3 \tilde u(t')\right) \left(\D_x \varphi\right) \diff x \, \diff t' \nonumber \\
&& - \frac 1 2 \sum_{k \in \Z} \int_0^t \inner{\sigma_k F_0'(\tilde u(t')) \D_x \left(\sigma_k F_0(\tilde u(t'))\right), \D_x\varphi}_{L^2(\bT)} \diff t' \nonumber \\
&& - \sum_{k \in \Z} \int_0^t \inner{\sigma_k F_0(\tilde u(t')), \D_x \varphi}_{L^2(\bT)} \diff\beta^k(t') \label{eq:exact-weak}
\end{eqnarray}
 for all $t \in [0,T]$. 
\end{enumerate}
\end{definition}

The main result of this paper reads as follows:
\begin{theorem}\label{th:existence}
     Let $T \in (0,\infty)$, $n \in \left[\frac 8 3,4\right)$,  $p> n+2$, $q > 1$ satisfying $q \ge \max\left\{\frac{1}{4-n},\frac{n-2}{2n-5}\right\}$. Suppose that 
\[
u^{(0)} \in L^{p}\left(\Omega,\cF_0,\P;H^1(\bT)\right)
\]
such that $u^{(0)} \ge 0$, $\diff\P$-almost surely, $\E \abs{\Aunull}^{2pq} < \infty$, and $\E \norm{G_0\left(u^{(0)}\right)}_{L^1(\bT)}^{pq} < \infty$.
  Then \eqref{eq:exact-equation} admits a weak solution
\[ \left\{ ( \tilde \Omega,\tilde \cF,\tilde \F,\tilde \P), \ (\tilde \beta_k)_{k \in \N},\ \tilde u^{(0)},  \tilde u \right\}
\]
in the sense of Definition~\ref{def:exact-solution} such that $\tilde u \ge 0$, $ \diff \tilde\P \otimes \diff t \otimes  \diff x$-almost everywhere. This solution satisfies the estimate
\begin{align}\label{apriori-main}
  & \tilde\E \Big[\sup_{t \in [0,T]} \norm{\D_x \tilde u(t)}_{L^2(\bT)}^{p} + \sup_{t \in [0,T]} \norm{G_0\left(\tilde u(t)\right)}_{L^1(\bT)}^{pq} + \|\ind_{\{\tilde{u}>0\}}\tilde{u}^{\frac n 2}(\partial_{x}^{3}\tilde{u})\|_{L^2(Q_T)}^{p}
    + \norm{\D_x^2 \tilde u}_{L^2(Q_T)}^{2pq}\Big] \nonumber \\
    & \le C \, \E \left[1 + \abs{\Aunull}^{2pq} + \norm{G_0(u^{(0)})}_{L^1(\bT)}^{pq} + \norm{\D_x u^{(0)}}_{L^2(\bT)}^{p} \right],
  \end{align}
where
$C < \infty$ is a constant depending only on $p,q,\sigma=(\sigma_k)_{k\in\Z}, n,L,$ and $T$.
Moreover, 
\begin{equation}\label{hoelder-main}
\tilde u\in L^{p'}(\tilde \Omega, \tilde\F, \tilde\P; {C^{\frac \gamma 4,\gamma}}(Q_T)),\quad \mbox{for all} \quad \gamma\in\left(0,\tfrac12\right) \quad \mbox{and} \quad p'\in \left[1,\tfrac{2p}{n+2}\right).
\end{equation}
\end{theorem}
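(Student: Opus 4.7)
The plan is to build martingale solutions by a three-level approximation mirroring \S\ref{sec:galerkin}--\S\ref{sec:eps-limit}: a Galerkin truncation at mode $N$, a mollification $F_\eps$ of the mobility that removes the degeneracy at the origin, and a smooth cut-off $\Theta_R$ in $\norm{u}_{\Linfty}$. At the three-parameter level the equation becomes a finite-dimensional SDE with globally bounded, locally Lipschitz coefficients, so existence of a global strong solution $u^{N,\eps,R}$ on $[0,T]$ is standard. Applying It\^o's formula to the energy $\tfrac12\norm{\D_x u^{N,\eps,R}}_{\Ltwo}^2$ and using \eqref{eq:winf-sigma-k} together with the cut-off gives bounds uniform in $N$; a first tightness-and-Skorokhod step then passes $N\to\infty$, yielding a solution $u^{\eps,R}$ at the $(\eps,R)$ level that still satisfies the energy inequality.

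The crux of the argument is to derive estimates uniform in $\eps$ and $R$ matching \eqref{apriori-main}. I would apply It\^o to a regularized entropy $\int_{\bT} G_\eps(u^{\eps,R})\,\diff x$ with $G_\eps''\approx 1/F_\eps^2$; integration by parts turns the leading deterministic term into the dissipation $\norm{\D_x^2 u^{\eps,R}}_{\Ltwo}^2$. The Stratonovich-to-It\^o correction in the drift together with the It\^o quadratic-variation term produce expressions involving $\sigma_k$ and $F_\eps(u^{\eps,R})$ which, after integration by parts and \eqref{eq:winf-sigma-k}, can be absorbed by the second-derivative dissipation precisely in the range $n\in[\tfrac83,4)$ — this is where the lower bound on $n$ enters. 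In parallel, I would apply It\^o to $\norm{\D_x u^{\eps,R}}_{\Ltwo}^p$; here the nonlinear, conservation-law structure of the noise creates a term in the energy balance that \emph{cannot} be closed on its own, because of the possible shock formation emphasised in the introduction. The decisive point is that this term can be bounded by the entropy dissipation $\norm{\D_x^2 u^{\eps,R}}_{\Ltwo}^2$ combined with $\abs{\A(u^{\eps,R})}$, which is conserved by the divergence form of the equation. A Gronwall argument exploiting the higher moments $p$ and $pq$ prescribed in the statement then yields \eqref{apriori-main} with a constant independent of $\eps$ and $R$, and simultaneously removes the cut-off $R$.

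With this bound in hand, tightness of the laws of $(u^{\eps})$ on a path space such as $C([0,T];H^1_\mathrm{w}(\bT))\cap L^2([0,T];H^{2-\delta}(\bT))\cap C^{\alpha_1,\alpha_2}(Q_T)$ follows from the second-derivative bound via Aubin--Lions in space and Kolmogorov-type fractional time regularity obtained by testing the equation against sufficiently smooth functions. Since $H^1_\mathrm{w}(\bT)$ is not Polish I would invoke the Jakubowski--Skorokhod theorem to construct a new stochastic basis $(\tilde\Omega,\tilde\cF,\tilde\F,\tilde\P)$ and a sequence $(\tilde u_\eps)$ converging $\tilde\P$-almost surely to some $\tilde u$ in that topology. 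The deterministic degenerate term is identified by writing $F_\eps^2(\tilde u_\eps)\D_x^3\tilde u_\eps = F_\eps(\tilde u_\eps)\cdot F_\eps(\tilde u_\eps)\D_x^3\tilde u_\eps$, using strong a.e.\ convergence of the first factor combined with weak $L^2(Q_T)$-convergence of the second, and then restricting to $\{\tilde u>0\}$; the Stratonovich correction passes by uniform integrability, and the stochastic integral by the Debussche--Glatt-Holtz--Temam convergence lemma. Non-negativity of $\tilde u$ is automatic because $\tilde\E \|G_0(\tilde u(t))\|_{L^1(\bT)}^{pq}<\infty$ and $G_0\equiv+\infty$ on $(-\infty,0]$.

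Finally, \eqref{hoelder-main} follows by combining the spatial bound $\tilde u\in L^p(\tilde\Omega;L^\infty([0,T];H^1(\bT)))$ — which yields a H\"older exponent in $x$ up to $\tfrac12$ by Morrey — with time regularity deduced from the equation in the parabolic scaling $t\sim x^4$ natural to the fourth-order operator, through a Kolmogorov argument and the BDG inequality; the admissible integrability range $p'<2p/(n+2)$ reflects the exponent $n/2$ appearing in $F_0(\tilde u)=\abs{\tilde u}^{n/2}$ when estimating the stochastic drift. The main obstacle, as the authors stress in the introduction, is the combined energy-entropy estimate uniform in $\eps$ of step two: energy alone cannot be closed because of the conservation-law noise, and the precise algebraic cancellation that lets the entropy dissipation absorb the offending energy term is what fixes the admissible range $n\in[\tfrac83,4)$. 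A secondary difficulty is the identification of the degenerate drift on $\{\tilde u>0\}$, where compactness of $\D_x^3\tilde u_\eps$ itself is unavailable and one must rely on strong convergence of $F_0(\tilde u_\eps)$ together with the positivity restriction.
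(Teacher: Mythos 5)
Your proposal tracks the paper's proof essentially step for step: the three-level approximation (Galerkin $N$, mobility regularization $F_\eps$, cut-off $g_R$ in $\Linfty$), an energy estimate valid at the Galerkin level, passage $N\to\infty$, then the decisive $\eps,R$-uniform combination of entropy (dissipating $\norm{\D_x^2 u}_{\Ltwo}^2$) and energy estimates with conservation of mass and the Poincar\'e inequality, Jakubowski--Skorokhod tightness to pass $\eps\searrow 0$, identification of the degenerate flux via weak--strong coupling and localization on $\{\tilde u > r\}$, and non-negativity forced by $\E\sup_t\norm{G_0(\tilde u(t))}_{L^1(\bT)}^{pq}<\infty$. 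The only genuine variation is the last limit: you invoke the Debussche--Glatt-Holtz--Temam convergence lemma to pass to the limit in the stochastic integral directly, whereas the paper instead characterizes the limit process $M_\varphi$ by verifying the martingale conditions for $M_\varphi$, $M_\varphi^2-\langle M_\varphi\rangle$ and $\tilde\beta^k M_\varphi - \langle\tilde\beta^k, M_\varphi\rangle$ (via \cite[Proposition~A.1]{HOF2}) and then representing it; both are standard and either works here.

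Two small imprecisions to flag, neither a real gap: (i) having the energy-entropy bound uniform in $R$ does not by itself \emph{remove} the cut-off -- the paper still needs one further Jakubowski-type passage $R\to\infty$ (Proposition~\ref{prop:weak-eps}), which you compress into ``simultaneously removes the cut-off''; (ii) $F_\eps$ is not a mollification but an algebraic regularization $(r^2+\eps^2)^{n/4}$ that lifts the degeneracy at $0$; this matters because the explicit structure of $F_\eps$ (and hence of $G_\eps$, $H_\eps$) is what makes Lemmata~\ref{lem:lnF}--\ref{lem:int-energy-2} work, and these pointwise inequalities are where the admissible window $n\in[\tfrac83,4)$ and the exponent $q\ge\max\{\tfrac{1}{4-n},\tfrac{n-2}{2n-5}\}$ actually come from, rather than from a single ``algebraic cancellation'' in the It\^o computation.
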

The proof of Theorem ~\ref{th:existence} is given in Section \ref{sec:proof_main_result} below.

Theorem~\ref{th:existence} is a global existence result for weak solutions to the stochastic thin-film equation \eqref{eq:exact-equation} for a range of mobility exponents, including the cubic one $n = 3$, corresponding to a no-slip condition at the substrate of the underlying stochastic Navier-Stokes equations (see \cite{DMS2005} for details on the modelling and a non-rigorous derivation). Therefore, Theorem~\ref{th:existence} in particular applies to the physically relevant situation  considered in \cite{DMS2005}. We expect that the limitations $n \ge \frac 8 3$ and $n < 4$ are due to technical reasons and that these restrictions can be potentially removed in future work by making use of so-called $\alpha$-entropies as first introduced in \cite{BBDP1995}. Similarly, upgrading Theorem~\ref{th:existence} to cover higher dimensions, as done in \cite{PGG1998,Gruen2004}, would be an interesting direction for future research. Notably, our solutions are nonnegative as in \cite{GessGnann2020} but since $\norm{G_0\left(\tilde u(t)\right)}_{L^1(\bT)}$ is $\diff t \otimes \diff \tilde\P$-almost everywhere finite, by \eqref{eq:entropy-eps-0} it holds $\abs{\left\{\tilde u(t) = 0\right\}} = 0$ for all $t \in [0,T]$, $\diff \tilde\P$-almost everywhere. Since the arguments in \cite{GessGnann2020} are purely energetic, the support of the initial data in \cite{GessGnann2020} is not necessarily $\bT$ and in general this is  not the case for the corresponding solution of the SPDE, either. We expect that it is possible to overcome this constraint also in the situation of this paper by using a renormalization technique, which will be left as an endeavour for future research, too.

%%%%%%%%%%%%%%%%%%%%%
%%%%%%%%%%%%%%%%%%%%%
\section{Galerkin Approximation\label{sec:galerkin}}
%%%%%%%%%%%%%%%%%%%%%
%%%%%%%%%%%%%%%%%%%%%
In this section, we use the definitions and assumptions of \S\ref{sec:setting}.
%%%%%%%%%%%%%%%%%%%%%
%%%%%%%%%%%%%%%%%%%%%
\subsection{Setup}
%%%%%%%%%%%%%%%%%%%%%
%%%%%%%%%%%%%%%%%%%%%
We write $V_N= \text{span} \{e_{-N},...,e_N \}$, where the $(e_j)_{j \in \Z}$ are defined as in \eqref{basis-e-k}, $N \in \N$, and let $\Pi_N : L^2(\bT) \to V_N$ be the orthogonal projection given by
\begin{equation}\label{projection}
\Pi_N v = \sum_{j=-N}^N \inner{v, e_j}_{L^2(\bT)} e_j \quad \mbox{for any} \quad v \in L^2(\bT).
\end{equation} 
It is immediate from \eqref{ev_lap} that $\partial_x^2 \Pi_N = \Pi_N \partial_x^2$. Furthermore, we obtain for any $v \in L^2(\bT)$ through integration by parts and with our specific choice of eigenfunctions,
\begin{eqnarray}
\Pi_N (\partial_x v) &\stackrel{\eqref{projection}}{=}& \sum_{j = -N}^N \inner{\partial_x v, e_j}_{L^2(\bT)} e_j = - \sum_{j = -N}^N \inner{v,\partial_x e_j}_{L^2(\bT)} e_j \nonumber \\
&\stackrel{\eqref{ev_lap}}{=}& \sum_{j = -N}^N \inner{v, e_{-j}}_{L^2(\bT)} \frac{- 2 \pi j}{L} e_j \stackrel{\eqref{ev_lap}}{=} \sum_{j = -N}^N \inner{v, e_{-j}}_{L^2(\bT)} \partial_x e_{-j} \nonumber \\
&=& \partial_x \left(\Pi_N v\right). \label{Pi_N_pr_x}
\end{eqnarray}
Let $g \colon [0,\infty) \to [0,1]$ be a smooth function such that $g = 1$ on $[0,1]$ and $g = 0$ on $[2,\infty)$. Further define $g_R(s) := g(s/R)$ for $s \in [0,\infty)$ and set for $\eps \ge 0$
\begin{equs}\label{eq:def-f-eps}
F_\eps(r):= \left(r^2+\eps^2\right)^{\frac n 4} \quad \mbox{for $r \in \R$},
\end{equs}
where $n > 0$ is constant. Notably, for $\eps = 0$ the definition \eqref{eq:def-f-eps} is consistent with the corresponding expression in \eqref{def-m-f-0}, but we will assume $\eps > 0$ and thus that $F_\eps$ is smooth with $F_\eps(r) \ge \eps^{\frac n 2}$ for all $r \in \R$ until \S\ref{sec:eps-limit}. We consider the Galerkin scheme, i.e., the finite-dimensional stochastic differential equation (SDE)
\begin{eqnarray}\nonumber
\diff u_{\eps,R,N} &=&  \Pi_N \left[ \D_x\left(-F_\eps^2(u_{\eps,R,N}) \D_x^3 u_{\eps,R,N}\right) \right] \diff t \\
&& + \frac 1 2 g_R^2\left(\norm{u_{\eps,R,N}}_{L^\infty(\bT)}\right) \Pi_N \left[\sum_{k \in \Z} \partial_x \left(\sigma_k F_\eps'(u_{\eps,R,N}) \partial_x(\sigma_k F_\eps(u_{\eps,R,N}))\right) \right] \diff t \nonumber \\
&&+ g_R\left(\norm{u_{\eps,R,N}}_{L^\infty(\bT)}\right) \Pi_N \left[\sum_{k \in \Z} \partial_x\left(\sigma_k F_\eps(u_{\eps,R,N})\right) \diff \beta^k\right]. \label{galerkin_sde}
\end{eqnarray} 
 The approximation in \eqref{galerkin_sde} is three-fold. While applying the projection $\Pi_N$ yields a finite-dimensional SDE, additionally the mobility $F_0^2$ is regularized with $F_\eps^2$, so that the limiting equation as $N \to \infty$ is non-degenerate if $\eps > 0$. For technical reasons in what follows, we also cut off the noise with the pre-factor $g_R\left(\norm{u_{\eps,R,N}}_{L^\infty(\bT)}\right)$.

Notice that \eqref{galerkin_sde} is equivalent to the system on $\R^{2N+1}$
\begin{equs}   \label{eq:system}
\diff y= [A_1( y)+ A_2(y) ] \,  \diff t +\sum_{k\in \Z} B^k(y)  \, \diff \beta^k(t),  
\end{equs}
where, with the short-hand notation $v_y(x)= \sum_{j=-N}^N y^j e_j (x)$  for $y \in \R^{2N+1}$, 
\begin{equs}
A_1^i(y)& = \inner{ F^2_\eps (v_y)\sum_{j=-N}^N y^j \D^3_x e_j , \D_x e_i }_{L^2(\bT)},
\\
A_2^i(y)& =-\frac 1 2 g_R^2\left(\| v_y \|_{L^\infty(\bT)}\right)   \inner{  \sum_{k \in \Z} \sigma_k F_\eps'(v_y ) \partial_x(\sigma_k F_\eps(v_y ) , \partial_x e_i }_{L^2(\bT)},
\\
B^k(y)& =-  \inner{g_R\left(\norm{v_y}_{L^\infty(\bT)}\right) \sigma_k F_\eps(v_y), \partial_xe_i} _{L^2(\bT)}.
\end{equs}
Let us consider on $\R^{2N+1}$ the inner product 
\begin{equs}
\inner{ y^1, y^2 }_\lambda : = \sum_{j=_N}^N \lambda_j y^j_1 y^j_2, 
\end{equs}
and denote by $\| \cdot \|_\lambda$ the corresponding norm. By \eqref{ev_lap},  it is easy to see that for all $y \in \R^{2N+1}$ we have 
\begin{equs}
\inner{ A_1(y), y }_\lambda =- \|  F_\eps(v_y) \D^3_x v_y \|_{L^2(\bT)}  \leq 0. 
\end{equs}
In addition, because of the truncation in $R$ and the finite dimensionality, it is easy to see that there exists a constant $C= C(R, N)$ such that for all $y \in \R^{2N+1}$
\begin{equs}
\|A_2(y) \|_\lambda + \sum_{k=1}^ \infty \| B^k(y)\|_\lambda^2 \leq C.
\end{equs}
This shows that the system \eqref{eq:system} is coercive, which combined with the local Lipschitz continuity of the coefficients implies that for any $\mathcal{F}_0$-measurable  random variable in $\R^{2N+1}$, there exists a unique solution of \eqref{eq:system} starting from $y_0$. In particular, \eqref{galerkin_sde} has a unique solution starting from $u_N^{(0)} := \Pi_N u^{(0)}$. 
Finally, notice that with \eqref{Pi_N_pr_x} it follows that \eqref{galerkin_sde} is still in divergence form so that in particular $\A(u_{\eps,R,N}(t)) = \A(u_{\eps,R,N}(0))$ for any $t \in [0,T]$.

%%%%%%%%%%%%%%%%%%%%%
%%%%%%%%%%%%%%%%%%%%%
\subsection{Energy Estimate for the Galerkin Scheme}
%%%%%%%%%%%%%%%%%%%%%
%%%%%%%%%%%%%%%%%%%%%

%
\begin{lemma}           \label{lem:energy-R-dependent}
Suppose $p \in [2,\infty)$, $u^{(0)} \in L^p\left(\Omega,\mathcal{F}_0,\P;H^1(\bT)\right)$, and $n > 0$.  Let  $u_{\eps,R,N}$ be the unique solution to \eqref{galerkin_sde} with initial data $u_N^{(0)}$. Then $u_{\eps,R,N}$ satisfies
\begin{align}
&\E \sup_{t \leq T } \norm{\D_x u_{\eps,R,N}(t)}_{L^2(\bT)}^p + \E \norm{F_\eps (u_{\eps,R,N}) \, \D_x^3 u_{\eps,R,N}}_{L^2(Q_T)}^p \nonumber \\
& \quad \le C \left(1 + \E \norm{\D_x u^{(0)}}^p_{L^2(\bT)}\right),  \label{galerkin_energy}  
\end{align}
where $C < \infty$ is a constant depending only on $\eps$, $R$, $p$, $\sigma = \left(\sigma_k\right)_{k \in \Z}$, $n$, and $T$ (but not on $N$). 
\end{lemma}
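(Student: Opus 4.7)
My plan is to apply It\^o's formula to the energy $\phi(u) := \norm{\D_x u}_{L^2(\bT)}^2$ and close the resulting balance by combining the fourth-order dissipation with the $L^\infty(\bT)$-cutoff $g_R$ and a reverse-Poincar\'e trick. Write $u = u_{\eps,R,N}$ and $Y(t) := \phi(u(t))$. The commutation $\Pi_N \D_x = \D_x \Pi_N$ from \eqref{Pi_N_pr_x}, self-adjointness of $\Pi_N$, and the fact that $u(t), \D_x^j u(t) \in V_N$ reduce the It\^o formula applied to \eqref{galerkin_sde} to
\begin{align*}
dY & = \Big[ -2 \norm{F_\eps(u) \D_x^3 u}_{L^2(\bT)}^2 + g_R^2 \sum_{k \in \Z} \inner{\D_x^3 u,\, \sigma_k F_\eps'(u) \D_x(\sigma_k F_\eps(u))}_{L^2(\bT)} \\
& \qquad {} + g_R^2 \sum_{k \in \Z} \norm{\Pi_N \D_x^2(\sigma_k F_\eps(u))}_{L^2(\bT)}^2 \Big]\, dt + dM_t,
\end{align*}
where $dM_t = 2 g_R \sum_{k \in \Z} \inner{\D_x^3 u,\, \sigma_k F_\eps(u)}_{L^2(\bT)}\, d\beta^k(t)$. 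Already at this stage Cauchy--Schwarz in $x$ together with \eqref{eq:winf-sigma-k} give the clean quadratic variation bound $d\langle M\rangle_t \le C(\sigma) \norm{F_\eps(u) \D_x^3 u}_{L^2(\bT)}^2\, dt$, which will make the martingale contribution absorbable in the final step.

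The technical heart of the proof is to control the two drift corrections. On the cutoff region $\{g_R > 0\} \subseteq \{\norm{u}_{L^\infty(\bT)} \le 2R\}$, the smooth functions $F_\eps(u), F_\eps'(u), F_\eps''(u)$ are uniformly bounded by constants depending only on $R, \eps, n$. Expanding $\D_x^2(\sigma_k F_\eps(u))$ by the Leibniz rule and summing over $k$ using $\sum_k \norm{\sigma_k}_{W^{2,\infty}(\bT)}^2 < \infty$ bounds the sum of the two corrections by $\tfrac18 \norm{F_\eps(u) \D_x^3 u}_{L^2(\bT)}^2 + C(1 + Y + \norm{\D_x u}_{L^4(\bT)}^4 + \norm{\D_x^2 u}_{L^2(\bT)}^2)$. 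The one-dimensional Gagliardo--Nirenberg inequality $\norm{\D_x u}_{L^\infty(\bT)}^2 \le C R^{2/3} \norm{\D_x^2 u}_{L^2(\bT)}^{4/3} + C R^2$ (available \emph{thanks to the cutoff on} $\norm{u}_{L^\infty(\bT)}$), the interpolation $\norm{\D_x^2 u}_{L^2}^2 \le \norm{\D_x u}_{L^2}\norm{\D_x^3 u}_{L^2}$, and $\norm{F_\eps(u) \D_x^3 u}_{L^2}^2 \ge \eps^n \norm{\D_x^3 u}_{L^2}^2$ together reduce this further to $\eta \norm{F_\eps(u) \D_x^3 u}_{L^2(\bT)}^2 + C_\eta(1 + Y^2)$. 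The residual $Y^2$ would break a na\"ive Gr\"onwall; to dispose of it I use a reverse-Poincar\'e step also enabled by the cutoff: mass conservation $\A(u(t)) = \A(u^{(0)})$ combined with $\norm{u}_{L^\infty(\bT)} \le 2R$ gives $\norm{u - \A(u)}_{L^2(\bT)} \le C(R,L)$, whence
\[
Y = -\int_\bT (u - \A(u))\, \D_x^2 u\, dx \le C(R,L)\, \norm{\D_x^2 u}_{L^2(\bT)},
\]
and combining this with $\norm{\D_x^2 u}_{L^2}^2 \le Y^{1/2}\norm{\D_x^3 u}_{L^2}$ yields the cubic-in-$Y$ lower bound $\norm{F_\eps(u) \D_x^3 u}_{L^2(\bT)}^2 \ge c(\eps,R,L)\, Y^3$ on $\{g_R > 0\}$. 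Young's inequality now absorbs $C Y^2$ into a further $\tfrac18 \norm{F_\eps(u) \D_x^3 u}_{L^2(\bT)}^2$, and altogether the two drift corrections are bounded by $\tfrac34 \norm{F_\eps(u) \D_x^3 u}_{L^2(\bT)}^2 + C(R,\eps,\sigma,n,L)$ on $\{g_R > 0\}$ (and vanish elsewhere). Integrating delivers the pathwise energy inequality
\[
Y(t) + \tfrac54 \int_0^t \norm{F_\eps(u) \D_x^3 u}_{L^2(\bT)}^2\, ds \le Y(0) + C T + M_t.
\]

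To conclude I take the supremum over $t \in [0,T]$, raise to the $p/2$ power (valid since $p \ge 2$, so that $x \mapsto x^{p/2}$ is subadditive up to a constant), take expectations, and invoke the Burkholder--Davis--Gundy inequality together with the quadratic variation bound from the first paragraph:
\[
\E \sup_{t \le T} \abs{M_t}^{p/2} \le C_p\, \E \langle M\rangle_T^{p/4} \le C(p,\sigma)\, \E \Bigl( \int_0^T \norm{F_\eps(u) \D_x^3 u}_{L^2(\bT)}^2\, ds \Bigr)^{p/4}.
\]
Young's inequality splits the last quantity into a small multiple of $\E \bigl(\int_0^T \norm{F_\eps(u) \D_x^3 u}_{L^2(\bT)}^2\, ds\bigr)^{p/2}$ plus a constant, which is absorbed into the left-hand side to yield \eqref{galerkin_energy} with an $N$-independent constant. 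I anticipate the reverse-Poincar\'e step to be the most delicate ingredient: the Stratonovich-to-It\^o correction and the It\^o trace do not appear to admit any obvious cancellation in the projected setting, so the super-linear remainder $Y^2$ genuinely arises and can only be tamed by upgrading the dissipation from linear to cubic growth via the $L^\infty$-cutoff.
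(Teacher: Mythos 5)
Your proof is correct and reaches the same estimate, but the way you close the energy balance is genuinely different from the paper's. The paper handles the quartic term $\int_\bT (\D_x u)^4\,\diff x$ by two integrations by parts (valid on $\{\gamma_u>0\}$, where $\abs{u}\le 2R$), converting it into $C_R\int_\bT\abs{\D_x u}\abs{\D_x^3 u}\,\diff x$, which Young's inequality then splits into a small multiple of $\eps^n\norm{\D_x^3 u}_{L^2}^2$ and a residual that is \emph{linear} in $Y=\norm{\D_x u}_{L^2}^2$; this gives a clean linear Gr\"onwall with no superlinear remainder at all. You instead invoke Gagliardo--Nirenberg on $\{g_R>0\}$, land on a $Y^2$ remainder, and then observe -- this is the novel step -- that mass conservation plus the $L^\infty$-cutoff yields the reverse-Poincar\'e chain $Y\le C(R,L)\norm{\D_x^2 u}_{L^2}$ and hence the cubic lower bound $\norm{F_\eps(u)\D_x^3 u}_{L^2}^2\ge c(\eps,R,L)\,Y^3$, which absorbs $Y^2$ via Young. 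Both routes work; the paper's integration-by-parts trick is shorter and never produces a superlinear residual, while your cubic-dissipation argument is somewhat more robust in that it does not require the specific algebraic rearrangement of the quartic term (though it does rely more heavily on the cutoff). Your closing assertion that a $Y^2$ remainder ``genuinely arises and can only be tamed by upgrading the dissipation'' is therefore not accurate -- the paper shows the remainder can be kept linear from the outset -- but your absorption mechanism is nonetheless valid. One small technical omission: before applying the Burkholder--Davis--Gundy inequality you should localize with a stopping time $\tau_m$ (as the paper does) so that $\E\langle M\rangle_{T\wedge\tau_m}^{p/4}$ is known to be finite, and then pass to the limit with Fatou; in your write-up the finiteness needed to justify BDG and the subsequent absorption is implicitly assumed.
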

\begin{proof}
For convenience, we drop the dependence on $\eps$, $R$, and $N$ in the notation and simply write $u$ and $\gamma_u(t) := g_R\left(\norm{u(t)}_{L^\infty(\bT)}\right)$. Applying It\^o's formula to \eqref{galerkin_sde}, we have, $\diff\P$-almost surely,
\begin{eqnarray*}
\lefteqn{\frac 1 2 \norm{\partial_x u(t)}_{L^2(\bT)}^2 - \frac 1 2 \norm{\partial_x u(0)}_{L^2(\bT)}^2} \\
&=& - \int_0^t \inner{\partial_x u(t'), \partial_x \left(\Pi_N \partial_x \left(F_\eps^2(u(t')) \partial_x^3 u(t')\right)\right)}_{L^2(\bT)} \diff t' \\
&& + \frac 1 2 \sum_{k \in \Z} \int_0^t {\gamma_u^2}(t') \inner{\partial_x u(t'), \partial_x \left(\Pi_N \partial_x \left(\sigma_k F_\eps'(u(t')) \partial_x \left(\sigma_k F_\eps(u(t'))\right)\right)\right)}_{L^2(\bT)} \diff t' \\
&& + \frac 1 2 \sum_{k \in \Z} \int_0^t {\gamma_u^2}(t') \norm{\partial_x \left(\Pi_N \partial_x \left(\sigma_k F_\eps(u(t'))\right)\right)}_{L^2(\bT)}^2 \diff t' \\
&& + \sum_{k \in \Z} \int_0^t \gamma_u(t') \inner{\partial_x u(t'), \partial_x \left(\Pi_N \partial_x \left(\sigma_k F_\eps(u(t'))\right)\right)}_{L^2(\bT)} \diff \beta^k(t')
\end{eqnarray*}
for all $t \in [0,T]$. 
Since $\Pi_N$ is an orthogonal projection, it furthermore holds $\inner{v, \Pi_N w}_{L^2(\bT)} = \inner{\Pi_N v, w}_{L^2(\bT)}$ for any $v, w \in L^2(\bT)$. Since $\Pi_N u = u$ and $\norm{\Pi_N v}_{L^2(\bT)} \le \norm{v}_{L^2(\bT)}$ for any $v \in L^2(\bT)$, we obtain with the help of \eqref{Pi_N_pr_x} the simplification
\begin{eqnarray*}
\lefteqn{\frac 1 2 \norm{\partial_x u(t)}_{L^2(\bT)}^2 - \frac 1 2 \norm{\partial_x u^{(0)}}_{L^2(\bT)}^2} \\
&\le& - \int_0^t \inner{\partial_x u(t'), \partial_x^2 \left(F_\eps^2(u(t')) \, \partial_x^3 u(t')\right)}_{L^2(\bT)} \diff t' \\
&& + \frac 1 2 \sum_{k \in \Z} \int_0^t {\gamma_u^2}(t') \inner{\partial_x u(t'), \partial_x^2 \left(\sigma_k F_\eps'(u(t')) \, \partial_x \left(\sigma_k F_\eps(u(t'))\right)\right)}_{L^2(\bT)} \diff t' \\
&& + \frac 1 2 \sum_{k \in \Z} \int_0^t {\gamma_u^2}(t') \norm{\partial_x^2 \left(\sigma_k F_\eps(u(t'))\right)}_{L^2(\bT)}^2 \diff t' \\
&& + \sum_{k \in \Z} \int_0^t \gamma_u(t') \inner{\partial_x u(t'), \partial_x^2 \left(\sigma_k F_\eps(u(t'))\right)}_{L^2(\bT)} \diff \beta^k(t').
\end{eqnarray*}
Integration by parts gives for the terms to the right of the inequality

\begin{eqnarray*}
\lefteqn{\inner{\partial_x u, \partial_x^2 \left(F_\eps^2(u) \, \partial_x^3 u\right)}_{L^2(\bT)} = \norm{F_\eps(u) \, \D_x^3 u}_{L^2(\bT)}^2,} \\
\lefteqn{\frac 1 2 \int_{\bT} (\partial_x u) \partial_x^2 \left(\sigma_k F_\eps'(u) \partial_x (\sigma_k F_\eps(u))\right) \diff x} \\
&=& \int_{\bT} \sigma_k^2 \left(- \tfrac 1 2 (F_\eps')^2(u) \, (\partial_x^2 u)^2 + \tfrac 1 6 \left((F_\eps')^2\right)''(u) \, (\partial_x u)^4\right) \diff x \\
&& + \int_{\bT} \left(\partial_x(\sigma_k^2)\right) \left(\tfrac{1}{16} (F_\eps^2)'''(u) + \tfrac{5}{12} \left((F_\eps')^2\right)'(u)\right) (\partial_x u)^3 \, \diff x \\
&& + \int_{\bT} \left(\partial_x^2 (\sigma_k^2)\right) \left(\tfrac 1 4 (F_\eps')^2(u) + \tfrac{3}{16} (F_\eps^2)''(u)\right) (\partial_x u)^2 \, \diff x \\
&& - \frac 1 8 \int_{\bT} \left(\partial_x^4 (\sigma_k^2)\right) F_\eps^2(u) \, \diff x, \\
\lefteqn{\frac 1 2 \int_{\bT} \left(\partial_x^2(\sigma_k F_\eps(u))\right)^2 \diff x} \\
&=& \int_{\bT} \sigma_k^2 \left(\tfrac 1 2 \, (F_\eps')^2(u) \, (\partial_x^2 u)^2 + \left(\tfrac 1 2 (F_\eps'')^2(u) - \tfrac 1 6 \left((F_\eps')^2\right)''(u)\right) (\partial_x u)^4\right) \diff x \\
&& - \frac 1 6 \int_{\bT} \left(\partial_x (\sigma_k^2)\right) \left((F_\eps')^2\right)'(u) \, (\partial_x u)^3 \, \diff x \\
&& + \int_{\bT} \left((\partial_x \sigma_k)^2 - 2 \sigma_k \, (\partial_x^2 \sigma_k)\right) (F_\eps')^2(u) \, (\partial_x u)^2 \, \diff x \\
&& + \frac 1 2 \int_{\bT} \sigma_k \, (\partial_x^4 \sigma_k) \, F_\eps^2(u) \, \diff x, \\
\lefteqn{\int_{\bT} (\partial_x u) \partial_x^2\left(\sigma_k F_\eps(u)\right) \diff x = \int_{\bT} \sigma_k \, F_\eps(u) \, \partial_x^3 u \, \diff x,}
\end{eqnarray*}
so that we can infer
\begin{eqnarray*}
\lefteqn{\frac 1 2 \norm{\partial_x u(t)}_{L^2(\bT)}^2 - \frac 1 2 \norm{\partial_x u(0)}_{L^2(\bT)}^2} \\
&\le& - \norm{F_\eps(u) \, \partial_x^3 u}_{L^2(Q_t)}^2 \\
&& +\frac 1 6 \sum_{k \in \Z} \int_0^t {\gamma_u^2} \int_{\bT} \sigma_k^2 \, (F_\eps'')^2(u) \, (\partial_x u)^4 \, \diff x \, \diff t' \\
&& + \frac{1}{16} \sum_{k \in \Z} \int_0^t {\gamma_u^2} \int_{\bT} \left(\partial_x (\sigma_k^2)\right) \left((F_\eps^2)'''(u) + 4 \left((F_\eps')^2\right)'(u)\right) (\partial_x u)^3 \, \diff x \, \diff t' \\
&& + \frac{3}{16} \sum_{k \in \Z} \int_0^t {\gamma_u^2} \int_{\bT} \left(8 \left((\partial_x \sigma_k)^2 - \sigma_k (\partial_x^2 \sigma_k)\right) (F_\eps')^2(u) + \left(\partial_x^2 (\sigma^2)\right) (F_\eps^2)''(u)\right) (\partial_x u)^2 \, \diff x \, \diff t' \\
&& + \frac 1 8 \sum_{k \in \Z} \int_0^t {\gamma_u^2} \int_{\bT} \left(4 \sigma_k \, \partial_x^4 \sigma_k - \partial_x^4 (\sigma_k^2)\right) F_\eps^2(u) \, \diff x \, \diff t' \\
&& + \sum_{k \in \Z} \int_0^t \gamma_u \int_{\bT} \sigma_k \, F_\eps(u) \, \partial_x^3 u \, \diff x \, \diff \beta^k(t').
\end{eqnarray*}
For $j, \ell \in \N_0$ with $j + \ell \le 4$ we have
\[
\sum_{k \in \Z} \norm{(\D_x^j \sigma_k) (\D_x^\ell \sigma_k)}_{L^\infty(\bT)} \stackrel{\eqref{basis-e-k}, \eqref{ev_lap}, \eqref{def-sigma-k}}{\le} \frac 2 L \sum_{k \in \Z} \lambda_k^{\frac{j+\ell}{2}} \nu_k^2 \le \frac 2 L \sum_{k \in \Z} \left(1+\lambda_k^2\right) \nu_k^2 \stackrel{\eqref{def-lam-k}, \eqref{eq:reg-sigma-k}}{<} \infty.
\]
This and our control of $\norm{u}_{L^\infty(\bT)}$ via the cut-off function $\gamma_u$ imply together with \eqref{eq:def-f-eps} that
\begin{eqnarray*}
\lefteqn{\frac 1 2 \norm{\partial_x u(t)}_{L^2(\bT)}^2 - \frac 1 2 \norm{\partial_x u^{(0)}}_{L^2(\bT)}^2} \\
&\le& - \norm{F_\eps(u) \, \partial_x^3 u}_{L^2(Q_t)}^2 + C_{\eps,R,\sigma,n} \left(1 + \int_0^t {\gamma_u^2} \int_{\bT} \left((\partial_x u)^4 + \abs{\partial_x u}^3 + (\partial_x u)^2\right) \diff x \, \diff t'\right) \\
&& + \sum_{k \in \Z} \int_0^t \gamma_u \int_{\bT} \sigma_k \, F_\eps(u) \, \partial_x^3 u \, \diff x \, \diff \beta^k(t').
\end{eqnarray*}
Now, note that $\abs{\D_x u}^3 \le \frac 1 2 (\D_x u)^4 + \frac 1 2 (\D_x u)^2$. Furthermore, if $\gamma_u>0$, then we have through integration by parts
\begin{eqnarray*}
\int_{\bT} (\D_xu )^4 \,  \diff x &=& - 3  \int_{\bT} u ( \D_x u)^2 \D^2_x u \, \diff x = - \frac 3 2 \int_{\bT} (\D_x u^2) \, (\D_x u) \, \D_x^2 u \, \diff x
\\
&=& \frac 3 2  \int_{\bT} u^2 \, ( \D_x^2 u)^2 \, \diff x 
+ \frac 3 2 \int_{\bT} u^2 \, (\D_x u) \, \D_x^3 u \, \diff x \\
&\le& C_R \left(\int_{\bT} (\D_x^2 u)^2 \, \diff x + \int_{\bT} \abs{\D_x u} \abs{\D_x^3 u} \diff x\right) \le C_R \int_{\bT} \abs{\D_x u} \abs{\D_x^3 u} \diff x.
\end{eqnarray*}
Consequently, by Young's inequality we have
\begin{equs}
C_{\eps,R,\sigma,n} \, \gamma_u \int_{\bT} (\D_xu )^4 \,  \diff x \leq \frac{\eps^n}{2} \,  \norm{\D^3_x u}_{L^2(\bT)}^2 + C_{\eps,R,\sigma,n} \, \gamma_u \norm{\D_x u}_{L^2(\bT)}^2,
\end{equs}
so that
\begin{eqnarray*}
\lefteqn{\frac 1 2 \norm{\partial_x u(t)}_{L^2(\bT)}^2 + \frac 1 4 \int_0^t \int_{\bT} F_\eps^2(u) \, (\partial_x^3 u)^2 \, \diff x \, \diff t'} \\
&\le& \frac 1 2 \norm{\partial_x u^{(0)}}_{L^2(\bT)}^2 + C_{\eps,R,\sigma,n} + \sum_{k \in \Z} \int_0^t \gamma_u \int_{\bT} \sigma_k \, F_\eps(u) \, \partial_x^3 u \, \diff x \, \diff \beta^k(t') \\
&& + C_{\eps,R,\sigma,n} \int_0^t \norm{\partial_x u(t')}^2 \diff t'.
\end{eqnarray*}
Let us set 
$$
\tau_m = \inf \{ t \geq 0 : \| \D_x u(t) \|^2_{L^2(\bT)} + \int_0^t \int_{\bT}  F_\eps^2(u) \, (\partial_x^3 u)^2 \, \diff x \, \diff t' >m   \}\wedge T .
$$
By replacing $t$ with $t \wedge \tau_m$ in the above inequality, raising to the power $\frac p 2$, taking expectations, and using Gr\"onwall's lemma, we conclude that 
\begin{equs}
\lefteqn{\E \sup_{t \in [0,\tau_m]} \norm{\partial_x u(t)}_{L^2(\bT)}^p + \E \norm{ \ind_{[0, \tau_m]} F_\eps(u) \, \partial_x^3 u}_{L^2(Q_T)}^p} \\
&\le& C_{\eps,R,p,\sigma,n,T} \left(1 + \E \norm{\partial_x u^{(0)}}_{L^2(\bT)}^p + \E \sup_{t \in [0,T]} \abs{\sum_{k \in \Z} \int_0^{t\wedge \tau_m} \gamma_u \int_{\bT} \sigma_k \, F_\eps(u) \, \partial_x^3 u \, \diff x \, \diff \beta^k(t')}^{\frac p 2}\right),    
\\
   \label{eq:en-gale-bef-fatou}
\end{equs}
The Burkholder-Davis-Gundy inequality and the Cauchy-Schwarz inequality imply
\begin{eqnarray*}
\lefteqn{C_{\eps,R,p,\sigma,n,T} \, \E \sup_{t \in [0,T]} \abs{\sum_{k \in \Z} \int_0^{t\wedge \tau_m} \gamma_u \int_{\bT} \sigma_k \, F_\eps(u) \, \partial_x^3 u \, \diff x \, \diff \beta^k}^{\frac p 2}} \\
&\le& C_{\eps,R,p,\sigma,n,T} \, \E \left(\sum_{k \in \Z} \int_0^{\tau_m} \gamma_u^2 \left(\int_{\bT} \sigma_k \, F_\eps(u) \, \partial_x^3 u \, \diff x\right)^2 \diff t'\right)^{\frac p 4} \\
&\stackrel{\eqref{eq:winf-sigma-k}}{\le}& C_{\eps,R,p,\sigma,n,T} \, \E \left(\int_0^{\tau_m} \gamma_u^2 \int_{\bT} F_\eps^2(u) \, (\partial_x^3 u)^2 \, \diff x \, \diff t'\right)^{\frac p 4} \\
&\le& C_{\eps,R,p,\sigma,n,T} + \frac 1 2 \, \E \norm{ \ind_{[0, \tau_m]} F_\eps(u) \, \partial_x^3 u}_{L^2(Q_T)}^p,
\end{eqnarray*}
which shows that the last term at the right hand side of \eqref{eq:en-gale-bef-fatou} can be dropped. The claim then follows by letting $m \to \infty$ and using Fatou's lemma. 
\end{proof}
%

%%%%%%%%%%%%%%%%%%%%%
%%%%%%%%%%%%%%%%%%%%%
\subsection{Passage to the Limit in the Galerkin Scheme}
%%%%%%%%%%%%%%%%%%%%%
%%%%%%%%%%%%%%%%%%%%%
Let us consider the equation
\begin{subequations}\label{eq:problem-eps-R-dependent}
\begin{eqnarray}
\diff u_{\eps,R} &=& \D_x\left(-F_\eps^2(u_{\eps,R}) \D_x^3u_{\eps,R}\right) \diff t \nonumber \\
&& + \frac 1 2 \sum_{k \in \Z} {g_R^2}\left(\norm{u_{\eps,R}}_{L^\infty(\bT)}\right)\partial_x \left(\sigma_k  F_\eps'(u_{\eps,R}) \partial_x(\sigma_k  F_\eps(u_{\eps,R}))\right) \diff t \nonumber \\           
&& + \sum_{k \in \Z} g_R\left(\norm{u_{\eps,R}}_{L^\infty(\bT)}\right) \left(\partial_x\left(\sigma_k  F_\eps(u_{\eps,R})\right)\right) \diff \beta^k,
\label{eq:equation-eps-R-dependent}\\
u_{\eps,R}(0,\cdot) &=& u^{(0)}.
\end{eqnarray}
\end{subequations}

\begin{definition}\label{def:weak-sol-eps-R}
Let $R\in(0,\infty]$. A weak (or martingale) solution to \eqref{eq:problem-eps-R-dependent}  is a quadruple
$$ \left\{ ( \hat \Omega, \hat\cF,\hat\F,\hat \P),  (\hat\beta_k)_{k \in \Z}, \hat u^{(0)},  \hat u_{\eps,R} \right\}
$$
such that $( \hat \Omega, \hat\cF,\hat\F,\hat \P)$ is a filtered probability space satisfying the usual conditions,  $\hat u^{(0)}$ is $\hat{\mathcal{F}}_0$-measurable and has the same distribution as  $u^{(0)}$,   $(\hat \beta_k)_{k \in \Z}$ are  independent real-valued standard $\hat \F$-Wiener processes, and $\hat u_{\eps,R}$ is  an $\hat \F$-adapted continuous $H^1(\bT)$-valued process, such that
\begin{enumerate}[(i)]

\item \label{item:integrability}$ \hat \E \| \hat u_{\eps,R}\|^2_{L^\infty(0,T; H^1(\bT))}< \infty  $ and for almost all $(\hat{\omega}, t) \in \hat\Omega \times [0,T]$,  the weak derivative of third order $\D^3_x\hat u_{\eps,R}$ exists  and satisfies 
$ \hat \E \|  F_\eps (\hat u_{\eps,R}) \, \D_x^3 \hat u_{\eps,R} \|^2_{L^2(Q_T)}< \infty $,

\item  \label{item:satisfying-the-equ} For all $\varphi \in C^\infty(\bT)$, $\diff \hat\P$-almost surely, we have  
\begin{align*}
& \inner{\hat u_{\eps,R}(t),\varphi}_{L^2(\bT)} \\
& \quad = \inner{\hat u^{(0)},\varphi}_{L^2(\bT)} + \int_0^t \int_{\left\{\hat u_{\eps,R}(t') > 0\right\}} F_\eps^2(\hat u_{\eps,R}(t')) \left(\D_x^3 \hat u_{\eps,R}(t')\right) \left(\D_x \varphi\right) \diff x \, \diff t' \nonumber \\
& \phantom{\quad =} - \frac 1 2 \sum_{k \in \Z} \int_0^t g_R^2\left(\norm{\hat u_{\eps,R}(t')}_{\Linfty}\right)\inner{\sigma_k F_\eps'(\hat u_{\eps,R}(t')) \D_x \left(\sigma_k F_\eps(\hat u_{\eps,R}(t'))\right), \D_x\varphi}_{L^2(\bT)} \diff t' \nonumber \\
& \phantom{\quad =} - \sum_{k \in \Z} \int_0^t g_R\left(\norm{\hat u_{\eps,R}(t')}_\Linfty\right)\inner{\sigma_k F_\eps(\tilde u_{\eps,R}(t')), \D_x \varphi}_{L^2(\bT)} \diff\beta^k(t')
\end{align*}
for all $t \in [0,T]$. 
\end{enumerate}
\end{definition}
\begin{remark}
\label{rem:conservation-of-mass}
  1. Note that Definition~\ref{def:weak-sol-eps-R}
  covers also the case that the cutoff by $g_R$ is not active -- just by formally setting $R=\infty.$
  \newline
  2. (Mass conservation) 
In the situation of Definition~\ref{def:weak-sol-eps-R} by setting $\varphi = 1$ in \eqref{item:satisfying-the-equ} it follows that
\begin{equs}
\int_{\bT} \hat u_{\eps,R}(t,x) \, \diff x  = \int_{\bT} \hat u^{(0)}(x)  \, \diff x =: L \A({\hat u}^{(0)})  \quad \mbox{for} \quad t \in [0,T], \quad \mbox{$\diff \hat\P$-almost surely.}
\end{equs}
Hence, by Poincar\'e's inequality there exists a constant $C_L < \infty$, only depending on $L$, such that we have
\begin{equation}\label{eq:poincares}
\norm{\hat u_{\eps,R}(t)}_{L^2(\bT)} \le C_L \left(\norm{\partial_x \hat u_{\eps,R}(t)}_{L^2(\bT)} +| \A(\hat u^{(0)})|
\right) \quad \mbox{for} \quad t \in [0,T], \quad \mbox{$\diff \hat\P$-almost surely.}
\end{equation}
\end{remark}
\begin{proposition}\label{prop:weak-eps-r}
For $n \in (0,4]$, $p \ge n+2$, and $u^{(0)} \in L^p\left(\Omega;\cF_0,\P;H^1(\bT)\right)$, problem~\eqref{eq:problem-eps-R-dependent} admits a weak solution in the sense of Definition~\ref{def:weak-sol-eps-R}.
\end{proposition}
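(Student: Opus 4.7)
The approach is to pass $N \to \infty$ in the Galerkin scheme of Lemma~\ref{lem:energy-R-dependent} via a stochastic compactness argument in the Skorokhod--Jakubowski framework. The plan is: (i) complement the spatial bounds of Lemma~\ref{lem:energy-R-dependent} with a uniform fractional-in-time regularity estimate; (ii) deduce tightness of the laws of the Galerkin sequence in an appropriate path space; (iii) apply Jakubowski's version of the Skorokhod representation theorem to realise almost-sure convergence on a new stochastic basis; (iv) pass to the limit in the weak formulation and identify the stochastic integral.

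For step (i), the bound of Lemma~\ref{lem:energy-R-dependent} combined with $F_\eps \geq \eps^{n/2}$ yields a uniform $L^p(\Omega; L^2(0,T; H^3(\bT)))$-bound on $u_{\eps,R,N}$; mass conservation (Remark~\ref{rem:conservation-of-mass}) together with Poincar\'e's inequality upgrades this to a uniform $L^p(\Omega; L^\infty(0,T; H^1(\bT)))$-bound. Decomposing $u_{\eps,R,N}(t) - u_N^{(0)}$ into its drift and stochastic-integral parts, the cut-off $g_R$ combined with the $L^2(0,T;H^3(\bT))$-control and \eqref{eq:winf-sigma-k} bounds the drift uniformly in $L^2(0,T; H^{-3}(\bT))$, while the Burkholder--Davis--Gundy inequality applied to the stochastic integral yields a uniform $L^p(\Omega; W^{\alpha,2}(0,T; L^2(\bT)))$-bound for every $\alpha < \tfrac{1}{2}$.

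For (ii)--(iii), the Aubin--Lions--Simon compactness lemma together with the embedding $H^{1-\delta}(\bT) \hookrightarrow C(\bT)$ (valid for $\delta<\tfrac12$) yields tightness of the laws of $u_{\eps,R,N}$ in $L^2(0,T; H^{3-\delta}(\bT)) \cap C([0,T]; H^{1-\delta}(\bT)) \cap C(Q_T)$, and tightness of the Brownian motions is immediate. Jakubowski's theorem then produces, along a subsequence, new random variables $\hat u_{\eps,R}^{N_k}, \hat u^{(0)}_{N_k}, (\hat\beta^k_{N_k})_k$ on a new stochastic basis $(\hat\Omega,\hat\cF,\hat\F,\hat\P)$, equidistributed with the originals and converging $\hat\P$-almost surely to limits $\hat u_{\eps,R}, \hat u^{(0)}, (\hat\beta^k)_k$. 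The uniform moment bounds carry over by Fatou's lemma, giving \eqref{item:integrability}, and a standard argument identifies $(\hat\beta^k)_k$ as independent $\hat\F$-Brownian motions.

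For step (iv), the smoothness of $F_\eps$ for $\eps > 0$ together with the $L^\infty$-cut-off enforced by $g_R$ makes $F_\eps, F_\eps', F_\eps^2$ Lipschitz on the effective range of $\hat u_{\eps,R}^{N_k}$, so the almost-sure $C(Q_T)$-convergence produces strong $L^\infty(Q_T)$-convergence of the composed quantities (and continuity of $g_R$ gives convergence of $g_R(\|\hat u_{\eps,R}^{N_k}(t)\|_{L^\infty(\bT)})$). Combined with the weak $L^2(Q_T)$-convergence of $\D_x^3 \hat u_{\eps,R}^{N_k}$ and with $\Pi_{N_k}\varphi \to \varphi$ in $H^3(\bT)$, this permits passage to the limit in the deterministic drift terms of the weak formulation tested against any $\varphi \in C^\infty(\bT)$. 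The stochastic integral is identified by the standard martingale characterisation: the $\hat\F^{N_k}$-martingale $\hat M^{N_k}(t) := \inner{\hat u_{\eps,R}^{N_k}(t) - \hat u^{(0)}_{N_k}, \varphi}_{L^2(\bT)} - \int_0^t (\text{drift})\,\diff s$ and its quadratic (co)variations against $\hat\beta^k_{N_k}$ converge via Vitali (using the moment bounds) to identify the limit as the desired It\^o integral. The principal obstacle is the passage to the limit in the highest-order drift $\D_x(F_\eps^2(u)\D_x^3 u)$, a product of a strongly (respectively, weakly) convergent factor; this works only thanks to the two regularisations $\eps > 0$ and $g_R$, both of which are removed only later in \S\ref{sec:apriori}--\S\ref{sec:eps-limit} using the finer energy--entropy estimates.
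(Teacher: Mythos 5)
Your proposal follows the paper's strategy (Galerkin bound, Sobolev-in-time estimates, Jakubowski--Skorokhod, martingale identification), but there are several concrete discrepancies worth flagging.

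\textbf{Time regularity and the compactness space.} You state a uniform $L^p(\Omega;W^{\alpha,2}(0,T;L^2(\bT)))$-bound for the stochastic integral with $\alpha<\tfrac12$ and combine it with Aubin--Lions--Simon to claim tightness in $C([0,T];H^{1-\delta}(\bT))\cap C(Q_T)$. This does not work: to obtain compactness into a space of continuous-in-time functions one needs the Sobolev exponent to exceed one over the time integrability, i.e.\ $\alpha q>1$. With $q=2$ and $\alpha<\tfrac12$ you have $\alpha q<1$, which is insufficient. The paper works with $W^{\alpha,p}(0,T;H^{-1}(\bT))$ for $p\geq n+2>2$ and a fixed $\alpha\in(\tfrac1p,\tfrac12)$, so that $\alpha p>1$ and Simon's compactness criterion yields $C([0,T];H^{s}(\bT))$ for $s\in(\tfrac12,1)$, hence $C(Q_T)$. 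The Burkholder--Davis--Gundy inequality alone also does not deliver the fractional Sobolev-in-time bound; the paper uses \cite[Lemma~2.1]{Flandoli} for that step.

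\textbf{Weak $L^2(Q_T)$-convergence of $\D_x^3\hat u_N$.} You invoke ``the weak $L^2(Q_T)$-convergence of $\D_x^3\hat u_{\eps,R}^{N_k}$'' to pass to the limit in the drift, but the tightness space you declare does not supply this for $\diff\hat\P$-almost every $\omega$: the uniform $L^p(\Omega;L^2(0,T;H^3(\bT)))$-moment does not give an $\omega$-wise bound on $\|\D_x^3\hat u_N(\omega)\|_{L^2(Q_T)}$ along the whole sequence. The paper resolves this by including a second copy of the Galerkin process equipped with the \emph{weak} topology of $L^2(0,T;H^3(\bT))$ in the path space $\cZ$, so that Jakubowski's theorem directly produces almost-sure weak $L^2(Q_T)$-convergence of $\D_x^3\hat u_N$. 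This is the one genuine gap your write-up leaves open; it can be closed either by adding that weak-topology component, or by working more carefully with the $L^2(0,T;H^{-\delta})$ convergence of $\D_x^3\hat u_N$ (a consequence of your strong $L^2(0,T;H^{3-\delta})$ convergence) via a duality argument, but as stated the assertion is unjustified.

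\textbf{Two smaller points.} The cut-off $g_R$ truncates only the noise coefficient when $\|u\|_{L^\infty(\bT)}$ is large; it does \emph{not} bound the range of $u$. The $\|u\|_{L^\infty(Q_T)}$-control that makes $F_\eps$, $F_\eps'$, $F_\eps^2$ effectively Lipschitz comes from the energy estimate of Lemma~\ref{lem:energy-R-dependent} combined with mass conservation, Poincar\'e's inequality, and the Sobolev embedding $H^1(\bT)\hookrightarrow C(\bT)$. Finally, Definition~\ref{def:weak-sol-eps-R} requires $\hat u_{\eps,R}$ to be a \emph{continuous} $H^1(\bT)$-valued process; your argument establishes continuity into weaker spaces, and the upgrade to $H^1(\bT)$-continuity requires an additional step (the paper applies \cite[Theorem~3.2]{KR79} first to $\hat u$ and then to $\D_x\hat u$).
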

\begin{proof} Let $(\Omega, \mathcal{F}, \mathbb{F}, \mathbb{P})$ be a filtered probability space carrying a sequence $\left(\beta^k\right)_{k=1}^\infty$ of independent $\mathbb{F}$-Wiener processes and on this probability space let $u_{\eps, R, N}$ be the unique --probabilistically-- strong  solution of \eqref{galerkin_sde}. 
From now on, since $\eps$ and $R$ are fixed, we drop them and we write $u_N$ instead of $u_{\eps, R, N}$ in order to simplify the notation. By Lemma \ref{lem:energy-R-dependent} we have that $u_N$ satisfies the bound
\begin{equs}      
\E \sup_{t \in [0,T]} \norm{\D_x u_N(t)}_{L^2(\bT)}^p + \E \norm{F_\eps (u_N) \D_x^3 u_N}_{L^2(Q_T)}^p \le C, \label{eq:energy-R-eps-dependent}
\end{equs}
where $C < \infty$ is independent of $N$. Let us introduce the notation $\gamma_w(t) := g_R\left(\norm{w(t)}_{L^\infty(\bT)}\right)$, and  let us decompose $u_N$ as 
$u_N= u^{(1)}_N+u^{(2)}_N$, where 
\begin{eqnarray*}
 u^{(1)}_N(t) &:=&  u_N^{(0)}+  \int_0^t  \D_x \left(\Pi_N \left(-F_\eps^2(u_N(t')) \D_x^3 u_N(t')\right) \right) \diff t' \\
&& + \frac 1 2 \sum_{k \in \Z} \int_0^t {\gamma_{\hat u_N}^2(t')} \, \partial_x \left(\Pi_N \left(\sigma_k  F_\eps'(u_N(t')) \partial_x(\sigma_k F_\eps(u_N(t')))\right) \right) \diff t'
\end{eqnarray*}
and
\begin{equs}
u^{(2)}_N(t') := \sum_{k \in \Z} \int_0^t \gamma_{\hat u_N}(t') \, \partial_x \left(\Pi_N \left(\sigma_k F_\eps(u_N(t'))\right)\right) \diff \beta^k(t'),
\end{equs}
(recall that we can interchange the projection operator and the derivative by virtue of \eqref{Pi_N_pr_x}).
Let $\alpha \in \left(0, \frac 1 2\right)$ such that $\alpha > \frac 1 p$. By Sobolev's embedding and Young's inequality, we have 
\begin{eqnarray*}
\lefteqn{\sup_{N \in \N} \E \norm{u^{(1)}_N}^2 _{W^{\alpha,p}\left(0,T;H^{-1}(\bT)\right)}} \\
&\le& C \sup_{N \in \N} \E \norm{u^{(1)}_N}^2 _{W^{1,2}\left(0,T;H^{-1}(\bT)\right)} 
\\
&\le& C \sup_{N \in \N} \left(\E \norm{u_N^{(0)}}_\Ltwo^2 + \E \norm{F^2_\eps(u_N) \D^3_x u_N}_{L^2(Q_T)}^2\right) \\
&& + C \sup_{N \in \N} \sum_{k \in \Z} \E \norm{\sigma_k  F_\eps'(u_N) \partial_x(\sigma_k F_\eps(u_N))}^2_{L^2(Q_T)} 
\\
&\le&  C \left(\norm{u^{(0)}}_{L^2(\bT)}^2 + \sup_{N \in \N} \E \left( \|F_\eps(u_N)\|_{L^\infty(\bT)}^2 \norm{F_\eps(u_N) \D^3_x u_N}_{L^2(Q_T)}^2\right)\right)  \\
&& + C \sup_{N \in \N} \sum_{k \in \Z} \E \norm{\sigma_k  F_\eps'(u_N) \partial_x(\sigma_k F_\eps(u_N))}^2_{L^2(Q_T)} 
\\
&\stackrel{\eqref{eq:def-f-eps}, \eqref{eq:winf-sigma-k}}{\le}& C \left( 1+ \sup_{N \in \N} \E \sup_{t \in [0,T]} \norm{u_N}_{H^1(\bT)}^{n+2}\right) + C \sup_{N \in \N} \E \norm{F_\eps(u_N) \D^3_x u_N}_{L^2(Q_T)}^{n+2} \stackrel{\eqref{eq:energy-R-eps-dependent}}{<} \infty,
\end{eqnarray*}
where we have used $2n-2 \le n+2$. By \cite[Lemma 2.1]{Flandoli} we get
\begin{eqnarray*}
\sup_{N \in \N} \E \norm{u^{(2)}_N}^p_{W^{\alpha, p}\left(0,T;H^{-1}(\bT)\right)} &\le& C \sup_{N \in \N} \int_0^T \E \norm{u_N(t)}_{H^1(\bT)}^p \, \diff t \\
&\le& C \sup_{N \in \N} \sup_{t \in [0,T]} \E \norm{u_N(t)}_{H^1(\bT)}^p \stackrel{\eqref{eq:energy-R-eps-dependent}}{<} \infty.
\end{eqnarray*}
From these two estimates we have that
\begin{equs}\label{eq:finite-expectation}
\sup_{N \in \N} \E \norm{u_N}_{W^{\alpha, p}\left(0,T;H^{-1}(\bT)\right) \cap L^\infty\left(0,T;H^1(\bT)\right)}< \infty.
\end{equs}
Let us set 
\[
\beta(t) := \sum_{k\in\Z} 2^{-\abs{k}} \beta^k(t) \, \mathfrak{e}_k,
\]
where $(\mathfrak{e}_k)_{k \in \Z}$ is the standard orthonormal basis of $\ell^2(\Z)$.
We now fix ${s} \in \left(\frac 1 2, 1\right)$. By \cite[\S8, Corollary 5]{Simon1986} we have  that the embedding
\begin{equs}
W^{\alpha, p}\left(0,T;H^{-1}(\bT)\right) \cap  L^\infty\left(0,T;H^1(\bT)\right) \hookrightarrow C\left([0,T];H^{{s}}(\bT)\right)
\end{equs}
is compact. Combining this with \eqref{eq:energy-R-eps-dependent} and \eqref{eq:finite-expectation}, it follows that
for each $\delta>0$ a compact set $K_\delta \subset \cZ:= C\left([0,T];H^{{s}}(\bT)\right) \times \cY \times \ell^2(\Z)$ exists, where $\cY$ denotes the linear space $L^2\left(0,T;H^3(\bT)\right)$ endowed with the weak topology, such that
\begin{equs}
\sup_{N \in \N} \P\left\{\left(u_N, u_N , \beta \right) \in K_\delta\right\} \ge 1-\delta. 
\end{equs}
By \cite[Theorem~2]{Jakubowski} (Prokhorov's theorem for non-metric spaces), there exist $\mathcal{Z}$-valued random variables $(\hat u_N, \hat\theta_N,  \hat\beta_N )$, $(\hat u,\hat\theta,  \hat\beta)$, for $N \in \N$, on a probability space $(\hat\Omega, \hat\cF, \hat\P)$ such that in $\cZ$,
\begin{equation}\label{eq:convergence-in-Z}
(\hat u_N, \hat\theta_N, \hat\beta_N ) \to (\hat u, \hat\theta, \hat\beta ) \quad \mbox{as} \quad N \to \infty, \quad \mbox{$\diff\hat\P$-almost surely},
\end{equation}
and for each $N \in \mathbb{N}$, as random variables in $\mathcal{Z}$
\begin{equation}\label{eq:distribution}
( \hat u_N, \hat \theta_N, \hat\beta_N) \sim (u_N, u_N, \beta ).
\end{equation}
It follows that
\begin{equs}   \label{eq:identification-J}
\hat\theta_N = \hat u_N \quad \mbox{and} \quad \hat\theta = \hat u.
\end{equs}
We set $\hat u^{(0)} := \hat u(0,\cdot)$. Let $\hat\F = (\hat\cF_t)_{t \in [0,T]}$ be the augmented filtration of
\[
\cG_t:= \sigma\left(\hat u(t'), \hat \beta(t'); t' \le t\right)
\]
and let
\[
\hat\beta^k(t):= 2^{\abs{k}} \big(\hat{\beta}(t),\mathfrak{e}_k\big)_{\ell^2(\Z)}.
\]
It follows that $\hat\beta^k$, $k \in \Z$, are mutually independent, standard, real-valued $\hat\cF_t$-Wiener processes (see, e.g., \cite[Proposition~5.3]{GessGnann2020} or \cite[Proof of Proposition~5.4]{DareiotisGess} or \cite[Lemma~5.7]{FischerGruen2018}). We claim that the probability space $(\hat\Omega, \hat\cF, \hat\F, \P)$ with $\hat \cF := \hat \cF_T$, together with the Wiener processes $(\hat{\beta}_k)_{k\in\Z}$ 
and the process $\hat{u}$ set up a weak solution of \eqref{eq:equation-eps-R-dependent}.

Notice that Definition \ref{def:weak-sol-eps-R} \eqref{item:integrability} is satisfied because of \eqref{eq:energy-R-eps-dependent}, \eqref{eq:convergence-in-Z}, \eqref{eq:distribution}, and Fatou's lemma. Hence, we only have to prove Definition \ref{def:weak-sol-eps-R} \eqref{item:satisfying-the-equ} and the continuity of $\hat u$ as a process with values in in $H^1(\bT)$. Let us set
\begin{equs}
\nonumber
M(\hat u, t) &:= \hat u(t)- \hat u(0,\cdot)- \int_0^t  \left(   \D_x\left(-F_\eps^2(\hat u(t')) \D_x^3 \hat u(t') \right) \right) \diff t' \\
 & + \frac 1 2 \sum_{k \in \Z} \int_0^t {\gamma_{\hat u}^2(t')}  \left(\partial_x \left(\sigma_k  F_\eps'(\hat u(t')) \partial_x(\sigma_k F_\eps(\hat u(t')))\right) \right) \diff t'
\end{equs}
and for $v \in \{ \hat u_N, u_N\}$
\begin{equs}
M_N(v, t) &:= v(t)- v(0)- \int_0^t \Pi_N \left(  \D_x\left(- F_\eps^2 (v(t')) \D_x^3 v(t') \right) \right) \diff t' \\
 & + \frac 1 2 \sum_{k \in \Z} \int_0^t {\gamma_{v}^2(t')} \Pi_N \left(\partial_x \left(\sigma_k  F_\eps'(v(t')) \partial_x(\sigma_k F_\eps(v(t')))\right) \right) \diff t'.
\end{equs}
Fix an arbitrary $l \in \Z$. We will show that for any $\varphi \in H^{-1}(\bT)$, the processes
\begin{eqnarray*}
 M^1(\hat u,  t) &:=& (M(\hat u,  t), \varphi)_{H^{-1}(\bT)},
\\ 
 M^2( \hat u , t) &:=& ( M( \hat u , t), \varphi)^2_{H^{-1}(\bT)} - \sum_{k\in\Z} \int_0^t \gamma_{\hat u}^2(t') \left(\D_x\left( \sigma_k F_\eps(\hat u(t'))\right) , \varphi \right)_{H^{-1}(\bT)}^2 \, \diff t',
\\
 M^{3}( \hat u , t) &:=& \hat \beta^l(t)(  M( \hat u , t), \varphi)_{H^{-1}(\bT)} - \int_0^t  \gamma_{\hat u}(t') \left(\D_x\left( \sigma_l F_\eps(\hat u(t'))\right) , \varphi \right)_{H^{-1}(\bT)} \diff t'
\end{eqnarray*}
are continuous $\hat{\mathcal{F}}_t$-martingales. We first show that they are continuous $\mathcal{G}_t$-martingales. Let us further assume for now that $\varphi \in \bigcup_{N \in \N} V_N$, and for $i=1,2,3$ and $v \in \{ u_N , \hat u_N\}$, let us also define the processes $M^i_N(v, t)$ as $M^i( \hat u, t)$, but with $\hat u$, $M( \hat u, t)$, $\D_x( \sigma_k F_\eps (\hat u))$ replaced by $v$, $M_N(v,t)$, $\Pi_N \D_x( \sigma_k  F_\eps (v))$, respectively. Let us fix $t' < t$ and let $\Phi$ be a bounded continuous function on $C\left([0,t'];H^{-1}(\bT)\right) \times C\left([0, t'] ; \ell^2(\Z)\right)$. We have that
\begin{equs}
\inner{M_N(u_N, t), \varphi}_{H^{-1}(\bT)} \stackrel{\eqref{galerkin_sde}}{=} \sum_{k \in \Z} \int_0^t \gamma_{u_N}(t'') \left(\Pi_N \D_x \left( \sigma_k F_\eps(u_N(t'')) \right) , \varphi \right)_{H^{-1}(\bT)} \, \diff \beta^k(t'').
\end{equs}
It follows that the $M^i_N(u_N, t)$ are continuous $\mathcal{F}_t$-martingales. Hence, 
\[
\E \left[\Phi (u_N|_{[0,t']}, \beta|_{[0,t']} )\big(M_N^i(u_N , t)-M_N^i(u_N, t')\big)\right] = 0,
\]
which combined with \eqref{eq:distribution} gives
\begin{equation}\label{eq:martingale-Ml}
\hat \E \big[\Phi (\hat u_N|_{[0,t']},\hat \beta_N |_{[0,t']} ) \big(M_N^i(\hat u_N, t)-  M_N^i(\hat u_N, t')\big)\big] =0.
\end{equation}
Next, notice that since $\varphi \in V_M$ for some $M$, we have for all $N >M$
\begin{eqnarray*}
\lefteqn{\int_0^t \inner{\Pi_N \D_x\left(-F_\eps^2(\hat u_N(t')) \D_x^3 \hat u_N(t')\right), \varphi}_{H^{-1}(\bT)} \diff t'} \\
&=& \int_0^T\int_{\bT} \ind_{[0,t]}(t') \, F_\eps^2 (\hat u_N(t')) (\D_x^3 \hat u_N(t')) \, \D_x (I-\Delta)^{-1} \varphi \, \diff x \, \diff t'.
\end{eqnarray*}
By \eqref{eq:convergence-in-Z} we have that $\diff \hat\P$-almost surely $\norm{\hat u_N - \hat u}_{C(Q_T)} \to 0$ as $N \to \infty$, which in particular implies that $\diff \hat\P$-almost surely in $L^2(Q_T)$
\[
\ind_{[0,t]} F_\eps^2(\hat u_N)\to \ind_{[0,t]} F^2_\eps(\hat u) \quad \mbox{as} \quad N \to \infty.
\]
Since in addition from \eqref{eq:convergence-in-Z} and \eqref{eq:identification-J} we have that $\diff \hat\P$-almost surely in $L^2(Q_T)$
\[
\D^3_x \hat u_N \rightharpoonup \D^3_x \hat u \quad \mbox{as} \quad N \to \infty,
\]
one easily deduces that for each $t \in [0,T]$, $\diff \hat \P$-almost surely
\begin{equation}\label{eq:MltoM-in-probability}
\inner{M_N(\hat u_N ,t), \varphi}_{H^{-1}(\bT)} \to \inner{M(\hat u , t), \varphi}_{H^{-1}(\bT)} \quad \mbox{as} \quad N \to \infty.
\end{equation}
In addition, we have
\[
\gamma_{\hat u_N}^2 \inner{\Pi_N \D_x \left( \sigma_k F_\eps \left( \hat u_N \right)\right) , \varphi}_{H^{-1}(\bT)}^2 = \gamma_{\hat u_N}^2 \inner{\sigma_k F_\eps \left( \hat u_N \right) , \D_x (I-\Delta)^{-1}\varphi}_{L^2(\bT)}^2,
\]
which combined with \eqref{eq:convergence-in-Z} (uniform convergence in $(t,x)$)
implies that $\diff \hat\P$-almost surely as $N \to \infty$
\begin{eqnarray*}
\lefteqn{\int_0^t \gamma_{\hat u_N}^2(t') \inner{\Pi_N \D_x \left( \sigma_k F_\eps \left( \hat u_N(t') \right)\right) , \varphi}_{H^{-1}(\bT)}^2  \, \diff t'}
\\
&\to& \int_0^t \gamma_{\hat u_N}^2(t') \inner{\sigma_k F_\eps \left( \hat u(t') \right) , \D_x (I-\Delta)^{-1}\varphi}_{L^2(\bT)}^2 \diff t' 
 \\
&=& \int_0^t \gamma_{\hat u_N}^2(t') \left(\D_x \left( \sigma_k F_\eps \left( \hat u(t') \right)\right) , \varphi\right)_{H^{-1}(\bT)}^2  \, \diff t'.
\end{eqnarray*}
Hence, we have in particular that $M^2_N(\hat u_N, t) \to M^2(\hat u, t)$ as $N \to \infty$ in probability. Similarly one shows  that  $M^3_N(\hat u_N, t) \to M^3(\hat u, t)$. Therefore, for each $t \in [0,T]$ we have that $M^i_N(\hat u_N, t) \to M^i(\hat u, t)$ in probability for $i\in\{1,2,3\}$. Moreover, for $q := \frac{2 p}{n} > 2$ we have
\begin{eqnarray*}
\lefteqn{\sup_{N\in\N} \hat\E \abs{\inner{M_N(\hat u_N ,t), \varphi}_{H^{-1}(\bT)}}^q}
\\
&=& \sup_{N\in\Z} \E \abs{\sum_{k \in \Z}  \int_0^t \gamma_{u_N}^2(t') \inner{\sigma_k F_\eps(u_N(t')) , \D_x (I-\Delta)^{-1}\varphi}_{L^2(\bT)} \diff\beta^k(t')}^q
\\
&\le&C \sup_{N\in\N} \E \left(\int_0^t \sum_{k=1}^\infty \gamma_{u_N}^4(t') \inner{\sigma_k F_\eps(u_N(t')) , \D_x (I-\Delta)^{-1}\varphi)}_{L^2(\bT)}^2 \diff t' \right) ^{\frac q 2}
\\
&\le& C \left(\sum_{k \in \Z} \norm{\sigma_k}_{L^2(\bT)}^2\right)^{\frac q 2} \norm{\D_x (I-\Delta)^{-1}\varphi}_{L^\infty(\bT)}^q \left( 1+ \sup_{N\in\N} \E \norm{u_N}_{L^\infty(Q_T)}^{\frac{q n}{2}} \right)
\\
&\stackrel{\eqref{eq:reg-sigma-k}}{\le}& C \left( 1+ \sup_{N\in\N} \E \sup_{t' \in [0,T]} \norm{u_N(t')}_{H^1(\bT)}^p\right) \stackrel{\eqref{eq:energy-R-eps-dependent}}{<} \infty,
\end{eqnarray*}
where in the last step we have used $\frac{q n}{2} = p$, Sobolev's inequality, and \eqref{eq:equation-eps-R-dependent} combined with conservation of mass. Similarly, for $q := \frac{2 p}{n} > 2$,
\begin{eqnarray*}
\lefteqn{\sup_{N\in\N} \hat \E \left(\sum_{k\in\Z}  \int_0^t \gamma_{\hat u_N}^2(t') \inner{\D_x\left( \sigma_k F_\eps(\hat u_N(t'))\right) , \varphi }_{H^{-1}(\bT)}^2 \diff t'\right)^{\frac q 2}} \\
&\le& C \sup_{N\in\N} \E \left(\sum_{k \in \Z} \int_0^t \gamma_{\hat u_N}^2(t') \inner{\sigma_k F_\eps(u_N(t')) , \D_x (I-\Delta)^{-1}\varphi}_{L^2(\bT)}^2 \diff t'\right)^{\frac q 2} \\
&\stackrel{\eqref{eq:reg-sigma-k}}{\le}& C \left( 1+ \sup_{N\in\N} \E \sup_{t' \in [0,T]} \norm{u_N(t')}_{H^1(\bT)}^p\right) \stackrel{\eqref{eq:energy-R-eps-dependent}}{<} \infty,
\end{eqnarray*}
from which one deduces that for each $i =1,2,3$ and $t \in [0,T]$, the $M^i_N(\hat u_N, t)$ are uniformly integrable in $\hat\omega \in \hat\Omega$. 
 Hence, we can pass to the limit in \eqref{eq:martingale-Ml} to obtain
\begin{equation}\label{eq:martingale-property}
\hat{\E}\left[\Phi\big(\hat u|_{[0,t']}, \hat \beta|_{[0,t']}\big) \big(M^i(\hat u, t)- M^i(\hat u, t')\big)\right] = 0.
\end{equation}
In addition, using the continuity of $ M^i(\hat u, t)$ in $\varphi$, the uniform integrability in $\hat\Omega$, and the fact that $\bigcup_N V_N$ is dense in $H^{-1}(\bT)$, it follows that  \eqref{eq:martingale-property} holds also for all $\varphi \in H^{-1}(\bT)$. Hence, for all $\varphi \in H^{-1}(\bT)$, $i = 1,2,3$, one can see that the $\hat {M}^i(\hat u , t)$ are continuous $\mathcal{G}_t$-martingales having finite $\frac q 2$-moments, where $q := \frac{2 p}{n}$. In particular, by Doob's maximal inequality, they are uniformly integrable (in $t \in [0,T]$), which combined with continuity (in $t \in [0,T]$) implies that they are also $\hat{\mathcal{F}}_t$-martingales. By \cite[Proposition A.1]{HOF2} we obtain that $\diff \hat\P$-almost surely, for all $\varphi \in H^{-1}(\bT)$, $t \in [0,T]$,
\begin{eqnarray}
\lefteqn{\inner{\hat {u}(t), \varphi}_{H^{-1}(\bT)}} \nonumber \\
&=& \inner{\hat {u}(0,\cdot), \varphi}_{H^{-1}(\bT)}+\int_0^t \inner{\D_x \left( -F_\eps^2(\hat{u}(t')) \D_x^3  \hat{u}(t') \right) , \varphi}_{H^{-1}(\bT)} \diff t' \nonumber \\
&&+ \frac 1 2 \sum_{k \in \Z} \int_0^t {\gamma_{\hat u}^2(t')} \left(\partial_x \left(\sigma_k  F_\eps'(\hat u(t')) \partial_x(\sigma_k F_\eps(\hat u(t')))\right), \varphi \right)_{H^{-1}(\bT)} \diff t' \nonumber \\
&&+\sum_{k\in\Z}   \int_0^t \gamma_{\hat u}(t') \left(\partial_x\left(\sigma_k  F_\eps(\hat{u}(t'))\right), \varphi\right)_{H^{-1}(\bT)} \diff \hat\beta^k(t')
\label{eq:solving-the-equation}    
\end{eqnarray}
Choosing $\varphi := (I-\Delta) \psi$  in \eqref{eq:solving-the-equation} for $\psi \in C^\infty(\bT)$, we obtain that for $\diff \hat\P \otimes \diff t$-almost all $(\hat {\omega}, t) \in \hat\Omega \times [0,T]$
\begin{eqnarray*}
\inner{\hat {u}(t), \psi}_{L^2(\bT)} &=& (u^{(0)}, \psi)_{H^{-1}(\bT)} + \int_0^t \inner{F_\eps^2 (\hat{u}(t')) \D_x^3 \hat{u}(t') ,\D_x \psi}_{L^2(\bT)} \diff t' 
\\
&& - \frac 1 2 \int_0^t {\gamma_{\hat u}^2(t')} \left( \sigma_k  F_\eps'(\hat u(t')) \partial_x(\sigma_k F_\eps(\hat u(t'))), \partial_x \psi \right)_{L^2(\bT)} \diff t'
\\
&& + \sum_{k\in\N} \int_0^t \gamma_{\hat u}(t') \inner{\sigma_k  F_\eps(\hat{u}(t')), \partial_x \psi}_{L^2(\bT)} \diff \hat\beta^k(t').    
\end{eqnarray*}
By \cite[Theorem 3.2]{KR79} we have that $\hat u $ is an $ \hat{\mathbb{F}}$-adapted continuous $L^2(\bT)$-valued process and therefore the above equality is satisfied $\diff \hat\P$-almost surely, for all $t \in [0,T]$. Moreover, from the above and the fact that $\hat u$ satisfies Definition ~\ref{def:weak-sol-eps-R} \eqref{item:integrability}, it follows that for all $\psi \in C^\infty(\bT)$, for almost all $(\hat \omega, t ) \in \hat \Omega \times (0,T)$, we have 
\[
\inner{\D_x \hat{u}(t), \psi}_{L^2(\bT)} = \inner{\D_x u^{(0)}, \psi}_{L^2(\bT)} + \int_0^t {}_{H^{-{2}}(\bT)}\left\langle v^*(t'), \psi \right\rangle_{H^2(\bT)} \diff t' + \inner{M( \hat{u}, t), \psi}_{L^2(\bT)},
\]
where 
$$
v^*:= \Delta \left( -F_\eps^2 (\hat{u}) \D_x^3 \hat{u} \right) 
$$
is a predictable $H^{-2}(\bT)$-valued process such that with probability one  $v^* \in L^2((0, T) ; H^{-2}(\bT))$,   $M(\hat{u}, \cdot)$ is an $L^2(\bT)$-valued martingale, and the duality between $H^2(\bT)$ and $H^{-2}(\bT)$ is given by means of the inner product in $L^2(\bT)$. Hence, $\D_x \hat{u}$ also satisfies the conditions of   \cite[Theorem 3.2]{KR79} with the choices $V = H^2(\bT)$ and $H=L^2(\bT)$. Consequently, $\D_x \hat u$ is also continuous $L^2(\bT)$-valued. This finishes the proof.
\end{proof}
%

%%%%%%%%%%%%%%%%%%%%%
%%%%%%%%%%%%%%%%%%%%%
\section{A-priori Estimates\label{sec:apriori}}
%%%%%%%%%%%%%%%%%%%%%
%%%%%%%%%%%%%%%%%%%%%
In this section, we use the definitions and assumptions of \S\ref{sec:setting}. 
%%%%%%%%%%%%%%%%%%%%%
%%%%%%%%%%%%%%%%%%%%%
\subsection{Entropy Estimate}
%%%%%%%%%%%%%%%%%%%%%
%%%%%%%%%%%%%%%%%%%%%
 For $r \in \R$,  let us set 
\begin{equs}\label{eq:def-G-eps}
G_\eps(r)= \int_r^\infty \int_{r'}^\infty \frac{1}{F_\eps^2 (r'')} \, \diff r'' \diff r' \quad \mbox{and} \quad  H_\eps(r)= \int_r^\infty \frac{1}{F_\eps(r')}\, \diff r',
\end{equs}
where $F_\eps(r)$ was introduced in \eqref{eq:def-f-eps}. We collect some properties of $F_\eps$, $G_\eps$, and $H_\eps$ that we will need later on. 

\begin{lemma}\label{lem:lnF}
Let $n > 2$. Then there exists a constant $C_n < \infty$, only depending on $n$, such that for all $r \in \R$ and all $\eps \in (0,1)$ we have
\begin{equs}
\abs{\ln F_\eps(r)} \le C_n \left(G_\eps(r) + \abs{r} + 1\right). 
\end{equs}
\end{lemma}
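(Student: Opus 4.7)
The plan is to use the explicit form $\ln F_\eps(r) = \frac{n}{4} \ln(r^2+\eps^2)$ and to split into two cases according to whether $r^2+\eps^2 \ge 1$ or $r^2+\eps^2 < 1$. In the first case the logarithm is nonnegative and, since $\eps < 1$ gives $r^2+\eps^2 \le (1+\abs{r})^2$, I obtain $\abs{\ln F_\eps(r)} \le \frac{n}{2}\ln(1+\abs{r}) \le \frac{n}{2}\abs{r}$ at once.

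In the remaining case $r^2+\eps^2 < 1$, the quantity $\abs{\ln F_\eps(r)} = -\frac{n}{4}\ln(r^2+\eps^2)$ must be absorbed into $G_\eps(r)$. Since $(r^2+\eps^2)^{-(n-2)/2} \ge 1$, the elementary inequality $\ln y \le y$ for $y \ge 1$ yields
\[
-\ln(r^2+\eps^2) \le \frac{2}{n-2}\,(r^2+\eps^2)^{(2-n)/2},
\]
reducing matters to a lower bound of the form $G_\eps(r) \ge c_n\,(r^2+\eps^2)^{(2-n)/2}$. I would first prove this for $r \ge 0$ via the Fubini representation $G_\eps(r) = \int_r^\infty (r''-r)(r''^2+\eps^2)^{-n/2}\,\diff r''$, by integrating over $[r,2r]$ in the regime $r \ge \eps$ (which yields $G_\eps(r) \ge c_n\, r^{2-n}$ using $(r''^2+\eps^2)^{-n/2} \ge (5r^2)^{-n/2}$) and over $[r,r+\eps]$ in the regime $0 \le r < \eps$ (which yields $G_\eps(r) \ge c_n\, \eps^{2-n}$ using $(r''^2+\eps^2)^{-n/2} \ge (5\eps^2)^{-n/2}$); in both regimes the resulting lower bound dominates $(r^2+\eps^2)^{(2-n)/2}$, the monotonicity of $x \mapsto x^{(2-n)/2}$ being used to pass between $r^{2-n}$ or $\eps^{2-n}$ and $(r^2+\eps^2)^{(2-n)/2}$.

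Finally, the case $r < 0$ reduces to $r \ge 0$ by symmetry: $F_\eps$ is even, so $\abs{\ln F_\eps(r)} = \abs{\ln F_\eps(\abs{r})}$, while $G_\eps$ is strictly decreasing, since $G_\eps'(r) = -\int_r^\infty F_\eps^{-2}(r'')\,\diff r'' < 0$, hence $G_\eps(\abs{r}) \le G_\eps(r)$ for $r \le 0$, and the bound just proved applied to $\abs{r}$ then yields the claim. The only non-routine step is the two-regime lower bound on $G_\eps$; everything else amounts to elementary manipulations of the logarithm.
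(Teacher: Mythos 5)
Your proof is correct, and it takes a genuinely different route from the paper's. The paper splits first on the sign of $r$: for $r \ge 0$ it replaces $r^2+\eps^2$ by $(r+\eps)^2$ (up to a factor $2$), bounds $\abs{\ln(r+\eps)}$ by $C_n\big(r+\eps+(r+\eps)^{2-n}+1\big)$, and then uses the \emph{exact} double-integral identity $(r+\eps)^{2-n} = (n-1)(n-2)\int_r^\infty\int_{r'}^\infty (r''+\eps)^{-n}\,\diff r''\,\diff r'$ together with the pointwise bound $(r''+\eps)^{-n}\le F_\eps^{-2}(r'')$ to control that term by $G_\eps(r)$; for $r<0$ it argues separately according to whether $r^2+\eps^2\ge 1$ (direct bound by $\abs{r}+\eps$) or not ($\abs{\ln F_\eps(r)}\le C_n\eps^{2-n}\le C_n G_\eps(0)\le C_n G_\eps(r)$). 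You instead split once on $r^2+\eps^2\gtrless 1$, treat $r<0$ uniformly by the evenness of $F_\eps$ and monotonicity of $G_\eps$, work directly with the symmetric quantity $(r^2+\eps^2)^{(2-n)/2}$, and prove the key lower bound $G_\eps(r)\ge c_n(r^2+\eps^2)^{(2-n)/2}$ by restricting the Fubini representation $G_\eps(r)=\int_r^\infty(r''-r)(r''^2+\eps^2)^{-n/2}\,\diff r''$ to $[r,2r]$ or $[r,r+\eps]$. The paper's exact identity avoids any domain restriction but requires the intermediate passage from $(r^2+\eps^2)$ to $(r+\eps)^2$ and a nested case analysis for $r<0$; your version has a flatter case structure and a more directly symmetric comparison quantity. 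Both yield the same conclusion with the same level of elementarity; the details you defer (the two-regime lower bound on $G_\eps$) check out as sketched.
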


\begin{proof}
Suppose first that $r \ge 0$. We have 
\begin{equs}
\abs{\ln F_\eps(r)} \stackrel{\eqref{eq:def-f-eps}}{\le} \frac n 4 \abs{\ln \left(r^2+\eps^2\right)} & \le \frac n 4 \left( \ln 2+ 2 \abs{\ln (r+\eps)} \right) \le C_n \left(r+\eps + (r+\eps)^{2-n} + 1\right).
\end{equs}
Then, notice that 
\begin{equs}\label{eq:r+eps}
(r+\eps)^{2-n}= (n-1)(n-2)\int_r^\infty \int_{r'}^\infty \frac{1}{(r''+\eps)^n} \, \diff r'' \, \diff r' \stackrel{\eqref{eq:def-G-eps}}{\le} C_n G_\eps(r),
\end{equs}
since $(r''+\eps)^{-n} \le \left((r'')^2+\eps^2\right)^{-\frac n 2} = F_\eps^{-2}(r'')$. This proves the inequality when $r \ge 0$. If $r <0$, let us first consider the case $r^2+\eps^2 \ge 1$. In this case, we have
\begin{equs}
\abs{\ln F_\eps(r)} \stackrel{\eqref{eq:def-f-eps}}{\le} \frac n 4 \abs{\ln \left(r^2+\eps^2\right)} \le \frac n 4 \ln\left(\abs{r}+\eps\right)^2  \le \frac n 2 \left(\abs{r}+\eps\right)
\end{equs}
due to $\left(\abs{r}+\eps\right)^2 \ge r^2+\eps^2 \ge 1$. This again shows the desired inequality.  If $0 \leq r^2+\eps^2 \leq 1$, then
\begin{equs}
\abs{\ln F_\eps(r)} \stackrel{\eqref{eq:def-f-eps}}{\le} \frac n 4 \abs{\ln \eps^2} \le C_n \eps^{2-n} \leq C_n \int_0^\infty \int_0^\infty \frac{1}{(r''+\eps)^n} \, \diff r'' \, \diff r' \stackrel{\eqref{eq:def-G-eps}}{\le} C_n G_\eps(0) \le C_n G_\eps(r), 
\end{equs}
since $G_\eps$ is decreasing. This finishes the proof.
\end{proof}
\begin{lemma}\label{lem:H}
Let $n> 2$. Then there exists a constant $C_n < \infty$, only depending on $n$, such that for all $r \in \R$ and all $\eps \in (0,1)$ we have 
\begin{equs}
H_\eps^2(r) \le C_n  G_\eps(r).
\end{equs} 
\end{lemma}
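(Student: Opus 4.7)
The plan is to reduce to the case $r \ge 0$ by a case distinction, exploiting the evenness of $F_\eps$ and the monotonicity of $G_\eps$.

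For $r \ge 0$, the bound is direct. The two-sided comparison $\tfrac 1 2 (s+\eps)^2 \le s^2+\eps^2 \le (s+\eps)^2$ (valid for $s\ge 0$) together with \eqref{eq:def-f-eps} reduces both $H_\eps(r)$ and $G_\eps(r)$ to explicit power-law integrals, analogously to \eqref{eq:r+eps}. One finds $H_\eps(r) \le C_n (r+\eps)^{1-n/2}$ and $G_\eps(r) \ge c_n (r+\eps)^{2-n}$ with constants depending only on $n>2$, from which the claim is immediate on the positive half-line.

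For $r < 0$, I would split $H_\eps(r) = H_\eps(0) + \int_r^0 F_\eps^{-1}(r')\,dr'$ and apply $(a+b)^2 \le 2a^2+2b^2$. The first piece is handled by the previous step combined with the monotonicity of $G_\eps$: since $G_\eps'(r) = -\int_r^\infty F_\eps^{-2} \le 0$, one has $H_\eps(0)^2 \le C_n G_\eps(0) \le C_n G_\eps(r)$. For the second piece, Cauchy-Schwarz on the bounded interval $[r,0]$ gives $\bigl(\int_r^0 F_\eps^{-1}\bigr)^2 \le (-r)\int_r^0 F_\eps^{-2}$, and the evenness of $F_\eps$ turns the last factor into $\int_0^{-r} F_\eps^{-2} \le \int_0^\infty F_\eps^{-2}$. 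To match this against $G_\eps(r)$, I would use the trivial lower bound
\[
G_\eps(r) = \int_r^\infty (r''-r)\,F_\eps^{-2}(r'')\,dr'' \ge \int_0^\infty (r''-r)\,F_\eps^{-2}(r'')\,dr'' \ge (-r)\int_0^\infty F_\eps^{-2},
\]
which controls the second piece by $G_\eps(r)$ as well; adding the two estimates finishes the proof.

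The main subtlety I expect is to keep $C_n$ independent of $\eps \in (0,1)$. A crude pointwise bound like $F_\eps^{-1}(0) = \eps^{-n/2}$ would introduce an explicit $\eps$-dependence, while a direct Cauchy-Schwarz attempting to produce the weight $(r''-r)$ from $G_\eps$ in one shot diverges both at $r''=r$ and at infinity. The split above avoids both pitfalls: Cauchy-Schwarz is applied only on the bounded interval $[r,0]$ where it is harmless, and the $\eps$-dependent tail factor $\int_0^\infty F_\eps^{-2}$ is absorbed by the matching factor coming from the $[0,\infty)$ portion of $G_\eps$.
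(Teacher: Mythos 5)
Your proof is correct. For $r\ge 0$ you follow essentially the same route as the paper: comparing $F_\eps$ with the explicit power $(s+\eps)^{n/2}$ via the two-sided bound $\tfrac12(s+\eps)^2\le s^2+\eps^2\le(s+\eps)^2$, computing $H_\eps(r)\le C_n(r+\eps)^{1-n/2}$, and then invoking $(r+\eps)^{2-n}\le C_n G_\eps(r)$ as in \eqref{eq:r+eps}.

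For $r<0$ you take a genuinely different (and somewhat longer) path. The paper observes that by evenness of $F_\eps$ the whole integral $\int_r^0 F_\eps^{-1}(r')\,\diff r'$ is directly dominated by $H_\eps(0)$, so $H_\eps(r)\le 2H_\eps(0)$ and one simply squares and reduces to the $r=0$ case using $G_\eps$ decreasing. You instead split $H_\eps^2(r)\le 2H_\eps^2(0)+2\bigl(\int_r^0 F_\eps^{-1}\bigr)^2$, handle the first term the same way, and for the second apply Cauchy--Schwarz on $[r,0]$ to get $(-r)\int_0^\infty F_\eps^{-2}$, which you then absorb into the tailored lower bound $G_\eps(r)\ge(-r)\int_0^\infty F_\eps^{-2}$ obtained via the Fubini identity $G_\eps(r)=\int_r^\infty(r''-r)F_\eps^{-2}(r'')\,\diff r''$. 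Both arguments are sound and $\eps$-uniform; the paper's is shorter because it exploits the one-line evenness bound before ever squaring, whereas yours introduces an additional Cauchy--Schwarz step and a new integral representation of $G_\eps$ that the paper does not need. On the other hand, your route makes explicit the quantitative mechanism by which $G_\eps$ dominates the tail of $H_\eps$ on the negative axis, which some readers may find more transparent than the evenness trick.
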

\begin{proof}
Let us first look at the case $r\geq 0$. We have 
\begin{eqnarray}
H^2_\eps(r) &\stackrel{\eqref{eq:def-f-eps}}{\le}& 2^{\frac n 2} \left(\int_r^\infty \frac{1}{(r'+\eps)^{\frac n 2}} \, \diff r'\right)^2 = \frac{2^{\frac n 2 + 2}}{(n-2)^2} (r+\eps)^{2-n} \nonumber \\
&\le& 2^{\frac n 2 + 2} \frac{n-1}{n-2} \int_r^\infty \int_{r'}^\infty \frac{1}{(r''+\eps)^n} \, \diff r'' \, \diff r' \nonumber \\
&\stackrel{\eqref{eq:def-f-eps}}{\le}& 2^{\frac n 2 + 2} \frac{n-1}{n-2}  G_\eps(r), \label{eq:H-eps}
\end{eqnarray}
where for the last inequality we have used \eqref{eq:r+eps}. Hence, we only have to check the case $r<0$. In this case we have 
\begin{equs}
H_\eps(r) = \int_r^0 \frac{1}{F_\eps(r')} \, \diff r'+ H_\eps(0) \stackrel{\eqref{eq:def-f-eps}}{\le} 2 H_\eps(0)
\end{equs}
since $F_\eps$ is even. Therefore,
\begin{equs}
H_\eps^2(r) \le 4 H_\eps^2(0) \le 2^{\frac n 2 + 4} \frac{n-1}{n-2} G_\eps(r),
\end{equs}
where we have used \eqref{eq:H-eps} and the fact that $G_\eps$ is decreasing. This finishes the proof. 
\end{proof}
\begin{lemma}[Entropy Estimate]\label{lem:entropy}
Suppose that  $n \in (2,4]$, $T \in (0,\infty)$, $p \ge 1$, and $u^{(0)} \in L^p\left(\Omega;\cF_0,\P;H^1(\bT)\right)$. For a weak solution of problem~\eqref{eq:problem-eps-R-dependent} in the sense of Definition~\ref{def:weak-sol-eps-R} it holds
\begin{eqnarray}
\lefteqn{\hat\E \sup_{t \in [0,T]} \norm{G_\eps\left(\hat u_{\eps,R}(t)\right)}_{L^1(\bT)}^p + \hat \E  \norm{\D^2_x \hat u_{\eps,R}}_{L^2(Q_T)}^{2p}} \nonumber \\
&\le& C \, \hat\E \left( \norm{G_\eps\left(\hat u^{(0)}\right)}_{L^1(\bT)}^p + \abs{ \A\left({\hat u}^{(0)}\right)}^{2p}+1\right), \label{eq:entropy-estimate}
\end{eqnarray}
where $C< \infty$ is a constant depending only on $p$, $\sigma = \left(\sigma_k\right)_{k \in \Z}$, and $T$. 
\end{lemma}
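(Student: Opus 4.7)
The plan is to apply It\^o's formula to the entropy functional $v\mapsto\int_\bT G_\eps(v)\,dx$ along the weak solution $\hat u:=\hat u_{\eps,R}$. Since the cutoff $\gamma:=g_R(\|\hat u\|_\Linfty)$ gives an a.s.\ $L^\infty$-bound on $\hat u$ and $F_\eps\ge\eps^{n/2}>0$, Definition~\ref{def:weak-sol-eps-R}\eqref{item:integrability} yields $\partial_x^3\hat u\in L^2(Q_T)$ (not merely weighted by $F_\eps$), so all integrations by parts below are meaningful. Rigorous justification of the chain rule at this level of regularity is most cleanly obtained by running the same computation on the Galerkin approximation $u_N$ of Proposition~\ref{prop:weak-eps-r}, controlling the projection error in the obvious way, and passing to the limit using Fatou's lemma together with weak lower semi-continuity of $v\mapsto\|\partial_x^2 v\|_{L^2(Q_T)}^2$; in what follows I work formally on $\hat u$.

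The central computation is that, using $G_\eps''(r)=1/F_\eps^2(r)$ and one integration by parts,
\[
\int_{\bT} G_\eps'(\hat u)\,\partial_x\bigl(-F_\eps^2(\hat u)\,\partial_x^3\hat u\bigr)\,dx = \int_{\bT}\partial_x\hat u\,\partial_x^3\hat u\,dx = -\|\partial_x^2\hat u\|_{L^2(\bT)}^2,
\]
which delivers the regularization term on the left-hand side of \eqref{eq:entropy-estimate}. The Stratonovich--It\^o drift correction and the It\^o second-order term contribute $\tfrac12\sum_k\gamma^2[A_k+B_k]\,dt$ where, after expanding $\partial_x(\sigma_k F_\eps(\hat u))=\sigma_k'F_\eps(\hat u)+\sigma_k F_\eps'(\hat u)\partial_x\hat u$ and integrating $A_k$ once by parts,
\begin{align*}
A_k &= -\int_{\bT}\sigma_k\sigma_k'\,\tfrac{F_\eps'(\hat u)}{F_\eps(\hat u)}\partial_x\hat u\,dx - \int_{\bT}\sigma_k^2\,\tfrac{(F_\eps'(\hat u))^2}{F_\eps^2(\hat u)}(\partial_x\hat u)^2\,dx,\\
B_k &= \int_{\bT}(\sigma_k')^2\,dx + 2\int_{\bT}\sigma_k\sigma_k'\,\tfrac{F_\eps'(\hat u)}{F_\eps(\hat u)}\partial_x\hat u\,dx + \int_{\bT}\sigma_k^2\,\tfrac{(F_\eps'(\hat u))^2}{F_\eps^2(\hat u)}(\partial_x\hat u)^2\,dx.
\end{align*}
The $(\partial_x\hat u)^2$ contributions cancel exactly, while the surviving cross terms sum to $\tfrac12\int_\bT\partial_x(\sigma_k^2)\,\partial_x(\ln F_\eps(\hat u))\,dx=-\tfrac12\int_\bT\partial_x^2(\sigma_k^2)\ln F_\eps(\hat u)\,dx$, so that $A_k+B_k=\|\sigma_k'\|_{L^2(\bT)}^2-\tfrac12\int_\bT\partial_x^2(\sigma_k^2)\ln F_\eps(\hat u)\,dx$. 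Invoking Lemma~\ref{lem:lnF} and the summability \eqref{eq:winf-sigma-k}, the total drift correction is pointwise in $t$ bounded by $C(1+\|G_\eps(\hat u)\|_{L^1(\bT)}+\|\hat u\|_{L^1(\bT)})$, and mass conservation (Remark~\ref{rem:conservation-of-mass}) identifies $\|\hat u\|_{L^1(\bT)}=L\,\A(\hat u^{(0)})$.

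For the martingale contribution, integration by parts together with $H_\eps'=-1/F_\eps$ gives
\[
\sum_{k\in\Z}\gamma\int_{\bT} G_\eps'(\hat u)\,\partial_x(\sigma_k F_\eps(\hat u))\,dx\,d\beta^k = -\sum_{k\in\Z}\gamma\int_{\bT}\sigma_k'\,H_\eps(\hat u)\,dx\,d\beta^k,
\]
whose quadratic variation is, by Lemma~\ref{lem:H}, controlled by $C\bigl(\sum_k\|\sigma_k'\|_{L^\infty(\bT)}^2\bigr)\,\|G_\eps(\hat u)\|_{L^1(\bT)}$. Combining the three contributions, raising to the $p$-th power, taking $\sup_{t\le T}$ and expectation, applying the Burkholder--Davis--Gundy inequality to the martingale and using Young's inequality to absorb a small multiple of $\hat\E\sup_t\|G_\eps(\hat u(t))\|_{L^1(\bT)}^p$ into the left-hand side, Gr\"onwall's lemma delivers \eqref{eq:entropy-estimate}. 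The main obstacle is the first step: the identity $\int G_\eps'(\hat u)\partial_x(-F_\eps^2(\hat u)\partial_x^3\hat u)\,dx=-\|\partial_x^2\hat u\|_{L^2(\bT)}^2$ requires $\partial_x^3\hat u\in L^2(Q_T)$ pathwise, which is available only thanks to the strict positivity $F_\eps\ge\eps^{n/2}>0$; the resulting bound is nonetheless $\eps$-uniform because its right-hand side is, which is precisely what the $\eps\searrow 0$ passage in \S\ref{sec:eps-limit} will exploit.
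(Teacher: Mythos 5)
Your argument mirrors the paper's proof: It\^o's formula applied to $\int_\bT G_\eps(\hat u)\,\diff x$, the dissipation $-\|\partial_x^2\hat u\|_{L^2(\bT)}^2$ from $G_\eps''=F_\eps^{-2}$, the cancellation of the $(\partial_x\hat u)^2$ contributions between the Stratonovich correction and the It\^o quadratic term giving $A_k+B_k=\int_\bT(\partial_x\sigma_k)\,F_\eps^{-1}(\hat u)\,\partial_x(\sigma_k F_\eps(\hat u))\,\diff x$ exactly as in \eqref{eq:Entropy-Ito}, the rewrite of the martingale in terms of $H_\eps$, and the use of Lemmata~\ref{lem:lnF} and~\ref{lem:H} together with Burkholder--Davis--Gundy and Gr\"onwall.

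There is a genuine gap at one step. You claim that mass conservation identifies $\|\hat u\|_{L^1(\bT)}=L\,\A(\hat u^{(0)})$. Remark~\ref{rem:conservation-of-mass} gives $\int_\bT\hat u(t)\,\diff x=L\,\A(\hat u^{(0)})$, but $\|\hat u\|_{L^1(\bT)}=\int_\bT|\hat u|$ equals the mass only if $\hat u\ge0$, and this is \emph{not} available for the $(\eps,R)$-approximation: since $G_\eps$ is finite on all of $\R$, the regularized entropy does not force a sign on $\hat u_{\eps,R}$, and nonnegativity appears only in the limit $\eps\searrow0$. Thus the $\|\hat u\|_{L^1}$ contribution produced by Lemma~\ref{lem:lnF} is not directly controlled by the initial data and your Gr\"onwall step does not close as written. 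The correct route --- and what the paper does --- is to use $\|\hat u\|_{L^1}\lesssim\|\hat u\|_{L^2}\le C\bigl(\|\partial_x^2\hat u\|_{L^2(\bT)}+|\A(\hat u^{(0)})|\bigr)$ by Poincar\'e together with conservation of mass, and then absorb the resulting small multiple of $\|\partial_x^2\hat u\|_{L^2(Q_t)}^2$ via Young's inequality into the dissipation term \emph{before} taking $p$-th powers and expectations. A further minor inaccuracy: the cut-off $g_R(\|\hat u\|_{\Linfty})$ does not give a pathwise $L^\infty$ bound on $\hat u$ --- it merely switches the stochastic terms off; the integrability $\partial_x^3\hat u\in L^2(Q_T)$ that you invoke follows already from $F_\eps\ge\eps^{n/2}$ together with Definition~\ref{def:weak-sol-eps-R}~\eqref{item:integrability} alone, and a stopping-time localization is needed before applying Burkholder--Davis--Gundy, as in the paper.
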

\begin{proof}
For the convenience of the reader, we simply write $\hat u$ instead of $\hat u_{\eps,R}$. By It\^o's formula \cite{Krylov2013} we have
\begin{eqnarray*}
\int_{\bT} G_\eps\left(\hat u(t)\right) \, \diff x &\stackrel{\eqref{eq:equation-eps-R-dependent}}{=}& \int_{\bT} G_\eps\left(\hat u^{(0)}\right) \diff x + \int_0^t \int_{\bT} G_\eps''( \hat u)  F_\eps^2(\hat u) (\D^3_x \hat u) \, \D_x \hat u \,  \diff x \, \diff t' \\
&& + \frac 1 2 \sum_{k \in \Z} \int_0^t \gamma_{\hat u}^2 \int_{\bT} G_\eps'\left(\hat u\right) \D_x \left(\sigma_k F_\eps' (\hat u) \, \D_x (\sigma_k F_\eps (\hat u))\right) \diff x \, \diff t' \\
&& + \frac 1 2 \sum_{k \in \Z} \int_0^t \gamma_{\hat u}^2 \int_{\bT} G_\eps''(\hat u) \left(\D_x (\sigma_k F_\eps(\hat u))\right)^2 \diff x \, \diff t' \\
&& + \sum_{k \in \Z} \int_0^t \gamma_{\hat u} \int_{\bT} G_\eps'(\hat u) \left(\D_x (\sigma_k F_\eps(\hat u))\right) \diff x \, \diff \hat\beta^k(t'),
\end{eqnarray*}
$\diff \hat\P$-almost surely, where $\gamma_{\hat u}(t') := g_R\left(\norm{\hat u(t')}_{L^\infty(\bT)}\right)$, so that after integration by parts we get 
\begin{eqnarray}
\int_{\bT} G_\eps\left(\hat u(t)\right) \, \diff x &\stackrel{\eqref{eq:def-G-eps}}{=}& \int_{\bT} G_\eps\left(\hat u^{(0)}\right) \diff x - \int_0^t \int_{\bT} (\D^2_x \hat u)^2 \, \diff x \, \diff t' \nonumber \\
&& + \frac 1 2 \sum_{k \in \Z} \int_0^t \gamma_{\hat u}^2 \int_{\bT} (\partial_x \sigma_k) \, F_\eps^{-1}(\hat u) \left(\D_x (\sigma_k F_\eps(\hat u))\right) \diff x \, \diff t' \nonumber \\
&& - \sum_{k \in \Z} \int_0^t \gamma_{\hat u}^2 \int_{\bT}\sigma_k F_\eps^{-1}(\hat u) \, (\D_x \hat u) \, \diff x \, \diff \hat\beta^k(t'), \label{eq:Entropy-Ito}
\end{eqnarray}
$\diff \hat\P$-almost surely. Then we have for all $\delta >0$
\begin{eqnarray*}
\sum_{k \in \Z} \int_{\bT} (\partial_x \sigma_k) \, F_\eps^{-1}(\hat u) \left(\D_x (\sigma_k F_\eps(\hat u))\right) \diff x &=& \sum_{k \in \Z} \int_{\bT} (\D_x \sigma_k)^2 \, \diff x - \frac 1 2 \sum_{k \in \Z} \int_{\bT} (\D^2_x\sigma_k^2) \, \ln F_\eps(\hat u) \, \diff x
\\
&\stackrel{\eqref{ass-sigma-k}}{\le}& C_{\sigma,\delta} \left(1 + \norm{G_\eps(\hat u)}_{L^1(\bT)}\right)+ \delta \norm{\hat u}_{L^2(\bT)}^2,
\end{eqnarray*}
where for the last inequality we have used Lemma~\ref{lem:lnF}. Moreover, since $\D_x \hat u$ has zero average, we get from Poincar\'e's inequality using conservation of mass (cf.~Remark~\ref{rem:conservation-of-mass}),
\begin{equs}
\norm{\hat u(t)}_{L^2(\bT)} \le C_L \left(\norm{\D_x \hat u(t)}_{L^2(\bT)}+ \abs{\int_{\bT} \hat u(t) \, \diff x}\right) \le C_L \left(\norm{\D_x^2 \hat u(t)}_{L^2(\bT)}+ \abs{ \A({\hat u}^{(0)})}\right).
\end{equs}
Consequently, for any $\delta > 0$ we have
\begin{eqnarray*}
\lefteqn{\sum_{k \in \Z} \int_{\bT} (\partial_x \sigma_k) \, F_\eps^{-1}(\hat u) \left(\D_x (\sigma_k F_\eps(\hat u))\right) \diff x} \\
&\le& C_{L,\sigma,\delta} \left(1 + \abs{ \A({\hat u}^{(0)})}^2 + \norm{G_\eps(\hat u(t))}_{L^1(\bT)}\right) + \delta \norm{\D_x^2 u}_{L^2(\bT)}^2 ,
\end{eqnarray*}
Using this in \eqref{eq:Entropy-Ito}, choosing $\delta > 0$ small, rearranging, and taking the $p$-th power gives 
\begin{eqnarray*}
\norm{G_\eps (\hat u(t))}_{L^1(\bT)}^p +  \norm{\D^2_x \hat u}_{L^2(Q_t)}^{2p} &\leq& C_{L,T,\sigma,p} \left( \norm{G_\eps\left(\hat u^{(0)}\right)}_{L^1(\bT)}^p+ \abs{\A\left(\hat u^{(0)}\right)}^{2p}+1\right) \\
&& + C_p \int_0^t \norm{G_\eps (\hat u(t'))}_{L^1(\bT)}^p \, \diff t' + C_p \abs{M(t)}^p,
\end{eqnarray*}
$\diff \hat\P$-almost surely, where
\[
M(t) := - \sum_{k \in \Z} \int_0^t \gamma_{\hat u}^2 \int_{\bT}\sigma_k F_\eps^{-1}(\hat u) \, (\D_x \hat u) \, \diff x \, \diff \hat\beta^k(t')
\]
is the martingale from \eqref{eq:Entropy-Ito}. Notice that $G_\eps ( \hat u (t))$ is a continuous $L^1(\bT)$-valued process and let us set 
$$
\tau_m= \inf\{ t >0 : \| G_\eps( \hat u(t)) \|_{L^1(\bT)} + \int_0^t \| \D^2_x \hat u \|^2_{L^2(\bT)} \, \diff t' > m \}\wedge T . 
$$
Taking suprema up to $\tau_m \wedge t'$, for $t' \in [0,T]$,   in the above inequality and expectation, we obtain by virtue of the Burkholder-Davis-Gundy inequality
\begin{equation}\begin{split}             
&\hat\E \sup_{t \in[0, t']} \norm{G_\eps (\hat u(t\wedge \tau_m))}_{L^1(\bT)}^p  + \hat\E \norm{\D^2_x \hat u}_{L^2(Q_{t' \wedge \tau_m})}^{2p}\\ 
&\le  C_{L,\sigma,p} \, \hat\E \left( \norm{G_\eps\left(\hat u^{(0)}\right)}_{L^1(\bT)}^p+ \abs{ \A({\hat u}^{(0)})}^{2p}+1\right) \\
& + C_p \, \hat\E  \int_0^{t'}  \norm{G_\eps (\hat u(t'' \wedge \tau_m))}_{L^1(\bT)}^p \, \diff t''   + C_p \, \hat\E  \langle M \rangle_{t' \wedge \tau_m} ^{\frac p 2}. \label{eq:before-gronwall}
\end{split}\end{equation}
Next, by integration by parts, we have
 \begin{eqnarray*}
 \hat\E \langle M \rangle_{t' \wedge \tau_m}^{\frac p 2} &=& \hat\E \left(\sum_{k \in \Z} \int_0^{t' \wedge \tau_m} \gamma_{\hat u}^2 \left(  \int_{\bT} (\D_x\sigma_k) \, H_\eps(\hat u) \, \diff x \right)^2 \, \diff t'' \right)^{\frac p 2} \\
&\stackrel{\eqref{ass-sigma-k}}{\le}& C_\sigma \, \hat\E \left(\sum_{k \in \Z} \int_0^{t' \wedge \tau_m} \int_{\bT} \left(H_\eps(\hat u)\right)^2 \diff x \, \diff t'' \right)^{\frac p 2}.
\end{eqnarray*}
By Lemma~\ref{lem:H} we see that 
\begin{equs}
\hat\E \langle M \rangle_{t' \wedge \tau_m}^{\frac p 2} & \le C_{\sigma,n} \, \hat\E \left(\int_0^{t' \wedge \tau_m} \norm{G_\eps(\hat u(t''))}_{L^1(\bT)} \, \diff t''\right)^{\frac p 2} 
\\
& \le C_{\sigma,n,p,T} \left(1 + \hat\E \int_0^{t'} \norm{G_\eps(\hat u(t'' \wedge \tau_m))}_{L^1(\bT)}^p \, \diff t''\right).
\end{equs}
Using this and rearranging in \eqref{eq:before-gronwall}, we have the desired inequality by virtue of Gr\"onwall's inequality and Fatou's lemma. 
\end{proof}

%%%%%%%%%%%%%%%%%%%%%
%%%%%%%%%%%%%%%%%%%%%
\subsection{Uniform Energy Estimate}\label{sec:eps-uniform}
%%%%%%%%%%%%%%%%%%%%%
%%%%%%%%%%%%%%%%%%%%%

The following auxiliary result is convenient for deriving an energy estimate.
\begin{lemma}\label{lem:int-energy}
For $\eps \in (0,1)$, $n \in (2,\infty)$, and $r \in \R$ we have
\begin{equation}\label{entropy_abc}
\abs{\int_1^r (F_\eps'')^2(r') \, \diff r'} \le C_n \begin{cases} 1 + \abs{r}^{n-3} & \text{ if } n>3, \\
C_{\vartheta}\left(1 + \abs{r}^{\vartheta} + G_\eps^{{\vartheta}}(r)\right) & \text{ if } n=3 \\
1 + G_\eps^{\frac{3-n}{n-2}}(r) & \text { if } n \in \left(2, 3\right).
\end{cases}
\end{equation}
for any ${\vartheta} > 0$, where $C_n < \infty$ only depends on $n$ and $C_{\vartheta}$ only depends on ${\vartheta}$.
\end{lemma}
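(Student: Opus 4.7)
The plan is to first derive an explicit pointwise bound on $(F_\eps'')^2$ by direct computation from $F_\eps(r') = (r'^2+\eps^2)^{n/4}$. A straightforward differentiation gives
\[
F_\eps''(r') = \tfrac{n}{2}(r'^2+\eps^2)^{n/4-2}\bigl[(\tfrac{n}{2}-1)r'^2+\eps^2\bigr],
\]
and since the bracket is at most $\max(\tfrac{n}{2}-1,1)(r'^2+\eps^2) \le C_n(r'^2+\eps^2)$, squaring yields the key pointwise estimate $(F_\eps'')^2(r') \le C_n(r'^2+\eps^2)^{n/2-2}$. Because $F_\eps''$ is even, for $r<0$ the integral $\int_1^r (F_\eps'')^2 \, \diff r'$ can be split at $0$ and re-expressed in terms of $|r|$, so it suffices to analyze the integrand on $\R_+$ and separately handle the contribution of $[r,0]$ for $r<0$. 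All three cases then reduce to estimating $\int_I (r'^2+\eps^2)^{n/2-2}\, \diff r'$ on appropriate intervals $I$.

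For $n > 3$, the exponent $n/2-2 > -1/2$ means the integrand is only mildly singular. On $|r'| \ge 1$ I would use $(r'^2+\eps^2)^{n/2-2} \le C_n |r'|^{n-4}$, so $\int_1^{|r|} \lesssim_n |r|^{n-3}$. On $[-1,1]$ the substitution $r' = \eps s$ converts the integral to $\eps^{n-3}\int_{|s|\le 1/\eps}(s^2+1)^{n/2-2}\, \diff s$, and the inner integral grows exactly like $(1/\eps)^{n-3}$, so the product is bounded uniformly in $\eps\in(0,1)$. Combined via evenness this yields $|\int_1^r (F_\eps'')^2\, \diff r'| \le C_n(1+|r|^{n-3})$.

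For $n\in(2,3)$, the exponent $n/2-2 < -1/2$ makes the global integral finite. For $r\ge 0$ the sharper estimate $(r'^2+\eps^2)^{n/2-2} \le r'^{n-4}$ gives $\int_r^\infty (r'^2+\eps^2)^{n/2-2}\, \diff r' \le C_n r^{n-3}$, and I would then invoke \eqref{eq:r+eps}, which says $(r+\eps)^{2-n} \le C_n G_\eps(r)$, to conclude $r^{n-3} \le C_n G_\eps(r)^{(3-n)/(n-2)}$. For $r<0$ the global bound $\int_{-\infty}^\infty (r'^2+\eps^2)^{n/2-2}\, \diff r' \le C_n \eps^{n-3}$ (via the same substitution) is combined with the fact that $G_\eps$ is decreasing and $G_\eps(0) \ge c_n \eps^{2-n}$, which upon raising to the power $(3-n)/(n-2)$ absorbs $\eps^{n-3}$ into $G_\eps(r)^{(3-n)/(n-2)}$.

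The main obstacle is the borderline case $n=3$, where $(r'^2+\eps^2)^{-1/2}$ produces logarithmic divergences both at infinity and (in $\eps$) near zero. For $|r|\ge 1$ I would use $\int_1^{|r|}(r'^2+\eps^2)^{-1/2}\, \diff r' \le \log|r| \le C_\vartheta |r|^\vartheta$. For the integral of the residual piece (including $r \le 0$) I would bound it by $C(1+\log(1/\eps))$ using the substitution $r'=\eps s$, then absorb $\log(1/\eps) \le C_\vartheta \eps^{-\vartheta}$ into $G_\eps(r)^\vartheta$ using $G_\eps(r) \gtrsim_n (r+\eps)^{-1}$ for $r\ge 0$ and the monotonicity bound $G_\eps(r) \ge G_\eps(0) \gtrsim_n \eps^{-1}$ for $r\le 0$. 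Together these produce exactly the stated bound $C_\vartheta(1+|r|^\vartheta + G_\eps(r)^\vartheta)$, and the appearance of the free parameter $\vartheta$ is a direct consequence of the failure of the logarithm to be absorbed into any polynomial bound at $n=3$.
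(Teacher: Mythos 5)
The overall strategy is the same as the paper's: compute the pointwise bound $(F_\eps'')^2(r') \le C_n(r'^2+\eps^2)^{n/2-2}$, integrate using $(r')^{n-4}$ as a comparison, and absorb the result into a power of $G_\eps$. Where the paper proves inline the sharper lower bounds $G_\eps(r) \ge C_n r^{2-n}$ for $r \ge \eps$ and $G_\eps(r) \ge C_n \eps^{2-n}$ for $r < \eps$, you instead lean on \eqref{eq:r+eps}, which only gives $G_\eps(r) \gtrsim (r+\eps)^{2-n}$ for $r \ge 0$. This forces your integral estimates to retain the $\eps$-cutoff explicitly, and that is precisely where your argument breaks.

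Concretely, in the case $n\in(2,3)$, $r\ge 0$, you state $\int_r^\infty (r'^2+\eps^2)^{n/2-2}\,\diff r' \le C_n r^{n-3}$ (true, but lossy) and then claim that \eqref{eq:r+eps} yields $r^{n-3} \le C_n G_\eps(r)^{(3-n)/(n-2)}$. This is false: raising $(r+\eps)^{2-n} \le C_n G_\eps(r)$ to the power $\tfrac{3-n}{n-2}>0$ gives $(r+\eps)^{n-3} \le C_n G_\eps(r)^{(3-n)/(n-2)}$, and since $n-3<0$ one has $r^{n-3} \ge (r+\eps)^{n-3}$; indeed for $0\le r<\eps$ the left side diverges as $r\searrow 0$ while the right stays bounded, so the claimed inequality fails. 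The fix is to carry the cutoff: $\int_r^\infty (r'^2+\eps^2)^{n/2-2}\,\diff r' \le C_n(r+\eps)^{n-3}$ (split at $r'=\eps$), after which \eqref{eq:r+eps} applies cleanly. The same deficiency appears at $n=3$: bounding the residual by $C(1+\log(1/\eps))$ uniformly in $r$ and then absorbing $\log(1/\eps)\le C_\vartheta\eps^{-\vartheta}$ into $G_\eps(r)^\vartheta$ requires $\eps^{-\vartheta}\lesssim G_\eps(r)^\vartheta$, but for $r\ge 0$ your bound $G_\eps(r)\gtrsim(r+\eps)^{-1}$ gives only $(r+\eps)^{-\vartheta}$, which is much smaller than $\eps^{-\vartheta}$ whenever $r\gg\eps$. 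Again, tracking the $r$-dependence gives $\int_r^1(r'^2+\eps^2)^{-1/2}\,\diff r'\lesssim 1+\log(1/(r+\eps))\lesssim C_\vartheta(r+\eps)^{-\vartheta}$, and the absorption then goes through. Your $n>3$ case and the handling of $r<0$ via evenness, the global bound $\eps^{n-3}$, and monotonicity of $G_\eps$ are all sound.
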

\begin{proof}
First note that
\[
F_\eps'(r) \stackrel{\eqref{eq:def-f-eps}}{=} \tfrac n 2 r (r^2+\eps^2)^{\frac n 4 -1}, \quad F_\eps''(r) = \tfrac n 2 (r^2+\eps^2)^{\frac n 4 -1} +n( \tfrac n 4 -1) r^2 (r^2+\eps^2)^{\frac n 4 -2},
\]
so that because of $\frac{r^2}{r^2+\eps^2} \le 1$ we have
\[
(F_\eps'')^2(r) \le C_n (r^2+\eps^2)^{\frac n 2 -2}.
\]
This implies
\begin{align*}
\abs{\int_1^r (F_\eps'')^2(r') \, \diff r'} &\le C_n \int_1^r (r')^{n-4} \, \diff r' \le C_n
\begin{cases} 1+ r^{n-3} & \text{ if } n \ne 3,\\
\ln r & \text{ if } n=3,
\end{cases} 
\quad \mbox{for $r \ge 1$}, \\
\abs{\int_1^r (F_\eps'')^2(r') \, \diff r'} &\le C_n \int_{r }^1 (r')^{n-4} \, \diff r' \le C_n \begin{cases} 1 + r^{n-3} & \text{ if } n\ne 3,\\
- \ln r & \text{ if } n=3, \end{cases}
 \quad \mbox{for $\eps \le r < 1$,} \\
\abs{\int_1^r (F_\eps'')^2(r') \, \diff r'} &\le C_n \left(\int_\eps^1 (r')^{n-4} \, \diff r' + \int_{-\eps}^\eps \eps^{n-4} \, \diff r' \right) \\
&\le C_n \begin{cases} 1+\eps^{n-3} & \text{ if } n\ne 3,\\
-\ln \eps + 2 & \text{ if } n=3,\end{cases}
 \quad \mbox{for $-\eps \le r < \eps$,} \\
\abs{\int_1^r (F_\eps'')^2(r') \, \diff r'} &\le C_n \left(\int_\eps^1 (r')^{n-4} \, \diff r' + \int_{-\eps}^\eps \eps^{n-4} \, \diff r' + \int_{\eps}^{-r} (r')^{n-4} \, \diff r'\right) \\
&\le C_n \begin{cases} 1 + \eps^{n-3} + \abs{r}^{n-3} & \text{ if } n \ne 3,\\
-2\ln\eps +2+\ln(-r) & \text{ if } n=3, \end{cases} \quad \mbox{for $r < -\eps$,}
\end{align*}
so that because of $\eps \in (0,1)$,
\[
\abs{\int_1^r \left(F_\eps''(r')\right)^2 \diff r'} \le C_n 
\begin{cases}
\abs{r}^{n-3} \ind_{\left\{\abs{r} \ge \eps\right\}} + 1
& \text{ if } n > 3, \\
  \abs{\ln\abs{r}} \ind_{\left\{\abs{r} \ge \eps\right\}} + (1-\ln\eps) \ind_{\left\{r < \eps\right\}} & \text{ if } n=3,\\
  \left(1+\abs{r}^{n-3}\right) \ind_{\left\{r \ge \eps\right\}} + \left(1+\eps^{n-3}\right) \ind_{\left\{r < \eps\right\}} & \text{ if } n< 3.
\end{cases}
\]
Further notice that
\[
G_\eps(r) \stackrel{\eqref{eq:def-G-eps}}{=} \int_r^\infty \int_{r'}^\infty \frac{\diff r' \, \diff r''}{\left((r'')^2 + \eps^2\right)^{\frac n 2}} \ge 2^{- \frac n 2} \int_r^\infty \int_{r'}^\infty \frac{\diff r'' \, \diff r'}{(r'')^n} \ge C_n r^{2-n} \quad \mbox{for $r \ge \eps$}
\]
and
\[
G_\eps(r) = \eps^{2-n} \int_{\frac r \eps}^\infty \int_{r'}^\infty \frac{\diff r' \, \diff r''}{\left((r'')^2 + 1\right)^{\frac n 2}} > \eps^{2-n} \int_1^\infty \int_{r'}^\infty \frac{\diff r' \, \diff r''}{\left((r'')^2 + 1\right)^{\frac n 2}} \ge C_n \eps^{2-n} \quad \mbox{for $r < \eps$},
\]
so that we may infer that \eqref{entropy_abc} holds true.
\end{proof}
\begin{lemma}\label{lem:int-energy-2}
For $\eps \in (0,1)$, $n \in (2,\infty)$, and $r \in \R$ we have
\begin{equation}\label{entropy-ux3}
\abs{(F_\eps^2)'''(r) + 4 \left((F_\eps')^2\right)'(r)} \le C_n \begin{cases} 1 + \abs{r}^{n-3} & \mbox{ if } n \in [3,\infty), \\
G_\eps^{\frac{3-n}{n-2}}(r) & \mbox{ if } n \in \left[\frac 5 2,3\right), \end{cases}
\end{equation}
where $C_n < \infty$ only depends on $n$.
\end{lemma}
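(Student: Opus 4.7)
My plan is to reduce the bound to a single pointwise estimate of the form $(r^2+\eps^2)^{(n-3)/2}$ via direct differentiation, and then convert this into the two declared forms depending on the sign of $n-3$.

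I first compute both pieces explicitly. Setting $s:=r^2+\eps^2$ so that $F_\eps^2 = s^{n/2}$ and $F_\eps' = (nr/2)\, s^{n/4-1}$, a routine differentiation gives
\[
(F_\eps^2)'''(r) = 3n(n-2)\, r\, s^{n/2-2} + n(n-2)(n-4)\, r^3 \, s^{n/2-3},
\]
\[
\bigl((F_\eps')^2\bigr)'(r) = \tfrac{n^2}{2}\, r\, s^{n/2-2} + \tfrac{n^2(n-4)}{4}\, r^3\, s^{n/2-3}.
\]
Summing (with the factor $4$ in front of the second term) and collecting powers of $s$, the coefficients combine into
\[
(F_\eps^2)'''(r) + 4\bigl((F_\eps')^2\bigr)'(r) = n(5n-6)\, r\, s^{n/2-2} + 2n(n-1)(n-4)\, r^3\, s^{n/2-3}.
\]
Since $|r|^j \le s^{j/2}$ for $j\in\{1,3\}$, both summands are dominated in absolute value by $C_n\, s^{(n-3)/2}$, which yields the unified pointwise estimate
\[
\bigl|(F_\eps^2)'''(r) + 4\bigl((F_\eps')^2\bigr)'(r)\bigr| \le C_n (r^2+\eps^2)^{(n-3)/2}.
\]

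For the first case $n \ge 3$, the exponent $(n-3)/2$ is nonnegative, so I use $(r^2+\eps^2)^{1/2} \le |r|+\eps$ to bound the right-hand side by $C_n(|r|+\eps)^{n-3}$, and then, using $\eps < 1$, conclude with $C_n(1+|r|^{n-3})$. For the second case $n \in [5/2,3)$, the exponent $(n-3)/2$ is negative, so the inequality $(|r|+\eps)^2 \le 2(r^2+\eps^2)$ flips when raised to this power and gives $(r^2+\eps^2)^{(n-3)/2} \le C_n (|r|+\eps)^{n-3}$. To finish, I invoke the bound $(r+\eps)^{2-n} \le C_n G_\eps(r)$ for $r \ge 0$, which is precisely \eqref{eq:r+eps} from the proof of Lemma~\ref{lem:lnF}, and raise both sides to the positive exponent $(3-n)/(n-2)$ to obtain $(r+\eps)^{n-3} \le C_n G_\eps(r)^{(3-n)/(n-2)}$. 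The case $r < 0$ follows from parity: the expression on the left of \eqref{entropy-ux3} is odd in $r$ (both $(F_\eps^2)'''$ and $((F_\eps')^2)'$ are odd since $F_\eps$ is even), so its modulus depends only on $|r|$, and $G_\eps$ is nonincreasing, so $G_\eps(|r|) \le G_\eps(r)$, with the inequality preserved after raising to the positive power $(3-n)/(n-2)$.

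The bulk of the argument is the explicit differentiation, which I expect to be routine. The only genuinely delicate point is keeping track of the direction of inequalities when raising to the negative exponent $(n-3)/2$ in the subcritical regime $n<3$; all the work needed to pass from $(r+\eps)^{2-n}$ to $G_\eps$ has already been done in Lemma~\ref{lem:lnF} and is simply recycled here.
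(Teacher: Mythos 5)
Your proof is correct, and the core computation is the same as in the paper: the explicit formulas for $(F_\eps^2)'''$ and $\bigl((F_\eps')^2\bigr)'$, the unified pointwise bound $C_n(r^2+\eps^2)^{(n-3)/2}$ (the paper bounds the two summands separately, you combine them first; this is cosmetic), and the immediate conclusion for $n\ge 3$.

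For $n\in[\frac52,3)$ the two proofs diverge in how they pass from $(r^2+\eps^2)^{(n-3)/2}$ to $G_\eps^{\frac{3-n}{n-2}}(r)$. You recycle the bound \eqref{eq:r+eps} established in the proof of Lemma~\ref{lem:lnF}, which reads $(r+\eps)^{2-n}\le C_nG_\eps(r)$ and was only proved there for $r\ge0$; you then cover $r<0$ by noting that your pointwise bound is even and that $G_\eps$ is nonincreasing. The paper instead derives the inequality
\[
(r^2+\eps^2)^{\frac{2-n}{2}} = (n-2)\int_r^\infty r'\,((r')^2+\eps^2)^{-\frac n2}\,\diff r' \le (n-1)(n-2)\,G_\eps(r)
\]
directly from a fresh chain of integral identities, using at each step that $r'\le\sqrt{(r')^2+\eps^2}$; this chain holds uniformly in $r\in\R$ and thus avoids the sign split entirely. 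Both routes are valid. Yours is slightly shorter and keeps the lemma tied to the earlier auxiliary bound; the paper's is self-contained and dodges the case distinction (as well as the small amount of bookkeeping needed to justify that $(|r|+\eps)^{n-3}\le C_n(r^2+\eps^2)^{(n-3)/2}$ flips correctly for the negative exponent).
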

\begin{proof}
We compute
$$
(F_\eps')^2(r) \stackrel{\eqref{eq:def-f-eps}}{=} \tfrac{n^2}{4} \, r^2 \, (r^2+\eps^2)^{\frac n 2 -2} $$
which implies that 
$$
 \left((F_\eps')^2\right)'(r) = \tfrac{n^2}{2} \, r \, (r^2+\eps^2)^{\tfrac n 2 -2} + (\tfrac{n^3}{4}-n^2) \, r^3 \, (r^2 + \eps^2)^{\frac n 2 -3},
$$
so that
\[
\left| \left((F_\eps')^2\right)'(r) \right| \le C_n r(r^2+\eps^2)^{\frac{n}{ 2}-2} \le C_n (r^2+\eps^2)^{\frac{n-3}{2}}.
\]
Furthermore,
$$
 (F_\eps^2)'(r) = n r \, (r^2+\eps^2)^{\frac n 2 -1} 
$$
which gives 
$$
 (F_\eps^2)''(r) = n \, (r^2+\eps^2)^{\frac n 2 -1} + n(n-2) \, r^2 \, (r^2+\eps^2)^{\frac n 2 -2} 
 $$
 and 
 $$ (F_\eps^2)'''(r) = 3n(n-2) \, r \, (r^2+\eps^2)^{\frac n 2 -2} +n(n-2)(n-4) \, r^3 \, (r^2+\eps^2)^{\frac n 2 -3},
$$
whence
\[
\abs{(F_\eps^2)'''(r)} \le C_n (r^2+\eps^2)^{\frac{n-3}{2}}.
\]
Because of $(r^2+\eps^2)^{\frac{n-3}{2}} \le C_n \left(1 + \abs{r}^{n-3}\right)$, the first part of \eqref{entropy-ux3} is immediate. For $n \in \left(\frac 5 2,3\right)$ we use
\begin{eqnarray*}
(r^2+\eps^2)^{\frac{2-n}{2}} &=& \left(n-2\right) \int_r^\infty r' \left((r')^2+\eps^2\right)^{-\frac n 2} \diff r' \le  \left(n-2\right) \int_r^\infty \left((r')^2+\eps^2\right)^{\frac{1-n}{2}} \diff r' \\
&=& (n-2) \, (n-1) \int_r^\infty \int_{r'}^\infty r'' \left((r'')^2+\eps^2\right)^{-\frac{n+1}{2}} \diff r'' \, \diff r' \\
&\stackrel{\eqref{eq:def-f-eps}}{\le}& (n-2) \, (n-1) \int_r^\infty \int_{r'}^\infty \frac{1}{F_\eps^2(r'')} \, \diff r'' \, \diff r' \stackrel{\eqref{eq:def-G-eps}}{=} (n-2) \, (n-1) \, G_\eps(r), 
\end{eqnarray*}
so that \eqref{entropy-ux3} also holds true in this parameter range, too.
\end{proof}
\begin{lemma}\label{lem:energy-estimate}
Suppose $n \in \left[\frac 8 3, 4\right)$, $T \in (0,\infty)$, $p \ge 1$,  $u^{(0)} \in L^p\left(\Omega;\cF_0,\P;H^1(\bT)\right)$,  $\eps \in (0,1]$ and let $q>1$ such that $q \ge \max\left\{\frac{1}{4-n},\frac{n-2}{2n-5}\right\}$. Then, for any weak solution of problem~\eqref{eq:problem-eps-R-dependent} in the sense of Definition~\ref{def:weak-sol-eps-R} it holds
\begin{eqnarray}
\lefteqn{\hat\E \left[\sup_{t \in [0,T]} \norm{\D_x \hat u_{\eps,R}(t)}_{L^2(\bT)}^p + \norm{F_\eps(\hat u_{\eps,R}) \, \D_x^3 \hat u_{\eps,R}}_{L^2(Q_T)}^p\right]} \nonumber \\  
&\le&C  \ \hat\E \left(1 +| \A(\hat u_{\eps, R}^{(0)})|^{\frac{np}{2}}+  \norm{\D_x \hat u^{(0)}}_{L^2(\bT)}^p\right) \nonumber \\
&& + C \left(\hat\E \sup_{t \in [0,T]} \norm{G_\eps(\hat u_{\eps,R}(t))}_{L^1(\bT)}^{p} \ind_{\left[\frac 8 3,3\right]}(n) + \norm{\D_x^2 \hat u_{\eps,R}}_{L^2(Q_T)}^{2pq}\right), \label{eq:energy-est-eps}
\end{eqnarray}
where $C< \infty$ is a constant depending only on $p$, $q$, $\sigma = (\sigma_k)_{k \in \Z}$, $n$,  $L$, and $T$.
\end{lemma}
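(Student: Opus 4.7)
\medskip

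\noindent\textbf{Proof proposal.} The plan is to apply It\^o's formula to $t \mapsto \tfrac{1}{2}\|\D_x \hat u_{\eps,R}(t)\|_{L^2(\bT)}^2$ exactly as in the proof of Lemma~\ref{lem:energy-R-dependent}, only this time in the infinite-dimensional setting and without the projection $\Pi_N$. Since $F_\eps(r)\ge\eps^{n/2}>0$, Definition~\ref{def:weak-sol-eps-R}\,\eqref{item:integrability} upgrades to $\hat u_{\eps,R}\in L^2(0,T;H^3(\bT))$, $\diff\hat\P$-a.s., which justifies the integration-by-parts identities derived there. The outcome is the same decomposition as in the Galerkin case: the dissipation term $-\|F_\eps(\hat u)\D_x^3 \hat u\|_{L^2(Q_t)}^2$ with the good sign, four spatial integrals weighted by $\gamma_{\hat u}^2\le 1$ and involving $(\D_x\hat u)^4$, $(\D_x\hat u)^3$, $(\D_x\hat u)^2$, and $F_\eps^2(\hat u)$, plus a stochastic martingale~$M(t)$.

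The heart of the argument is to estimate the quartic and cubic integrands \emph{uniformly in $\eps$}. For the quartic piece, I would set $\Phi_\eps(r):=\int_1^r (F_\eps''(r'))^2\,\diff r'$ and integrate by parts to obtain
\[
\int_{\bT}\sigma_k^2\,(F_\eps'')^2(\hat u)\,(\D_x\hat u)^4\,\diff x
= -\int_{\bT}\partial_x\bigl(\sigma_k^2(\D_x\hat u)^3\bigr)\Phi_\eps(\hat u)\,\diff x,
\]
then bound $|\Phi_\eps(\hat u)|$ by Lemma~\ref{lem:int-energy}, which in each of the three cases $n>3$, $n=3$, $n\in[\tfrac83,3)$ yields an expression controlled by $\|\hat u\|_{L^\infty}^{n-3}$ (or by a small power of $\|G_\eps(\hat u)\|_{L^1(\bT)}$ via H\"older). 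To convert these into $\D_x$- and $\D_x^2$-norms, I would use the 1D Gagliardo--Nirenberg inequality $\|\D_x\hat u\|_{L^4}^4\le C\|\D_x\hat u\|_{L^2}^3\|\D_x^2\hat u\|_{L^2}$ together with the Sobolev embedding $\|\hat u\|_{L^\infty}\le C(\|\D_x\hat u\|_{L^2}+|\A(\hat u^{(0)})|)$ and mass conservation from Remark~\ref{rem:conservation-of-mass}. An analogous manipulation, using Lemma~\ref{lem:int-energy-2}, handles the cubic term, while the quadratic and $F_\eps^2$ integrals are lower order. A final Young splitting then distributes each contribution between (i) a small multiple of $\|F_\eps(\hat u)\D_x^3\hat u\|_{L^2(Q_T)}^2$ to be absorbed on the left, (ii) a factor of $\|\D_x^2\hat u\|_{L^2(Q_T)}^{2pq}$ to be kept on the right, and (iii) a polynomial in $\|\D_x\hat u\|_{L^2(\bT)}^2$ and (when $n\in[\tfrac83,3]$) $\|G_\eps(\hat u)\|_{L^1(\bT)}$ to be treated by Gr\"onwall.

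Once the pointwise-in-$t$ inequality is established, I would introduce the stopping time
\[
\tau_m:=\inf\Bigl\{t\in[0,T]:\,\|\D_x\hat u_{\eps,R}(t)\|_{L^2(\bT)}^2+\int_0^t\|F_\eps(\hat u_{\eps,R})\D_x^3\hat u_{\eps,R}\|_{L^2(\bT)}^2\,\diff t'>m\Bigr\}\wedge T,
\]
take $\sup_{t\le\tau_m}$, raise to the $p$-th power, and take expectation. The martingale $M$ is treated by Burkholder--Davis--Gundy exactly as in the proof of Lemma~\ref{lem:energy-R-dependent}; its quadratic variation produces $\|\gamma_{\hat u}F_\eps(\hat u)\D_x^3\hat u\|_{L^2(Q_T)}^2$, half of which absorbs into the dissipation. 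The deterministic remainder is then of Gr\"onwall type in $t\mapsto\E\sup_{s\le t\wedge\tau_m}\|\D_x\hat u_{\eps,R}(s)\|_{L^2(\bT)}^{2p}$, and one finally lets $m\to\infty$ via Fatou.

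The main obstacle is a careful book-keeping of the fractional exponents in the Gagliardo--Nirenberg/Young splittings. The lower threshold $n\ge\tfrac83$ is exactly what guarantees that the factor $\|\hat u\|_{L^\infty}^{n-3}\|\D_x\hat u\|_{L^2}^{3/2}\|\D_x^2\hat u\|_{L^2}^{1/2}$ arising for $n>3$ (and its $G_\eps$-flavoured analogue for $n<3$) can be split by Young's inequality into a dissipative part absorbable on the left and a $\|\D_x^2\hat u\|_{L^2(Q_T)}^{2pq}$-part absorbable by the entropy estimate of Lemma~\ref{lem:entropy}. The lower bounds $q\ge\tfrac{1}{4-n}$ and $q\ge\tfrac{n-2}{2n-5}$ correspond respectively to the $n\ge\tfrac83$ (via $n-3$) and the $n\in[\tfrac52,3)$ (via the power $\tfrac{3-n}{n-2}$ appearing in Lemmas~\ref{lem:int-energy}--\ref{lem:int-energy-2}) sub-cases, and are precisely the conjugate H\"older/Young exponents needed to make the exponent arithmetic close.
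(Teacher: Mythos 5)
Your proposal follows essentially the same route as the paper's proof: It\^o's formula for $\tfrac12\|\D_x\hat u_{\eps,R}\|_{L^2}^2$, the same term-by-term decomposition, integration by parts against $\Phi_\eps(r)=\int_1^r(F_\eps'')^2$ combined with Lemmas~\ref{lem:int-energy}--\ref{lem:int-energy-2}, the interpolation $\|\D_x\hat u\|_{L^\infty}\le C\|\D_x\hat u\|_{L^2}^{1/2}\|\D_x^2\hat u\|_{L^2}^{1/2}$ (your $L^4$ Gagliardo--Nirenberg is an equivalent variant) plus mass conservation, Young's inequality to split between the dissipation and $\|\D_x^2\hat u\|_{L^2(Q_T)}^{2pq}$, and finally stopping times, Burkholder--Davis--Gundy, Gr\"onwall and Fatou. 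The only presentational difference is that the paper first bounds $\sum_k\sigma_k^2\le C_\sigma$ in $L^\infty$ and then integrates by parts (so the quartic term yields exactly $-3\int\Phi_\eps(\hat u)(\D_x\hat u)^2\D_x^2\hat u$), whereas your integration by parts keeps $\sigma_k^2$ inside and thus produces an extra cubic-type contribution $-\int\partial_x(\sigma_k^2)(\D_x\hat u)^3\Phi_\eps(\hat u)$; this is harmless since it can be handled like the other cubic term, but it is one more piece of bookkeeping that the paper avoids.
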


\begin{proof}
For convenience of the reader, we write $\hat u$ instead of $\hat u_{\eps,R}$. By It\^o's formula (see, e.g., \cite{Krylov2013})  we have
\begin{eqnarray*}
\frac 1 2 \norm{\partial_x \hat u(t)}_{L^2(\bT)}^2 &\stackrel{\eqref{eq:equation-eps-R-dependent}}{=}& \frac 1 2 \norm{\partial_x \hat u^{(0)}}_{L^2(\bT)}^2 - \int_0^t  \int_{\bT} F_\eps^2(\hat u) (\partial_x^3 \hat u)^2 \,\diff x \, \diff t' \\
&& + \sum_{k \in \Z} \int_0^t \gamma_{\hat u}^2 \int_{\bT}  \frac{1}{2}(\partial_x \hat u) \, \partial_x^2 (\sigma_k F_\eps'(\hat u) \partial_x (\sigma_k F_\eps(\hat u))) \, \diff x \, \diff t' \\
&& + \frac 1 2 \sum_{k \in \Z} \int_0^t\gamma_{\hat u}^2 \int_{\bT} \left(\partial_x^2 (\sigma_k F_\eps(\hat u))\right)^2 \diff x \, \diff t' \\
&& + \sum_{k \in \Z} \int_0^t \gamma_{\hat u} \int_{\bT} (\partial_x \hat u) \, \partial_x^2(\sigma_k F_\eps(\hat u)) \, \diff x \, \diff \hat\beta^k(t'),
\end{eqnarray*}
$\diff \hat\P$-almost surely, where we write $\gamma_{\hat u}(t) := g_R\left(\norm{\hat u(t)}_{L^\infty(\bT)}\right)$. The same reasoning as in the proof of Lemma~\ref{lem:energy-R-dependent} leads to
\begin{small}
\begin{eqnarray}
\nonumber
\lefteqn{\frac 1 2 \norm{\partial_x \hat u(t)}_{L^2(\bT)}^2} \\
\nonumber
&=& \frac 1 2 \norm{\partial_x \hat u^{(0)}}_{L^2(\bT)}^2 - \int_0^t \int_{\bT} F_\eps^2(\hat u) \, (\partial_x^3 \hat u)^2 \, \diff x \, \diff t' \\
\nonumber
&& +\frac 1 6 \sum_{k \in \Z} \int_0^t \gamma_{\hat u}^2 \int_{\bT} \sigma_k^2 \, (F_\eps'')^2(\hat u) \, (\partial_x \hat u)^4 \, \diff x \, \diff t' \\
\nonumber
&& + \frac{1}{16} \sum_{k \in \Z} \int_0^t \gamma_{\hat u}^2 \int_{\bT} \left(\partial_x (\sigma_k^2)\right) \left((F_\eps^2)'''(\hat u) + 4 \left((F_\eps')^2\right)'(\hat u)\right) (\partial_x \hat u)^3 \, \diff x \, \diff t' \\
\nonumber
&& + \frac{3}{16} \sum_{k \in \Z} \int_0^t \gamma_{\hat u}^2 \int_{\bT} \left(8 \left((\partial_x \sigma_k)^2 - \sigma_k (\partial_x^2 \sigma_k)\right) (F_\eps')^2(\hat u) + \left(\partial_x^2 (\sigma_k^2)\right) (F_\eps^2)''(\hat u)\right) (\partial_x \hat u)^2 \, \diff x \, \diff t' \\
\nonumber
&& + \frac 1 8 \sum_{k \in \Z} \int_0^t \gamma_{\hat u}^2 \int_{\bT} \left(4 \sigma_k \, \partial_x^4 \sigma_k - \partial_x^4 (\sigma_k^2)\right) F_\eps^2(\hat u) \, \diff x \, \diff t' \\
&& + \sum_{k \in \Z} \int_0^t \gamma_{\hat u} \int_{\bT} \sigma_k \, F_\eps(\hat u) \, \partial_x^3 \hat u \, \diff x \, \diff \hat\beta^k(t'), \label{energy_eps_0}
\end{eqnarray}
\end{small}
$\diff \hat\P$-almost surely.

%%%%%%%%%%%%%%%%%%%%%
%%%%%%%%%%%%%%%%%%%%%
\paragraph{$(\partial_x \hat u)^4$-term}
%%%%%%%%%%%%%%%%%%%%%
%%%%%%%%%%%%%%%%%%%%%
We first focus on estimating the term $\sum_{k \in \Z} \int_0^t \gamma_{\hat u}^2 \int_{\bT} \sigma_k^2 \, (F_\eps'')^2(\hat u) \, (\partial_x \hat u)^4 \, \diff x \, \diff t'$ and note that through integration by parts we have
\begin{eqnarray}
\lefteqn{\sum_{k \in \Z}  \int_0^t \gamma_{\hat u}^2 \int_{\bT} \sigma_k^2 \, (F_\eps'')^2(\hat u) \, (\partial_x \hat u)^4 \, \diff x \, \diff t'} \nonumber \\
&\stackrel{\eqref{ass-sigma-k}}{\le}& C_\sigma \int_0^t \int_{\bT} (F_\eps'')^2(\hat u) \, (\partial_x \hat u)^4 \, \diff x \, \diff t' \nonumber\\
&=& - 3 C_\sigma \int_0^t \int_{\bT} \left(\int_1^{\hat u} (F_\eps'')^2(r) \, \diff r\right) (\partial_x \hat u)^2 \, (\partial_x^2 \hat u) \, \diff x \, \diff t'. \label{split_vx4}
\end{eqnarray}
With help of the Cauchy-Schwarz and H\"older's inequality we have
\begin{eqnarray*}
\lefteqn{- 3 \int_{\bT}  \left(\int_1^{\hat u} \left(F_\eps''(r)\right)^2 \diff r\right) (\partial_x \hat u)^2 \, (\partial_x^2 \hat u) \, \diff x} \\
&\le& 3 \left(\int_{\bT} \left(\int_1^{\hat u} \left(F_\eps''(r)\right)^2 \diff r\right)^2 \diff x\right)^{\frac 1 2} \norm{\partial_x \hat u}_{L^\infty(\bT)}^2 \norm{\partial_x^2 \hat u}_{L^2(\bT)}.
\end{eqnarray*}
Next, since $\partial_x \hat u$ has zero average, we use that
\begin{equation}\label{interp_vxinf}
\norm{\partial_x \hat u}_{L^\infty(\bT)} \le C \norm{\partial_x \hat u}_{L^2(\bT)}^{\frac 1 2} \norm{\partial_x^2 \hat u}_{L^2(\bT)}^{\frac 1 2}.
\end{equation}
In the case $n \in \left[\frac 8 3,3\right]$, we deduce with help of Lemma~\ref{lem:int-energy} that
\begin{eqnarray*}
\lefteqn{\sum_{k \in \Z} \int_{\bT} \sigma_k^2 (F_\eps'')^2(\hat u) \, (\partial_x \hat u)^4 \, \diff x} \\
&\stackrel{\eqref{split_vx4}, \eqref{interp_vxinf}}{\le}& C_\sigma \left(\int_{\bT} \left(\int_1^{\hat u} (F_\eps'')^2(r) \, \diff r\right)^2 \diff x\right)^{\frac 1 2} \norm{\partial_x \hat u}_{L^2(\bT)} \norm{\partial_x^2 \hat u}_{L^2(\bT)}^2 \\
&\stackrel{\eqref{entropy_abc}}{\le}& C_{n,{\vartheta}} \left(1 + \norm{\hat u}_{L^1(\bT)} + \norm{G_\eps(\hat u)}_{L^1(\bT)}\right)^{\max\left\{\frac{3-n}{n-2}, {\vartheta}\right\}} \norm{\partial_x \hat u}_{L^2(\bT)} \norm{\partial_x^2 \hat u}_{L^2(\bT)}^2 \\
&\le& C_{n,{\vartheta}, L} \left(1 + \norm{\partial_x \hat u}_{L^2(\bT)}+ | \A(\hat u^{(0)})|  + \norm{G_\eps(\hat u)}_{L^1(\bT)}\right)^{\max\left\{\frac{3-n}{n-2}, {\vartheta}\right\}} \norm{\partial_x \hat u}_{L^2(\bT)} \\
&& \times \norm{\partial_x^2 \hat u}_{L^2(\bT)}^2,
\end{eqnarray*}
$\diff \hat\P$-almost surely, where ${\vartheta} > 0$ and we have applied conservation of mass (cf.~Remark~\ref{rem:conservation-of-mass}) and the second Poincar\'e inequality. Integration in time yields for any ${\vartheta} > 0$,
\begin{eqnarray*}
\lefteqn{\sum_{k \in \Z} \int_0^t \gamma_{\hat u}^2 \int_{\bT} \sigma_k^2 (F_\eps'')^2(\hat u) \, (\partial_x \hat u)^4 \, \diff x \, \diff t'} \\
&\le& C_{\sigma,n,{\vartheta}, L } \left(1 + \sup_{t' \in [0,t]} \norm{\partial_x \hat u(t')}_{L^2(\bT)} + | \A(\hat u^{(0)})| + \sup_{t' \in [0,t]} \norm{G_\eps(\hat u(t'))}_{L^1(\bT)}\right)^{\max\left\{\frac{3-n}{n-2}, {\vartheta}\right\}} \\
&& \times \sup_{t' \in [0,t]} \norm{\partial_x \hat u(t')}_{L^2(\bT)}  \norm{\partial_x^2 \hat u}_{L^2(Q_t)}^2,
\end{eqnarray*}
$\diff \hat\P$-almost surely. Applying Young's inequality and confining ${\vartheta}$ to the interval $(0,1)$ yields for any $\delta > 0$
\begin{eqnarray}\nonumber
\lefteqn{\sum_{k \in \Z} \int_0^t \gamma_{\hat u}^2 \int_{\bT} \sigma_k^2 (F_\eps'')^2(\hat u) \, (\partial_x \hat u)^4 \, \diff x \, \diff t'} \\
&\le& \delta \sup_{t' \in [0,t]} \norm{\partial_x \hat u(t')}_{L^2(\bT)}^2 \nonumber \\
&& + C_{\sigma,n,{\vartheta},\delta} \left(1 + | \A(\hat u^{(0)})| ^2 + \sup_{t' \in [0,t]} \norm{G_\eps(\hat u(t'))}_{L^1(\bT)}^2 + \norm{\partial_x^2 \hat u}_{L^2(Q_t)}^{\max\left\{\frac{4(n-2)}{2n-5}, \frac{4}{1-{\vartheta}}\right\}}\right), \nonumber \\
&& \label{eq:nl3vx44}
\end{eqnarray}
$\diff \hat\P$-almost surely.

In the case $n\in(3,4)$ we get with Lemma~\ref{lem:int-energy} and H\"older's inequality
\begin{eqnarray*}
\lefteqn{\abs{\int_{\bT}\left(\int_1^{\hat u} (F_\eps''(r))^2 \diff r\right) (\partial_x \hat u)^2 \, (\partial_x^2 \hat u) \, \diff x}} \\
&\stackrel{\eqref{entropy_abc}}{\le}& C_n \left(\intort \left(1+\abs{\hat u}^{2n-6}\right) \diff x\right)^{\frac 1 2} \norm{\D_x \hat u}_{L^{\infty}(\bT)}^2 \norm{\D_x^2 \hat u}_\Ltwo \\
&\stackrel{\eqref{interp_vxinf}}{\le}& C_{n,L} \left(1 + \norm{\hat u}_{L^\infty(\bT)}^{n-3}\right) \norm{\D_x \hat u}_\Ltwo \norm{\D_x^2 \hat u}_\Ltwo^2 \\
&\le & C_{n,L} \left(1+\norm{\D_x \hat u}_\Ltwo^{n-3}+ | \A(\hat u^{(0)})| ^{n-3}\right) \norm{\D_x \hat u}_\Ltwo \norm{\D_x^2 \hat u}_\Ltwo^2,
\end{eqnarray*}
$\diff \hat\P$-almost surely, where we have employed mass conservation (Remark~\ref{rem:conservation-of-mass}) and the Sobolev embedding in the last line. Hence,
\begin{eqnarray}
\lefteqn{\sum_{k \in \Z} \int_0^t \gamma_{\hat u}^2 \int_{\bT} \sigma_k^2 (F_\eps'')^2(\hat u) \, (\partial_x \hat u)^4 \, \diff x \, \diff t'} \nonumber \\
&\stackrel{\eqref{split_vx4}}{\le}& C_{n,L} \left(1+ \sup_{t' \in [0,t]} \norm{\D_x \hat u(t')}_\Ltwo^{n-2} +|\A(\hat u^{(0)})| ^{n-2}\right) \norm{\D_x^2 \hat u}_{L^2(Q_t)}^2 \nonumber \\
&\le& \delta \sup_{t' \in [0,t]} \norm{\D_x \hat u(t')}_\Ltwo^2 + C_{n,L,\delta} \left(1+ |\A(\hat u^{(0)})| ^2+ \norm{\D_x^2 \hat u}_{L^2(Q_t)}^{\frac{4}{4-n}}\right),
\label{eq:ng3vx42}
\end{eqnarray}
$\diff \hat\P$-almost surely, where we have used Young's inequality in the last estimate. Combining \eqref{eq:nl3vx44} and \eqref{eq:ng3vx42}, we end up with the bound
\begin{eqnarray}
\lefteqn{\sum_{k \in \Z} \int_0^t \gamma_{\hat u}^2 \int_{\bT} \sigma_k^2 (F_\eps'')^2(\hat u) \, (\partial_x \hat u)^4 \, \diff x \, \diff t'} \nonumber \\
&\le& \delta \sup_{t' \in [0,T]} \norm{\D_x \hat u(t')}_\Ltwo^2 + C_{\sigma,n,\delta, {\vartheta} } \left(1 +|\A(\hat u^{(0)})| ^2+ \sup_{t' \in [0,t]} \norm{G_\eps(\hat u(t'))}_{L^1(\bT)}^2 \ind_{\left[\frac 8 3,3\right]}(n)\right) \nonumber \\
&&  + C_{\sigma,n,\delta, {\vartheta} } \norm{\D_x^2 \hat u}_{L^2(Q_t)}^{\max\left\{\frac{4}{4-n},\frac{4(n-2)}{2n-5},\frac{4}{1-{\vartheta}}\right\}}, \label{eq:ng3vx44}
\end{eqnarray}
$\diff \hat\P$-almost surely, where $\delta > 0$ and ${\vartheta} \in (0,1)$ are free parameters.

%%%%%%%%%%%%%%%%%%%%%
%%%%%%%%%%%%%%%%%%%%%
\paragraph{$(\partial_x \hat u)^3$-term}
%%%%%%%%%%%%%%%%%%%%%
%%%%%%%%%%%%%%%%%%%%%
We have
\begin{eqnarray*}
\lefteqn{\sum_{k \in \Z} \int_0^t \gamma_{\hat u}^2 \int_{\bT} \left(\partial_x (\sigma_k^2)\right) \left((F_\eps^2)'''(\hat u) + 4 \left((F_\eps')^2\right)'(\hat u)\right) (\partial_x \hat u)^3 \, \diff x \, \diff t'} \\
&\stackrel{\eqref{ass-sigma-k}}{\le}& C_\sigma \int_0^t  \int_{\bT} \abs{(F_\eps^2)'''(\hat u) + 4 \left((F_\eps')^2\right)'(\hat u)} \abs{\partial_x \hat u}^3 \diff x \, \diff t'.
\end{eqnarray*}
This implies
\begin{eqnarray*}
\lefteqn{\int_{\bT} \abs{(F_\eps^2)'''(\hat u) + 4 \left((F_\eps')^2\right)'(\hat u)} \abs{\partial_x \hat u}^3 \, \diff x} \\
&\le& \int_{\bT} \abs{(F_\eps^2)'''(\hat u) + 4 \left((F_\eps')^2\right)'(\hat u)} \diff x \norm{\D_x \hat u}_{L^\infty(\bT)}^3 \\
&\stackrel{\eqref{interp_vxinf}}{\le}& C \int_{\bT} \abs{(F_\eps^2)'''(\hat u) + 4 \left((F_\eps')^2\right)'(\hat u)} \diff x \norm{\D_x \hat u}_{L^2(\bT)}^{\frac 3 2} \norm{\D_x^2 \hat u}_{L^2(\bT)}^{\frac 3 2}
\end{eqnarray*}
and Lemma~\ref{lem:int-energy-2} yields
\[
\int_{\bT} \abs{(F_\eps^2)'''(\hat u) + 4 \left((F_\eps')^2\right)'(\hat u)} \diff x \le C_{n,L} \begin{cases} \left(1 + \int_{\bT} \abs{\hat u}^{n-3} \diff x\right) & \mbox{ if } n \in [3,4), \\
\int_{\bT} G_\eps^{\frac{3-n}{n-2}}(\hat u) \, \diff x & \mbox{ if } n \in \left[ \tfrac 8 3, 3\right), \end{cases}
\]
i.e.,
\[
\int_{\bT} \abs{(F_\eps^2)'''(\hat u) + 4 \left((F_\eps')^2\right)'(\hat u)} \diff x \le C_{n,L} \begin{cases} \left(1 + \norm{\hat u}_{L^1(\bT)}^{n-3}\right) & \mbox{ if } n \in [3,4), \\
\norm{G_\eps(\hat u)}_{L^1(\bT)}^{\frac{3-n}{n-2}} & \mbox{ if } n \in \left[\tfrac 8 3, 3\right). \end{cases}
\]
We further use
\begin{equation}\label{interp_vx_v_vxx}
\norm{\partial_x \hat u}_{L^2(\bT)}^2 = \int_{\bT} (\partial_x \hat u)^2 \, \diff x = - \int_{\bT} \hat u \, \partial_x^2 \hat u \, \diff x \le \norm{\hat u}_{L^2(\bT)} \, \norm{\partial_x^2 \hat u}_{L^2(\bT)}.
\end{equation}

For $n \in [3,4)$, by employing mass conservation (Remark~\ref{rem:conservation-of-mass}) and the Sobolev embedding, we get for any $\delta > 0$
\begin{eqnarray*}
\lefteqn{\int_0^t  \int_{\bT} \abs{(F_\eps^2)'''(\hat u) + 4 \left((F_\eps')^2\right)'(\hat u)} \abs{\partial_x \hat u}^3 \diff x \, \diff t'} \\
&\le& C_{n,L} \int_0^t \left(1 + \norm{\hat u(t')}_{L^1(\bT)}^{n-3}\right) \norm{\D_x \hat u(t')}_{L^2(\bT)}^{\frac 3 2} \norm{\D_x^2 \hat u(t')}_{L^2(\bT)}^{\frac 3 2} \diff t' \\
&\stackrel{\eqref{interp_vx_v_vxx}}{\le}& C_{n,L} \int_0^t \left(1 + \norm{\hat u(t')}_{L^1(\bT)}^{n-3}\right) \norm{u(t')}_{L^2(\bT)}^{\frac 1 2} \norm{\D_x \hat u(t')}_{L^2(\bT)}^{\frac 1 2} \norm{\D_x^2 \hat u(t')}_{L^2(\bT)}^2 \diff t' \\
&\le& C_{n,L} \left(1 + \sup_{t' \in [0,t]} \norm{\D_x \hat u(t')}_{L^2(\bT)}^{n-2}+ |\A(\hat u^{(0)})| ^{n-2} \right) \norm{\D_x^2 \hat u}_{L^2(Q_t)}^2 \\
&\le& \delta  \sup_{t' \in [0,t]} \norm{\D_x \hat u(t')}_{L^2(\bT)}^2 + C_{n,L,\delta}\left(1+|\A(\hat u^{(0)})| ^2+ \norm{\D_x^2 \hat u}_{L^2(Q_t)} ^{\frac{4}{4-n}} \right) ,
\end{eqnarray*}
$\diff \hat\P$-almost surely, where we have applied Young's inequality in the last step.

For $n \in \left[\frac 8 3, 3\right)$, with an analogous reasoning we obtain for any $\delta > 0$
\begin{eqnarray*}
\lefteqn{\int_0^t  \int_{\bT} \abs{(F_\eps^2)'''(\hat u) + 4 \left((F_\eps')^2\right)'(\hat u)} \abs{\partial_x \hat u}^3 \diff x \, \diff t'} \\
&\le& C_{n,L} \int_0^t \norm{G_\eps(\hat u(t'))}_{L^1(\bT)}^{\frac{3-n}{n-2}} \norm{\D_x \hat u(t')}_{L^2(\bT)}^{\frac 3 2} \norm{\D_x^2 \hat u(t')}_{L^2(\bT)}^{\frac 3 2} \diff t' \\
&\stackrel{\eqref{interp_vx_v_vxx}}{\le}& C_{n,L} \int_0^t \norm{G_\eps(\hat u(t'))}_{L^1(\bT)}^{\frac{3-n}{n-2}} \norm{u(t')}_{L^2(\bT)}^{\frac 1 2} \norm{\D_x \hat u(t')}_{L^2(\bT)}^{\frac 1 2} \norm{\D_x^2 \hat u(t')}_{L^2(\bT)}^2 \diff t' \\
&\le& C_{n,L} \, \sup_{t' \in [0,t]} \norm{G_\eps(\hat u(t'))}_{L^1(\bT)}^{\frac{3-n}{n-2}} \left(1 + \sup_{t' \in [0,t]} \norm{\D_x \hat u(t')}_{L^2(\bT)}+ | \A(\hat u ^{(0)})|\right) \norm{\D_x^2 \hat u}_{L^2(Q_t)}^2 \\
&\le& \delta  \sup_{t' \in [0,t]} \norm{\D_x \hat u(t')}_{L^2(\bT)}^2 \\
&& + C_{n,L,\delta} \left(1+\sup_{t' \in [0,t]} \norm{G_\eps(\hat u(t'))}_{L^1(\bT)}^2 +| \A(\hat u ^{(0)})|^2+ \norm{\D_x^2 \hat u}_{L^2(Q_t)}^{\frac{4(n-2)}{2n-5}}\right),
\end{eqnarray*}
$\diff \hat\P$-almost surely, where we have applied Young's inequality again.

Altogether, for $n\in \left[\frac 8 3,4\right)$, we obtain
\begin{eqnarray}
\lefteqn{\sum_{k \in \Z} \int_0^t \gamma_{\hat u}^2 \int_{\bT} \left(\partial_x (\sigma_k^2)\right) \left((F_\eps^2)'''(\hat u) + 4 \left((F_\eps')^2\right)'(\hat u)\right) (\partial_x \hat u)^3 \, \diff x \, \diff t'} \nonumber \\
&\le&  \delta  \sup_{t' \in [0,t]} \norm{\D_x \hat u(t')}_{L^2(\bT)}^2 + C_{n,L,T,\delta} \left(1 +|\A(\hat u^{(0)})| ^2+ \sup_{t' \in [0,t]} \norm{G_\eps(\hat u(t'))}_{L^1(\bT)}^2 \ind_{\left[\frac 8 3,3\right]}(n)\right) \nonumber \\
&& + C_{n,L,T,\delta} \norm{\D_x^2 \hat u}_{L^2(Q_t)}^{\max\left\{\frac{4}{4-n},\frac{4(n-2)}{2n-5}\right\}}, \label{eq:ng3vx34}
\end{eqnarray}
$\diff \hat\P$-almost surely.

%%%%%%%%%%%%%%%%%%%%%
%%%%%%%%%%%%%%%%%%%%%
\paragraph{$(\partial_x \hat u)^2$-term}
%%%%%%%%%%%%%%%%%%%%%
%%%%%%%%%%%%%%%%%%%%%
We start out with
\begin{align*}
&\sum_{k \in \Z} \int_0^t \gamma_{\hat u}^2 \int_{\bT} \left(8 \left((\partial_x \sigma_k)^2 - \sigma_k (\partial_x^2 \sigma_k)\right) (F_\eps')^2(\hat u) + \left(\partial_x^2 (\sigma_k^2)\right) (F_\eps^2)''(\hat u)\right) (\partial_x \hat u)^2 \, \diff x \, \diff t' \\
&\quad \stackrel{\eqref{ass-sigma-k}}{\le} C_\sigma \int_0^t \int_{\bT} \left((F_\eps')^2(\hat u) + \abs{(F_\eps^2)''(\hat u)}\right) (\partial_x \hat u)^2 \, \diff x \, \diff t'.
\end{align*}
Next, we compute
\[
(F_\eps')^2(r) \stackrel{\eqref{eq:def-f-eps}}{=} \tfrac{ n^2}{4} \, r^2 \, (r^2+\eps^2)^{\frac n 2 -2} \les |r|^{n-2}
\]
and
\[
\abs{(F_\eps^2)''(r)} \stackrel{\eqref{eq:def-f-eps}}{=} \abs{n (r^2+\eps^2)^{\frac n 2 -1} + n(n-2) \, r^2 \, (r^2+\eps^2)^{\frac n 2 -2}} \le C_n(\abs{r}^{n-2} + 1),
\]
so that
\[
\int_{\bT} \left((F_\eps')^2(\hat u) + \abs{(F_\eps^2)''(\hat u)}\right) (\partial_x \hat u)^2 \, \diff x \le C_n \int_{\bT} \left(1+\abs{\hat u}^{n-2}\right) (\partial_x \hat u)^2 \, \diff x.
\]
Similarly as before, we estimate using conservation of mass (Remark~\ref{rem:conservation-of-mass}) and that $\D_x\hat u$ has zero average,
\begin{eqnarray*}
\lefteqn{\int_{\bT} \left(1+\abs{\hat u}^{n-2}\right) (\partial_x \hat u)^2 \, \diff x} \\
&\le& \left(1 + \norm{\hat u}_{L^\infty(\bT)}^{n-2}\right) \norm{ \partial_x \hat u}_{L^2(\bT)}^2 \\
&{\le}& C_{L} \left(1 + \norm{\D_x \hat u}_{L^2(\bT)}^{n-2}+|\A(\hat u ^{(0)} ) |^{n-2}\right)\norm{ \partial_x^2 \hat u}_{L^2(\bT)}^2,
\end{eqnarray*}
$\diff \hat\P$-almost surely, so that 
\begin{eqnarray*}
\lefteqn{\int_0^t \int_{\bT} \left(1+\abs{\hat u}^{n-2}\right) (\partial_x \hat u)^2 \, \diff x \, \diff t'} \\
&\le& C_{L,T} \left(1 + \sup_{t' \in [0,t]} \norm{\D_x \hat u(t')}_{L^2(\bT)}^{n-2}+|\A(\hat u ^{(0)} ) |^{n-2}\right) \left(1 + \norm{\D_x^2 \hat u}_{L^2(Q_t)}^2\right),
\end{eqnarray*}
$\diff \hat\P$-almost surely. Hence, by Young's inequality we arrive at
\begin{align}
&\sum_{k \in \Z} \int_0^t \gamma_{\hat u}^2 \int_{\bT} \left(8 \left((\partial_x \sigma_k)^2 - \sigma_k (\partial_x^2 \sigma_k)\right) (F_\eps')^2(\hat u) + \left(\partial_x^2 (\sigma_k^2)\right) (F_\eps^2)''(\hat u)\right) (\partial_x \hat u)^2 \, \diff x \, \diff t' \nonumber \\
& \quad \le \delta\sup_{t' \in [0,t]}\norm{\D_x \hat u(t')}_{\Ltwo}^2+ C_{\sigma,L,T,\delta} \left(1+|\A(\hat u ^{(0)} ) |^2+\norm{\D_x^2 \hat u}_{L^2(Q_t)}^{\frac{4}{4-n}}\right), \label{eq:ng3vx24}
\end{align} 
$\diff \hat\P$-almost surely, for any $\delta > 0$.

%%%%%%%%%%%%%%%%%%%%%
%%%%%%%%%%%%%%%%%%%%%
\paragraph{$(\partial_x \hat u)^0$-term}
%%%%%%%%%%%%%%%%%%%%%
%%%%%%%%%%%%%%%%%%%%%
We first use
\[
\sum_{k \in \Z} \int_0^t \gamma_{\hat u}^2 \int_{\bT} \left(4 \sigma_k \, \partial_x^4 \sigma_k - \partial_x^4 (\sigma_k^2)\right) F_\eps^2(\hat u) \, \diff x \, \diff t' \stackrel{\eqref{ass-sigma-k}}{\le} C_\sigma \int_0^t \int_{\bT} F_\eps^2(\hat u) \, \diff x \, \diff t'
\]
and
\[
\int_0^t  \int_{\bT} F_\eps^2(\hat u) \, \diff x \, \diff t' \stackrel{\eqref{eq:def-f-eps}}{\le} C_{L,T} \left(1 + \int_0^t  \int_{\bT} \abs{\hat u}^n \diff x \, \diff t'\right).
\]
Then, we estimate, using conservation of mass (Remark~\ref{rem:conservation-of-mass}) and the Sobolev embedding,
\begin{eqnarray*}
\int_{\bT} \abs{\hat u}^n \diff x &\le& C_{L} \norm{\hat u}_{L^\infty(\bT)}^n \le C_{L,n} \left(1 + \norm{\D_x \hat u}_{L^2(\bT)}^n+ |\A(\hat u ^{(0)} ) |^n \right) \\
&\le& C_{L,n} \left(1 + \norm{\D_x \hat u}_{L^2(\bT)}^{n-2} \| \D^2_x \hat u \|_{L^2(\bT)}^2 + |\A(\hat u ^{(0)} ) |^n\right) 
\end{eqnarray*}
$\diff \hat\P$-almost surely, where we have also employed that $\D_x \hat u$ has zero average. Integrating in time yields
\[
\int_0^t  \int_{\bT} \abs{\hat u}^n \diff x \, \diff t' \le C_{L,T,n} \left(1 + \sup_{t' \in [0,t]} \norm{\D_x \hat u(t')}_{L^2(\bT)}^{n-2} \norm{\D_x^2 \hat u}_{L^2(Q_t)}^2+|\A(\hat u ^{(0)} ) |^n \right),
\]
$\diff \hat\P$-almost surely, so that by Young's inequality it follows that for any $\delta > 0$,
\begin{align}
&\sum_{k \in \Z} \int_0^t \gamma_{\hat u}^2 \int_{\bT} \left(4 \sigma_k \, \partial_x^4 \sigma_k - \partial_x^4 (\sigma_k^2)\right) F_\eps^2(\hat u) \, \diff x \, \diff t' \nonumber \\
& \quad \le \delta\sup_{t' \in [0,t]}\norm{\D_x \hat u(t')}_{\Ltwo}^2+ C_{\sigma,L,T,\delta} \left(1+|\A(\hat u ^{(0)} ) |^n +\norm{\D_x^2 \hat u}_{L^2(Q_t)}^{\frac{4}{4-n}}\right), \label{eq:ng0vx24}
\end{align} 
$\diff \hat\P$-almost surely.

%%%%%%%%%%%%%%%%%%%%%
%%%%%%%%%%%%%%%%%%%%%
\paragraph{Closing the estimate}
%%%%%%%%%%%%%%%%%%%%%
%%%%%%%%%%%%%%%%%%%%%
Inserting all the previous estimates \eqref{eq:ng3vx44}, \eqref{eq:ng3vx34}, \eqref{eq:ng3vx24}, and \eqref{eq:ng0vx24} in \eqref{energy_eps_0} and choosing $\delta$ sufficiently small and an appropriate ${\vartheta}$, we arrive for $n \in \left[\frac 8 3,4\right)$ at 
  \begin{align}\label{eq:before-closing}
    \begin{split}
&\norm{\D_x \hat u(t)}_{L^2(\bT)}^2 + \int_0^t \int_{\bT} F_\eps^2(\hat u) \, (\D_x^3 \hat u)^2 \, \diff x \, \diff t' \\
&\qquad\le 2 \norm{\D_x \hat u^{(0)}}_{L^2(\bT)}^2 + 2 \abs{M(t)} \\
      & \qquad\quad + C_{q,\sigma,n,L,T,\delta} \left(1 + |\A(\hat u ^{(0)} ) |^n+\sup_{t' \in [0,t]} \norm{G_\eps(\hat u(t'))}_{L^1(\bT)}^{2} \ind_{\left[\frac 8 3,3\right]}(n) + \norm{\D_x^2 \hat u}_{L^2(Q_t)}^{4q}\right),
      \end{split}
\end{align}
$\diff \hat\P$-almost surely, where $q \ge \max\left\{\frac{1}{4-n},\frac{n-2}{2n-5}\right\}$ and $q > 1$ with a constant $C_{q,\sigma,n,,L,T,\delta} < \infty$. Here,
\[
M(t) :=\sum_{ k \in \mathbb{Z}} \int_0^t \gamma_{\hat u} \int_{\bT} \sigma_k F_\eps(\hat u) \, \D^3_x \hat u \, \diff x \, \diff \hat\beta^k(t')
\]
denotes the martingale in the last line of \eqref{energy_eps_0}. Let us set as usual 
$$
\tau_m = \inf \{ t >0 :  \int_0^t \int_{\bT}  F_\eps (\hat u ) (\D_x^3 \hat u )^2 \, \diff x \, \diff t' >m \} \wedge T 
$$
We now discard the second term on the left-hand side of \eqref{eq:before-closing}, we take suprema in time up to $ T \wedge \tau_m $, we raise to the power $\frac p 2$, and we take expectation to obtain with help of the Burkholder-Davis-Gundy inequality
\begin{eqnarray}\nonumber   
\lefteqn{\hat\E \sup_{t \in [0,T]} \norm{\D_x \hat u(t\wedge \tau_m)}_{L^2(\bT)}^p} \\
&\le& C_p \left(\hat\E \norm{\D_x \hat u^{(0)}}_{L^2(\bT)}^p + \hat\E \langle M \rangle_{T \wedge \tau_m} ^{\frac p 4}\right) \nonumber \\
&& + C_{p,q,\sigma,n,L,T} \left(1 + \hat\E \sup_{t \in [0,T]} \norm{G_\eps(\hat u(t))}_{L^1(\bT)}^{p} \ind_{\left[\frac 8 3,3\right]}(n) + \norm{\D_x^2 \hat u}_{L^2(Q_T)}^{2pq}\right), \nonumber \\
&& \label{eq:before-closing-1}
\end{eqnarray}
$\diff \hat\P$-almost surely. Next, notice that
\begin{equs}\label{eq:qud}
\E \langle M \rangle_{T \wedge \tau_m}^{\frac p 4} \stackrel{\eqref{eq:winf-sigma-k}}{\le} C_p \, \hat\E \left( \int_0^{T \wedge \tau_m} \int_{\bT} F_\eps^2(\hat u) \, (\D_x^3 \hat u)^2 \, \diff x \, \diff t  \right)^{\frac p 4}.
\end{equs}

Now we go back to \eqref{eq:before-closing}, we discard the first term at the left-hand side and we conclude that 
\begin{align*}    
& \E \left(\int_0^{T \wedge \tau_m} \int_{\bT} F_\eps^2(\hat u) \, (\D_x^3 \hat u)^2 \, \diff x \, \diff t \right)^{\frac p 2} \\
&\ \ \quad\le \ \ C_p \left(\hat\E \norm{\D_x \hat u^{(0)}}_{L^2(\bT)}^p + \hat\E \langle M \rangle_{T \wedge \tau_m}^{\frac p 4}\right) \\
&\ \ \quad\phantom{\le} \ \ + C_{p,q,\sigma,n,L,T} \left(1 +|\A(\hat u ^{(0)} ) |^n+ \hat\E \sup_{t \in [0,T]} \norm{G_\eps(\hat u(t))}_{L^1(\bT)}^{p} \ind_{\left[\frac 8 3,3\right]}(n) + \norm{\D_x^2 \hat u}_{L^2(Q_T)}^{2pq}\right) \\
&\quad\stackrel{\eqref{eq:qud}}{\le} C_p \left(\hat\E \norm{\D_x \hat u^{(0)}}_{L^2(\bT)}^p + \E \left(\int_0^{T \wedge \tau_m} \int_{\bT} F_\eps^2(\hat u) \, (\D_x^3 \hat u)^2 \, \diff x \, \diff t \right)^{\frac p 4}\right) \\
&\quad \phantom{\stackrel{\eqref{eq:qud}}{\le}} + C_{p,q,\sigma,n,L,T} \left(1 +|\A(\hat u ^{(0)} ) |^{\frac{np}{2}}+ \hat\E \sup_{t \in [0,T]} \norm{G_\eps(\hat u(t))}_{L^1(\bT)}^{p} \ind_{\left[\frac 8 3,3\right]}(n) + \norm{\D_x^2 \hat u}_{L^2(Q_T)}^{2pq}\right) ,
\end{align*}
$\diff \hat\P$-almost surely. From this it follows with Young's inequality that 
\begin{eqnarray*}    
\lefteqn{\E \left(\int_0^{T \wedge \tau_m} \int_{\bT} F_\eps^2(\hat u) \, (\D_x^3 \hat u)^2 \, \diff x \, \diff t \right)^{\frac p 2}} \\  
&\le&C_{p,q,\sigma,n,L,T} \left(1 +|\A(\hat u ^{(0)} ) |^{\frac{np}{2}}+ \hat\E \norm{\D_x \hat u^{(0)}}_{L^2(\bT)}^p\right) \\
&& + C_{p,q,\sigma,n,L,T} \left(\hat\E \sup_{t \in [0,T]} \norm{G_\eps(\hat u(t))}_{L^1(\bT)}^{p} \ind_{\left[\frac 8 3,3\right]}(n) + \norm{\D_x^2 \hat u}_{L^2(Q_T)}^{2pq}\right),
\end{eqnarray*}
which combined with \eqref{eq:before-closing-1} and \eqref{eq:qud} yields by virtue of Fatou's lemma
\begin{eqnarray*}    
\lefteqn{\hat\E \left[\sup_{t \in [0,T]} \norm{\D_x \hat u(t)}_{L^2(\bT)}^p + \left(\int_0^T \int_{\bT} F_\eps^2(\hat u) \, (\D_x^3 \hat u)^2 \, \diff x \, \diff t \right)^{\frac p 2}\right]} \\  
&\le&C_{p,q,\sigma,n,L,T} \left(1 + |\A(\hat u ^{(0)} ) |^{\frac{np}{2}}+\hat\E \norm{\D_x \hat u^{(0)}}_{L^2(\bT)}^p\right) \\
&& + C_{p,q,\sigma,n,L,T} \left(\hat\E \sup_{t \in [0,T]} \norm{G_\eps(\hat u(t))}_{L^1(\bT)}^{p} \ind_{\left[\frac 8 3,3\right]}(n) + \norm{\D_x^2 \hat u}_{L^2(Q_T)}^{2pq}\right),
\end{eqnarray*}
which was stated in \eqref{eq:energy-est-eps}.
\end{proof}
%

%%%%%%%%%%%%%%%%%%%%%
%%%%%%%%%%%%%%%%%%%%%
\subsection{Passage to the Limit to remove the Cut Off}
%%%%%%%%%%%%%%%%%%%%%
%%%%%%%%%%%%%%%%%%%%%
In this section we consider the SPDE
\begin{subequations}\label{eq:approx-equation-2}
\begin{eqnarray}
\diff u_{\eps} &=& \D_x\left(-F_\eps^2(u_{\eps}) \D_x^3u_{\eps}\right) \diff t +\frac 1 2 \sum_{k \in \Z}\partial_x \left(\sigma_k  F_\eps'(u_{\eps}) \partial_x(\sigma_k  F_\eps(u_{\eps}))\right) \diff t \nonumber \\           
&& + \sum_{k \in \Z} \left(\partial_x\left(\sigma_k  F_\eps(u_{\eps})\right)\right) \diff \beta^k,
\label{eq:equation-approx-equation-2}\\
u_{\eps}(0,\cdot) &=& u^{(0)}.
\end{eqnarray}
\end{subequations}
The definition of a  weak solution $\{ (\check \Omega, \check\cF,\check\F,\check \P), \ (\check\beta_k)_{k \in \Z},\ \check u^{(0)},  \check u \}$ of equation 
\eqref{eq:approx-equation-2} is covered by Definition~\ref{def:weak-sol-eps-R}
taking for $g_R$ the function $g_\infty\equiv 1.$

\begin{proposition}\label{prop:weak-eps}
Suppose that $n \in \left[\frac 8 3,4\right)$, $\mathfrak{p} \ge n+2$, $\eps\in (0,1)$,  and $q > 1$ satisfying $q \ge \max\left\{\frac{1}{4-n},\frac{n-2}{2n-5}\right\}$. Suppose that  $u^{(0)} \in L^{\mathfrak{p}}\left(\Omega,\mathcal{F}_0,\P;H^1(\bT)\right)$ such that 
\begin{equs}
\mathcal{K}(u^{(0)}, \mathfrak{p}, q, \eps):= \E\abs{\A(u^{(0)})}^{2\mathfrak{p}q} + \E \norm{G_\eps (u^{(0)})}_{L^1(\bT)}^{\mathfrak{p}q}+ \E \norm{\D_x u^{(0)}}^{\mathfrak{p}} < \infty. 
\end{equs}
Then there exists a weak solution $\{ (\check \Omega, \check\cF,\check\F,\check \P), \ (\check\beta_k)_{k \in \Z},\ \check u^{(0)}_\eps,  \check u_\eps  \}$ to \eqref{eq:approx-equation-2} in the sense of Definition~\ref{def:weak-sol-eps-R}, satisfying 
\begin{align}
& \check \E \left[\sup_{t \in [0,T]} \norm{\D_x \check u_\eps(t)}_{L^2(\bT)}^{\mathfrak{p}} + \norm{F_\eps(\check u_\eps) \D_x^3 \check u_\eps}_{L^2(Q_T)}^{\mathfrak{p}} + \sup_{t \in [0,T]} \norm{G_\eps(\check u_\eps(t))}_{L^1(\bT)}^{\mathfrak{p}q} + \norm{\D_x^2\check  u_\eps}_{L^2(Q_T)}^{2\mathfrak{p}q}\right] \nonumber \\  
& \quad \le C \left(1 + \mathcal{K}(u^{(0)}, \mathfrak{p}, q, \eps)\right), \label{eq:energy-entropy}
\end{align}
where  $C < \infty$ is a constant depending only on $\mathfrak{p}$, $q$, $\sigma = (\sigma_k)_{k \in \Z}$, $n$, $L$, and $T$.
\end{proposition}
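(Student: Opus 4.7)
The plan is to apply Proposition~\ref{prop:weak-eps-r} for every $R \in (0,\infty)$ to obtain a weak solution $\hat u_{\eps,R}$ of the truncated problem \eqref{eq:problem-eps-R-dependent}, derive $R$-uniform moment bounds by combining the entropy and energy estimates from Lemmas~\ref{lem:entropy} and~\ref{lem:energy-estimate}, and then pass to the limit $R \to \infty$ using the same stochastic-compactness machinery employed in the proof of Proposition~\ref{prop:weak-eps-r}. The output will be a weak solution $\check u_\eps$ to the un-truncated problem \eqref{eq:approx-equation-2} satisfying \eqref{eq:energy-entropy}.

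Since Lemmas~\ref{lem:entropy} and~\ref{lem:energy-estimate} apply to any weak solution in the sense of Definition~\ref{def:weak-sol-eps-R} with constants independent of $R$, I first apply the entropy estimate at exponent $\mathfrak{p}q$, which controls $\hat\E \sup_t \norm{G_\eps(\hat u_{\eps,R}(t))}_{L^1(\bT)}^{\mathfrak{p}q}$ and $\hat\E \norm{\D_x^2 \hat u_{\eps,R}}_{L^2(Q_T)}^{2\mathfrak{p}q}$ by $C(1 + \mathcal{K}(u^{(0)}, \mathfrak{p}, q, \eps))$. Feeding these estimates into the right-hand side of \eqref{eq:energy-est-eps} at exponent $\mathfrak{p}$ (using that $\mathfrak{p}q \ge \mathfrak{p}$) then controls the remaining norms on the left-hand side of \eqref{eq:energy-entropy} uniformly in $R$. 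Note that the validity of the entropy bound at the higher exponent $\mathfrak{p}q$ only requires finiteness of the entropy and mass moments of the initial data, which is ensured by $\mathcal{K}(u^{(0)},\mathfrak{p},q,\eps)<\infty$.

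For tightness, I would follow the proof of Proposition~\ref{prop:weak-eps-r} almost verbatim: decompose $\hat u_{\eps,R}$ into drift and stochastic-integral parts and establish $R$-uniform fractional time-regularity bounds in $W^{\alpha,\mathfrak{p}}(0,T; H^{-1}(\bT))$ for some $\alpha \in (1/\mathfrak{p}, 1/2)$. The drift bound uses the uniform $L^2(Q_T)$-control on $F_\eps(\hat u_{\eps,R}) \D_x^3 \hat u_{\eps,R}$ together with the one-dimensional Sobolev embedding $H^1(\bT) \hookrightarrow L^\infty(\bT)$ to handle $F_\eps^2(\hat u_{\eps,R}) \D_x^3 \hat u_{\eps,R}$ and $\sigma_k F_\eps'(\hat u_{\eps,R}) \D_x(\sigma_k F_\eps(\hat u_{\eps,R}))$ in $L^2(Q_T)$. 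Combined with the uniform $L^\infty(0,T; H^1(\bT))$-bound just derived, Simon's compactness embedding into $C([0,T]; H^s(\bT))$ for $s \in (1/2, 1)$, followed by Jakubowski's theorem, yields a subsequence $R_k \to \infty$ and a new probability space on which $\hat u_{\eps,R_k}$ converges $\hat\P$-almost surely in $C(Q_T)$ to a limit $\check u_\eps$, together with convergence of the companion Wiener processes.

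The main new point relative to the $N\to\infty$ limit of Proposition~\ref{prop:weak-eps-r} is showing that the cut-off factor $g_{R_k}(\norm{\hat u_{\eps,R_k}(t)}_{L^\infty(\bT)})$ tends to $1$ in the limit. The $R$-uniform $H^1$-bound combined with Sobolev embedding in one space dimension yields uniform moment control of $\norm{\hat u_{\eps,R}}_{L^\infty(Q_T)}$, so by Fatou's lemma $\norm{\check u_\eps}_{L^\infty(Q_T)} < \infty$ almost surely; together with the a.s.\ uniform convergence on $Q_T$ and the fact that $g_{R_k}\equiv 1$ on $[0,R_k]$ with $R_k\to\infty$, this yields $g_{R_k}(\norm{\hat u_{\eps,R_k}(t)}_{L^\infty(\bT)}) \to 1$ uniformly in $t$, almost surely. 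With this at hand, the martingale-identification argument of Proposition~\ref{prop:weak-eps-r} can be repeated to identify $\check u_\eps$ as a weak solution of \eqref{eq:approx-equation-2}, and \eqref{eq:energy-entropy} for $\check u_\eps$ follows from the $R$-uniform bounds by Fatou's lemma once more. I expect the main technical obstacle to be the passage to the limit in the nonlinear stochastic and Stratonovich-correction terms: uniform integrability must be furnished by the combined energy-entropy estimate together with $\sum_{k \in \Z} \norm{\sigma_k}_{W^{2,\infty}(\bT)}^2 < \infty$, so that the dominated-convergence argument in the new probability space applies and the cut-off factors, now shown to converge to $1$, can be removed.
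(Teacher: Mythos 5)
Your proposal is correct and follows the same route as the paper: $R$-uniform bounds from Lemmas~\ref{lem:entropy} and~\ref{lem:energy-estimate}, followed by the stochastic-compactness and martingale-identification machinery of Proposition~\ref{prop:weak-eps-r} and Fatou's lemma. The paper leaves the limit passage to the reader, calling it ``very similar \ldots\ (in fact, easier)''; the one genuinely new ingredient you correctly identify and supply is that the a.s.\ uniform convergence in $C(Q_T)$ together with $\|\check u_\eps\|_{L^\infty(Q_T)}<\infty$ a.s.\ forces the cut-off factors $g_{R_k}$ to equal $1$ identically along the subsequence, so the truncation disappears in the martingale identification.
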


\begin{proof}

Let $\hat u_{\eps, R}$ be a weak solution of \eqref{eq:equation-eps-R-dependent}. By Lemmata~\ref{lem:entropy} and \ref{lem:energy-estimate} we have 
\begin{align*}
& \hat \E \left[\sup_{t \in [0,T]} \norm{\D_x  \hat u_{\eps, R}(t)}_{L^2(\bT)}^{\mathfrak{p}} + \norm{F_\eps( \hat u_{\eps, R}) \D_x^3  \hat u_{\eps, R}}_{L^2(Q_T)}^{\mathfrak{p}}\right] \\
& + \hat\E\left[\sup_{t \in [0,T]} \norm{G_\eps( \hat u_{\eps, R}(t))}_{L^1(\bT)}^{\mathfrak{p}q} + \norm{\D_x^2 \hat  u_{\eps, R}}_{L^2(Q_T)}^{2\mathfrak{p}q}\right] \le  C_{p,q,\sigma,n,L,T} \left(1 + \mathcal{K}(u^{(0)}, \mathfrak{p}, q, \eps)\right), 
\end{align*}
and notice that the constant does not depend on $R$. From this estimate, the construction of a weak solution $\{ (\check \Omega, \check\cF,\check\F,\check \P), \ (\check\beta_k)_{k \in \Z},\ \check u^{(0)},  \check u \}$ is very similar to the construction in Proposition \ref{prop:weak-eps-r} (in fact, easier) and is left to the reader. Estimate \eqref{eq:energy-entropy} follows from the above estimate and Fatou's lemma. 
\end{proof}
\begin{remark}\label{rem:entropychain}
  The right-hand side of  \eqref{eq:energy-entropy} can be formulated independently of $\eps$, just noting the inequality
\[
G_\eps(r) \stackrel{\eqref{eq:def-G-eps}}{=} \int_r^\infty \int_{r'}^\infty \frac{1}{F_\eps^2(r'')} \, \diff r'' \, \diff r' \stackrel{\eqref{eq:def-f-eps}}{\le} \int_r^\infty \int_{r'}^\infty \frac{1}{F_0^2(r'')} \, \diff r'' \, \diff r' \stackrel{\eqref{eq:def-G-eps}}{=} G_0(r) \qedhere
\]
and choosing $\eps=0$ in $\mathcal K(u^{(0)}, p,q,\eps).$
\end{remark}
  
%%%%%%%%%%%%%%%%%%%%%
%%%%%%%%%%%%%%%%%%%%%
\section{The Degenerate Limit\label{sec:eps-limit}}
%%%%%%%%%%%%%%%%%%%%%
%%%%%%%%%%%%%%%%%%%%%
In order to prove Theorem~\ref{th:existence}, we first prove additional regularity in time in order to obtain $\diff \tilde{\P}$-almost surely uniform convergence in the limit $\eps\searrow0$ using a version of Prokhorov's theorem (cf.~\cite[Theorem~2]{Jakubowski}) and a compactness argument. Subsequently, we prove that \eqref{apriori-main} is recovered in this limit by employing the energy-entropy estimate, Proposition~\ref{prop:weak-eps} . The proof is concluded by showing that the weak formulation \eqref{eq:exact-weak} is valid, which follows by applying \cite[Proposition A.1]{HOF2} to characterize the martingale. 

For $\eps \in \{ n^{-1} \}_{n=1}^\infty$,  we  denote by  $\{ (\check \Omega_\eps, \check\cF_\eps,\check\F_\eps,\check \P_\eps), \ (\check\beta^k_\eps)_{k \in \Z},\ \check u^{(0)}_\eps,  \check u_\eps \}$  the weak solution of  \eqref{eq:equation-approx-equation-2} constructed in Proposition  \ref{prop:weak-eps}. In order to drop the $\eps$-dependence from the probability space we will be considering $ ( (\check\beta^k_\eps)_{k \in \Z},\ \check u^{(0)}_\eps,  \check u_\eps)$ on a common probability space given by 
\begin{equs}
 (\check \Omega, \check\cF,\check\F,\check \P):= \prod _{\eps}  (\check \Omega_\eps, \check\cF_\eps,\check\F_\eps,\check \P_\eps).
\end{equs}

%%%%%%%%%%%%%%%%%%%%%
%%%%%%%%%%%%%%%%%%%%%
\subsection{Compactness}\label{sec:compactness}
%%%%%%%%%%%%%%%%%%%%%
%%%%%%%%%%%%%%%%%%%%%
The reasoning of this section uses techniques of \cite{FischerGruen2018} and of \cite[\S4]{GessGnann2020}. 
\begin{lemma}[Regularity in time]\label{lem:reg-time} Suppose that $T\in(0,\infty)$, $\eps\in(0,1]$, $n\in[8/3,4)$, $\fp>1$, $q > 1$ satisfying $q \ge \max\left\{\frac{1}{4-n},\frac{(n-2)}{2n-5}\right\}$, and 
\[
u^{(0)}\in L^{(n+2)\fp}\left(\Omega,\F_{0},\P;H^{1}(\bT)\right)
\]
such that $u^{(0)}\ge0$ $\diff \P$-almost surely, $\E\abs{\Aunull}^{2(n+2)\fp q}<\infty$, and $\E\norm{G_{0}\left(u^{(0)}\right)}_{L^{1}(\bT)}^{(n+2)\fp q}<\infty$. Then, the weak solutions $\check{u}_{\eps}$ constructed in Proposition~\ref{prop:weak-eps} satisfy for any  $\fpp\in[1,2\fp)$, 
\begin{subequations}
\label{eq:reg-time} 
\begin{equation}
\check u_ \eps\in L^{\fpp}\left(\check\Omega,\check \cF,\check \P;C^{\frac 1 4}\left([0,T];\Ltwo\right)\right)\label{eq:reg-time-1}
\end{equation}
with 
\begin{equation}\begin{split}
&\norm{\check{u}_\eps}_{L^{\fpp}(\check \Omega;C^{\frac 1 4}([0,T];\Ltwo))}\\
&\le C\,\left[\E\left(1+\abs{\Aunull}^{2(n+2)\fp q}+\norm{G_{0}(u^{(0)})}_{L^{1}(\bT)}^{(n+2)\fp q}+\norm{\D_{x}u^{(0)}}_{L^{2}(\bT)}^{(n+2)\fp}\right)\right]^{\frac{1}{2\fp}},\label{eq:reg-time-2}
\end{split}\end{equation}
\end{subequations}
where $C$ is a constant depending only on $\fp,\fpp,q,(\sigma_k)_{k\in\Z},L,$ and $T$.
 \end{lemma}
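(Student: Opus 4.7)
The plan is to establish $C^{1/4}$-in-time H\"older regularity with values in $L^{2}(\bT)$ by interpolating a $C^{1/2}$-type bound in $H^{-1}(\bT)$ against the uniform $L^\infty_tH^{1}_x$-bound already provided by \eqref{eq:energy-entropy}. The key elementary inequality
\[
\|\check u_\eps(t)-\check u_\eps(s)\|_{L^{2}(\bT)}^{2}\le \|\check u_\eps(t)-\check u_\eps(s)\|_{H^{-1}(\bT)}\,\|\check u_\eps(t)-\check u_\eps(s)\|_{H^{1}(\bT)}
\]
reduces the task to separately controlling the deterministic drift and the stochastic integral in \eqref{eq:equation-approx-equation-2} on intervals $[s,t]$, measured in $H^{-1}(\bT)$.

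For the drift, testing against $\varphi\in H^{1}(\bT)$, integrating by parts, applying Cauchy--Schwarz in $t'$, and using $\sum_k\|\sigma_k\|_{W^{2,\infty}(\bT)}^{2}<\infty$ from \eqref{eq:winf-sigma-k}, I obtain a pathwise estimate of the form
\[
\bigl|\bigl\langle (\check u_\eps)_{\mathrm{drift}}(t)-(\check u_\eps)_{\mathrm{drift}}(s),\varphi\bigr\rangle_{L^{2}(\bT)}\bigr|\le C\,|t-s|^{1/2}\,\|\varphi\|_{H^{1}(\bT)}\Bigl(\|F_\eps(\check u_\eps)\|_{L^{\infty}(Q_T)}\|F_\eps(\check u_\eps)\D_x^{3}\check u_\eps\|_{L^{2}(Q_T)}+R_\eps\Bigr),
\]
where $R_\eps$ collects the It\^o-correction contributions, controlled by $\sup_{t}\|\check u_\eps(t)\|_{H^{1}(\bT)}^{\kappa}$ for some $\kappa=\kappa(n)$. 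Using $F_\eps(r)\le C(1+|r|^{n/2})$, mass conservation (Remark~\ref{rem:conservation-of-mass}), Poincar\'e's inequality, and the Sobolev embedding $\|\check u_\eps\|_{L^{\infty}(\bT)}\lesssim\|\D_x\check u_\eps\|_{L^{2}(\bT)}+|\A(\check u_\eps)|$, each factor of $\|F_\eps(\check u_\eps)\|_{L^{\infty}(Q_T)}$ converts into at most $n/2$ powers of the $H^{1}$-norm. H\"older's inequality in $\check\Omega$ combined with \eqref{eq:energy-entropy} (with $\mathfrak p=(n+2)\fp$) then places the drift in $L^{2\fp}(\check\Omega;C^{1/2}([0,T];H^{-1}(\bT)))$, with norm bounded by the right-hand side of \eqref{eq:reg-time-2}.

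For the stochastic integral, the Burkholder--Davis--Gundy inequality together with $\|\D_x(\sigma_k F_\eps(\check u_\eps))\|_{H^{-1}(\bT)}\le\|\sigma_k F_\eps(\check u_\eps)\|_{L^{2}(\bT)}$ and \eqref{eq:reg-sigma-k} yield, for every $r\ge 1$,
\[
\check\E\,\Bigl\|\sum_{k\in\Z}\int_s^t\D_x(\sigma_k F_\eps(\check u_\eps(t')))\,\diff\check\beta^k(t')\Bigr\|_{H^{-1}(\bT)}^{2r}\le C\,|t-s|^{r}\,\check\E\sup_{t'\in[0,T]}\bigl(1+\|\check u_\eps(t')\|_{H^{1}(\bT)}^{nr}\bigr).
\]
Kolmogorov's continuity criterion then places the stochastic part in $L^{2r}(\check\Omega;C^{\alpha}([0,T];H^{-1}(\bT)))$ for every $\alpha<1/2$, provided $r$ is chosen large enough relative to $\fpp$ (the $(n+2)\fp q$-moment hypothesis on $u^{(0)}$ supplies the required integrability after invoking \eqref{eq:energy-entropy}).

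Combining both pieces via the $L^{2}$-$H^{-1}$-$H^{1}$ interpolation and H\"older's inequality in $\check\Omega$ then delivers \eqref{eq:reg-time}, with the time-H\"older exponent $\tfrac14$ attained exactly on the drift and up to an arbitrarily small loss on the stochastic contribution---the latter being absorbed by the strict inequality $\fpp<2\fp$ in the statement. The main technical obstacle is the bookkeeping of exponents: since each appearance of $\|F_\eps(\check u_\eps)\|_{L^{\infty}(Q_T)}$ costs $n/2$ powers of the $H^{1}$-norm, and the stochastic BDG step brings yet another factor of $F_\eps$ (worth $n/2$), one must carefully account for the total $(n/2+1)\cdot 2\fp=(n+2)\fp$ powers appearing in each term so as to match precisely the integrability that Proposition~\ref{prop:weak-eps} provides---this is exactly why the lemma requires $L^{(n+2)\fp}$-integrable initial data.
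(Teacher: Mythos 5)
Your route via the interpolation inequality
\[
\|\check u_\eps(t)-\check u_\eps(s)\|_{L^{2}(\bT)}^{2}\le \|\check u_\eps(t)-\check u_\eps(s)\|_{H^{-1}(\bT)}\,\|\check u_\eps(t)-\check u_\eps(s)\|_{H^{1}(\bT)}
\]
is genuinely different from the paper's, which tests the weak formulation against the normalized full increment $\varphi=(\check u_\eps(t_2)-\check u_\eps(t_1))/\|\check u_\eps(t_2)-\check u_\eps(t_1)\|_{L^2(\bT)}$, extracts the $|t_2-t_1|^{1/4}$-gain for the drift by an $F_\eps$-weighted Cauchy--Schwarz in $(t,x)$, and controls the stochastic integral $I_\eps$ directly in $L^2(\bT)$ via \cite[Theorem~3.2~(vi)]{OV20}, yielding $C^{1/2-\sigma}([0,T];L^2(\bT))$, with $\sigma=1/4$ eventually chosen so that the exponent equals $1/4$ exactly.

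There is, however, a genuine gap. The stochastic increment, estimated in $H^{-1}(\bT)$ through Burkholder--Davis--Gundy and Kolmogorov's continuity criterion, can only reach $C^{\alpha}([0,T];H^{-1}(\bT))$ for exponents $\alpha$ \emph{strictly} less than $1/2$. For $|t-s|$ small this dominates the drift's pathwise bound $O(|t-s|^{1/2})$, so the full increment in $H^{-1}(\bT)$ is only $O(|t-s|^\alpha)$ with $\alpha<1/2$, and the interpolation step \emph{halves} that exponent, producing $C^{\alpha/2}([0,T];L^2(\bT))$ with $\alpha/2$ strictly below $1/4$. This falls short of the $C^{1/4}$ asserted in the lemma. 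Your remark that the loss ``is absorbed by the strict inequality $\fpp<2\fp$'' does not repair it: that inequality constrains the $\check\Omega$-integrability of the H\"older seminorm, while the time exponent is a separate parameter, and enlarging the Kolmogorov moment $r$ only pushes the attainable $H^{-1}$-exponent towards $1/2$ without reaching it. The paper avoids this precisely by \emph{not} pushing the stochastic integral through the $H^{-1}$--$H^{1}$--$L^{2}$ interpolation: since $I_\eps$ is already $L^2(\bT)$-valued and $1/4$ lies strictly below its H\"older ceiling $1/2$, the standard loss is harmless when one measures $\|I_\eps(t_2)-I_\eps(t_1)\|_{L^2(\bT)}$ directly, while the drift is paired with $\D_x\big(\check u_\eps(t_2)-\check u_\eps(t_1)\big)$, whose $L^2$-norm is controlled by the energy estimate. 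You also cannot salvage the argument by splitting the increment and interpolating only the drift contribution $D(t_1,t_2):=\check u_\eps(t_2)-\check u_\eps(t_1)-\big(I_\eps(t_2)-I_\eps(t_1)\big)$, because $D(t_1,t_2)$ is $L^2(\bT)$-valued but not pathwise bounded in $H^{1}(\bT)$, so the interpolation inequality does not apply to $D$ alone; this is exactly why the paper's normalized-test-function device is needed.
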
 
\begin{proof}
Starting from the weak formulation (cf.~Definition~\ref{def:weak-sol-eps-R}~\eqref{item:satisfying-the-equ})
\begin{align}
\begin{split} & \inner{\check{u}_\eps(t_{2})-\check{u}_\eps(t_{1}),\varphi}_{\Ltwo}+\int_{t_{1}}^{t_{2}}\intort\Fepsq(\check{u}_\eps)\D_{x}^{3}\check{u}_\eps\D_{x}\varphi \ \diff x \, \diff t\\
 & \qquad+\tfrac{1}{2}\sum_{k\in\Z}\int_{t_{1}}^{t_{2}}\sigma_{k}\Fepsp(\check{u}_\eps)\D_{x}\left(\sk\Feps(\check{u}_\eps)\right)\D_{x}\varphi \, \diff x \, \diff t = \sum_{k\in\Z}\int_{t_{1}}^{t_{2}}\intort\D_{x}\left(\sk\Feps(\check{u}_\eps)\right)\varphi \, \diff x \, \diff\check \beta^{k}_\eps\qquad
\end{split}
\label{eq:greg-1}
\end{align}
for all $\varphi\in H^{1}(\bT)$, $t_{1},t_{2}\in[0,T]$ with $t_{1}\leq t_{2}$ and $\diff \check \P$-almost surely, we obtain, by an approximation argument based on the separability of $H^1(\bT)$ that the $\hat\P$-zero set can be chosen independently of $\varphi$, that is, $\diff \check \P$-almost surely, 
\begin{align*}
\begin{split} & \left(\check{u}_\eps(t_{2})-\check{u}_\eps(t_{1}),\varphi\right)_{\Ltwo}+\int_{t_{1}}^{t_{2}}\intort\Fepsq(\check{u}_\eps)\D_{x}^{3}\check{u}_\eps\D_{x}\varphi \, \diff x \, \diff t\\
 & \qquad+\tfrac{1}{2}\sum_{k\in\Z}\int_{t_{1}}^{t_{2}}\sigma_{k}\Fepsp(\check{u}_\eps)\D_{x}\left(\sk\Feps(\check{u}_\eps)\right)\D_{x}\varphi \, \diff x \, \diff t\leq\sup_{\norm{\psi}_{\Ltwo}\leq1}\abs{(I_{\eps}(t_{2})-I_{\eps}(t_{1}),\psi)}
\end{split}
\end{align*}
for all $\varphi\in H^{1}(\bT)$ with $\norm{\varphi}_{\Ltwo}=1.$ Here, we have used the abbreviation 
\begin{equation}
I_{\eps}(t):=\sum_{k\in\Z}\int_{0}^{t}\D_{x}\left(\sk\Feps(\check{u}_\eps)\right) \diff \check \beta^{k}.\label{eq:greg-2}
\end{equation}
Following the lines of the proof of Lemma~4.10 in \cite{FischerGruen2018}, the choice $\varphi:=\frac{\check{u}_\eps(t_{2})-\check{u}_\eps(t_{1})}{\norm{\check{u}_\eps(t_{2})-\check{u}_\eps(t_{1})}_{\Ltwo}}$ and Young's inequality imply that there is a finite constant $C$ independent of $\eps>0$ such that, $\diff \hat\P$-almost surely, 
\begin{align}
& \norm{\check{u}_\eps(t_{2})-\check{u}_\eps(t_{1})}_{\Ltwo}^2 \nonumber \\
& \quad \leq C\Bigg(\abs{\int_{t_{1}}^{t_{2}}\intort\Fepsq(\check{u}_\eps)\D_{x}^{3}\check{u}_\eps\D_{x}\left(\check{u}_\eps(t_{2})-\check{u}_\eps(t_{1})\right) \diff x \, \diff t} \nonumber \\
& \quad \qquad+\tfrac{1}{2}\abs{\sum_{k\in\Z}\int_{t_{1}}^{t_{2}}\intort\sk\Fepsp(\check{u}_\eps)\D_{x}\left(\check{u}_\eps(t_{2})-\check{u}_\eps(t_{1})\right)\D_{x}\left(\sk\Feps(\check{u}_\eps)\right) \diff x \, \diff t}\Bigg) \nonumber \\
& \quad \qquad+\norm{I_{\eps}(t_{2})-I_{\eps}(t_{1})}_{\Ltwo}^2 =: R_{1}^2(t_1,t_2)+R_{2}^2(t_1,t_2)+R_{3}^2(t_1,t_2). \label{eq:greg-3}
\end{align}
By \cite[Theorem~3.2~(vi)]{OV20}, for all $0<\sigma<1/2$, $1\leq \fpp < 2\fp$,
\begin{eqnarray*}
\lefteqn{\norm{I_{\eps}}_{L^{\fpp}(\check \Omega;C^{\frac{1}{2}-\sigma}\left([0,T];L^{2}(\bT)\right))}}\\
 & \le & C_{T,\fpp,\fp,\sigma}\|(\D_{x}\left(\sk\Feps(\check{u}_\eps)\right))_{k\in\Z}\|_{L^{2\fp}(\Omega;L^{\infty}(0,T;L_{2}(\ell^2(\Z);L^2(\bT))))}\\
 & = & C\,\left[\check \E\ \sup_{t\in[0,T]}\left(\sum_{k\in\Z}\|\D_{x}(\sigma_{k}F_{\eps}
(\check{u}_\eps))\|_{L^{2}(\bT)}^{2}\right)^{\fp}\right]^{\frac{1}{2\fp}}\\
 & \le & C\,\left[\check\E\ \sup_{t\in[0,T]}\left(\sum_{k\in\Z}\|\D_{x}\sigma_{k}\|_{L^{\infty}(\bT)}^{2}\|F_{\eps}(\check{u}_\eps)\|_{L^{2}(\bT)}^{2}\right)^{\fp}\right]^{\frac{1}{2\fp}}\\
 && + C\,\left[\check\E\ \sup_{t\in[0,T]}\left(\sum_{k\in\Z}\|\sigma_{k}\|_{L^{2}(\bT)}^{2}\|F_{\eps}'(\check{u}_\eps)\|_{L^{\infty}(\bT)}^{2}\|\D_{x}\check{u}_\eps\|_{L^{2}(\bT)}^{2}\right)^{\fp}\right]^{\frac{1}{2\fp}}\\
 & \stackrel{\eqref{eq:reg-sigma-k}}{\le} & C\,\left[\check\E\ \sup_{t\in[0,T]}\left(\|F_{\eps}(\check{u}_\eps)\|_{L^{2}(\bT)}^{2\fp}+\|F_{\eps}'(\check{u}_\eps)\|_{L^{\infty}(\bT)}^{2\fp}\|\D_{x}\check{u}_\eps\|_{L^{2}(\bT)}^{2\fp}\right)\right]^{\frac{1}{2\fp}}.
\end{eqnarray*}
We then use that
\begin{align*}
\|F_{\eps}(\check{u}_\eps)\|_{L^{2}(\bT)}^{2\fp}=\left(\int_{\bT}F_{\eps}^{2}(\check{u}_\eps)\right)^{\fp} & \stackrel{\eqref{eq:def-f-eps}}{\le}C\left(\abs{\A(\check u^{(0)}_\eps)}^{n}+\|\D_{x}\check{u}_\eps\|_{L^{2}}^{n}+1\right){}^{\fp}\\
 & \le C\left(1+\abs{\A(\check u^{(0)}_\eps)}^{\fp n}+\|\D_{x}\check{u}_\eps\|_{L^{2}}^{\fp n}\right)
\end{align*}
and
\begin{align*}
\|F_{\eps}'(\check{u}_\eps)\|_{L^{\infty}(\bT)}^{2\fp}\|\D_{x}\check{u}_\eps\|_{L^{2}(\bT)}^{2\fp} & \stackrel{\eqref{eq:def-f-eps}}{\le} \left(1+\|\check{u}_\eps\|_{L^{\infty}(\bT)}\right)^{2\left(\frac{n}{2}-1\right)\fp}\|\D_{x}\check{u}_\eps\|_{L^{2}(\bT)}^{2\fp}\\
 & \le C\left(1+\abs{\A(\check u^{(0)}_\eps)}^{\fp n}+\|\D_{x}\check{u}_\eps\|_{L^{2}}^{\fp n}\right)
\end{align*}
to get for $R_3$ the estimate
\begin{equation}
\label{eq:stoch_integral}
\check\E\sup_{t_1,t_2\in[0,T]}\left(\frac{R_3(t_1,t_2)}{|t_2-t_1|^{(1/2-\sigma)}}\right)^\fpp
\le  C\,\left[\check\E\ \sup_{t\in[0,T]}\left(1+\abs{\A(\check u^{(0)}_\eps)}^{\fp n}+\|\D_{x}\check{u}_\eps(t)\|_{L^{2}}^{\fp n}\right)\right]^{\frac{\fpp}{2\fp}},
\end{equation}
valid for any $\sigma\in\left(0,\frac 1 2\right)$.
For $R_{1}$, we estimate using $\Feps(r) \stackrel{\eqref{eq:def-f-eps}}{\le} (r^{2}+\eps^{2})^{\frac n 4}$, 
\begin{align}
\begin{split}& \abs{R_{1}(t_1,t_2)} \\
& \quad \leq C\Bigg(\int_{t_{1}}^{t_{2}}\intort\Fepsq(\check{u}_\eps) \left(\D_{x}(\check{u}_\eps(t_{2})-\check{u}_\eps(t_{1}))\right)^{2} \diff x \, \diff t\Bigg)^{\frac 1 4}\\
 & \quad \qquad\times\Bigg(\inttime\intort\Fepsq(\check{u}_\eps) \, (\D_{x}^{3}\check{u}_\eps)^{2} \, \diff x \, \diff t\Bigg)^{\frac 1 4}\\
 & \quad \leq C |t_{2}-t_{1}|^{\frac 1 4}\left(1+|\A(\check u_\eps^{(0)})|^{\frac{n+2}{4}}+\sup_{t\in[0,T]}\norm{\D_{x}\check{u}_\eps}_{\Ltwo}^{\frac{n+2}{4}}\right)\\
 & \quad \qquad\times\left(\int_{0}^{T}\intort\Fepsq(\check{u}_\eps) \, (\D_{x}^{3}\check{u}_\eps)^{2} \, \diff x \, \diff t\right)^{\frac 1 4} \\
   & \quad \leq C |t_{2}-t_{1}|^{\frac 1 4} \\
 & \quad \qquad\times\left(1+|\A(\check u_\eps^{(0)})|^{\frac{n+2}{2}}+\sup_{t\in[0,T]}\norm{\D_{x}\check{u}_\eps}_{\Ltwo}^{\frac{n+2}{2}}+\left( \int_{0}^{T}\intort\Fepsq(\check{u}_\eps) \, (\D_{x}^{3}\check{u}_\eps)^{2} \, \diff x \, \diff t\right)^{\frac 1 2}\right).
\end{split}
\label{eq:greg-6}
\end{align}
Hence,
\begin{align}
\begin{split}
&\check \E\left(\sup_{t_1,t_2\in[0,T]}\frac{|R_{1}(t_1,t_2)|}{|t_2-t_1|^{\frac{1}{4}}}\right)^{\fpp} \\
& \leq C \, \check \E\left(1+|\A(\check u_\eps^{(0)})|^{\frac{(n+2)\fpp}{2}}+\sup_{t\in[0,T]}\norm{\D_{x}\check{u}_\eps}_{\Ltwo}^{\frac{(n+2)\fpp}{2}}+\left( \int_{0}^{T}\intort\Fepsq(\check{u}_\eps) \, (\D_{x}^{3}\check{u}_\eps)^{2} \, \diff x \, \diff t\right)^{\frac{\fpp}{2}}\right).
\end{split}
\label{eq:greg-6-1}
\end{align}
The term $R_{2}^2$ is split as follows
\begin{align}
2R_{2}^2(t_1,t_2) & =\sum_{k\in\Z}\inttime\intort \sigma_{k}^{2} \Fepsp(\check{u}_\eps)^{2}\D_{x}\left(\check{u}_\eps(t_{2})-\check{u}_\eps(t_{1})\right)\D_{x}\check{u}_\eps(t) \, \diff x \, \diff t \nonumber \\
 & \qquad+\sum_{k\in\Z} \inttime\intort \sigma_{k}(\D_x\sigma_{k}) \Fepsp(\check{u}_\eps)\Feps(\check{u}_\eps)\D_{x}\left(\check{u}_\eps(t_{2})-\check{u}_\eps(t_{1})\right) \diff x \, \diff t \nonumber \\
 & =:R_{21}^2+R_{22}^2. \label{eq:greg-61}
\end{align}
For $R_{21}$, we estimate
\begin{align}
|R_{21}| &\stackrel{\eqref{eq:def-f-eps}}{\le} C\left(\sum_{k\in\Z} \norm{\sigma_{k}}_{\Linfty}^{2} \inttime\sup_{t\in[0,T]}\left(1+\norm{\check{u}_\eps(t)}_{\Linfty}^{n-2}\right)\sup_{t\in[0,T]}\norm{\D_{x}\check{u}_\eps(t)}_{\Ltwo}^{2}\right)^{\frac 1 2} \nonumber \\
 &\stackrel{\eqref{eq:reg-sigma-k}}{\le} C T^{\frac 1 4}\abs{t_{2}-t_{1}}^{\frac 1 4} \left(1+\abs{\Aunull}^{\frac{n}{2}}+\sup_{t\in[0,T]}\norm{\D_{x}\check{u}_\eps(t)}_{\Ltwo}^{\frac{n}{2}}\right). \label{eq:greg-7}
\end{align}
Finally, 
\begin{align}
\begin{split}\abs{R_{22}} & \leq C\left(\sum_{k\in\Z} \norm{\sigma_k}_{\Linfty} \norm{\D_x \sigma_k}_{\Linfty} \inttime\intort \abs{\Fepsp(\check{u}_\eps)\Feps(\check{u}_\eps)\D_{x}\left(\check{u}_\eps(t_{2})-\check{u}_\eps(t_{1})\right)} \diff x \, \diff t\right)^{\frac 1 2}\\
 &\stackrel{\eqref{eq:reg-sigma-k}, \eqref{eq:def-f-eps}}{\le} C T^{\frac 1 4} \abs{t_{2}-t_{1}}^{\frac 1 4} \left(1+\abs{\A(\check u_\eps^{(0)})}^{\frac{n}{2}}+\sup_{t\in[0,T]}\norm{\D_{x}\check{u}_\eps(t)}_{\Ltwo}^{\frac{n}{2}}\right).
\end{split}
\label{eq:greg-8}
\end{align}
Hence, we get
\begin{align}
\begin{split}
\check \E\left(\sup_{t_1,t_2\in[0,T]}\frac{|R_{2}(t_1,t_2)|}{|t_2-t_1|^{\frac{1}{4}}}\right)^{\fpp}
\le &
C\check \E\left(1+\abs{\A(\check u_\eps^{(0)})}^{\frac{n\fpp}{2}}+\sup_{t\in[0,T]}\norm{\D_{x}\check{u}_\eps(t)}_{\Ltwo}^{\frac{n\fpp}{2}}\right).
\end{split}
\label{eq:greg-61-1}
\end{align}
Altogether, combining \eqref{eq:stoch_integral}, \eqref{eq:greg-6-1},  \eqref{eq:greg-61-1} and choosing $\sigma = \frac 1 4$, we get  
\begin{eqnarray*}
 \lefteqn{\|\check{u}_\eps\|_{L^{\fpp}(\check \Omega;C^{\frac{1}{4}}
([0,T];\Ltwo))}^{\fpp}} \\
&\le&  C\,\left[\check \E\ \sup_{t\in[0,T]}\left(1+\abs{\Aunull}^{\fp n}+\|\D_{x}\check{u}_{\eps}(t)\|_{L^{2}}^{\fp n}\right)\right]^{\frac{\fpp}{2\fp}}\\
&&
+C\left[\check \E\left(1+\abs{\A(\check u_\eps^{(0)})}^{(n+2)\fp}+\sup_{t\in[0,T]}\norm{\D_{x}\check{u}_\eps}_{\Ltwo}^{(n+2)\fp}\right)\right]^{\frac{\fpp}{2\fp}} \\
&&
+C\left[\check \E\left(\int_{0}^{T}\intort\Fepsq(\check{u}_\eps) \, (\D_{x}^{3}\check{u}_\eps)^{2} \, \diff x \, \diff t\right)^{\fp}\right]^{\frac{\fpp}{2\fp}} \\
&& 
+C\left[\check \E\left(1+\abs{\A(\check u_\eps^{(0)})}^{n\fp}+\sup_{t\in[0,T]}\norm{\D_{x}\check{u}_\eps(t)}_{\Ltwo}^{n\fp}\right)\right]^{\frac{\fpp}{2\fp}}\\
&\le& C\left[\check \E\left(1+\abs{\A(\check u_\eps^{(0)})}^{(n+2)\fp}+\sup_{t\in[0,T]}\norm{\D_{x}\check{u}_\eps}_{\Ltwo}^{(n+2)\fp}\right)\right]^{\frac{\fpp}{2\fp}}\\
&& + C\left[\check \E\left(\int_{0}^{T}\intort\Fepsq(\check{u}_\eps) \, (\D_{x}^{3}\check{u}_\eps)^{2} \, \diff x \, \diff t\right)^{\fp}\right]^{\frac{\fpp}{2\fp}}\\
 &\stackrel{\eqref{eq:energy-entropy}}{\le}&
 C\left[1+\E\abs{\Aunull}^{2(n+2)\fp q}+\E\norm{G_{0}(u^{(0)})}_{L^{1}(\bT)}^{(n+2)\fp q}+\E\norm{\D_{x}u^{(0)}}_{L^{2}(\bT)}^{(n+2)\fp}\right]^{\frac{\fpp}{2\fp}}
\end{eqnarray*}
which gives the desired estimate \eqref{eq:reg-time-2}. Note that we have used Proposition~\ref{prop:weak-eps} to get an estimate in terms of the initial data. 
\end{proof}

By  interpolation, we get the following result on H\"older regularity with respect to space and time (for details, see \cite[Lemma 4.11, p.\ 437]{FischerGruen2018}). Note that in Corollary~\ref{cor:hoelder}, we control the moment of order $p'$ of $\norm{\check{u}_\eps}_{{C^{\frac 1 8, \frac 1 2}}(Q_T)}$ for any $p'\in[1,2p)$ while in \cite{FischerGruen2018} only estimates for second moments have been provided. This is due to \eqref{eq:reg-time-1} as the analogous estimate in \cite{FischerGruen2018} has been formulated only for $p'=2.$

\begin{corollary}[H\"older-continuity]\label{cor:hoelder}
Under the assumptions of Lemma~\ref{lem:reg-time}, the solutions $\check{u}_\eps$ constructed in Proposition~\ref{prop:weak-eps} are space-time H\"older-continuous, $\diff\check \P$-almost surely. In particular, there is a finite constant $C$ independent of $\eps>0$ such that 
  \begin{align}
    \label{eq:greg-10}
    \check\E\left[\norm{\check{u}_\eps}^{p'}_{{C^{\frac 1 8, \frac 1 2}}(Q_T)}\right]\leq C
  \end{align}
  for any $p'\in [1,2p)$.
\end{corollary}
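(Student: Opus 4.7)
The plan is to combine the time-regularity estimate from Lemma~\ref{lem:reg-time} with the spatial $H^{1}$ control coming from Proposition~\ref{prop:weak-eps} via an elementary interpolation inequality on $\bT$, and then take moments. Throughout, $\check u_{\eps}$ denotes the solution constructed in Proposition~\ref{prop:weak-eps}; mass conservation (Remark~\ref{rem:conservation-of-mass}) together with Poincar\'e's inequality upgrades the bound on $\sup_{t\in[0,T]}\|\D_x\check u_\eps(t)\|_{L^2(\bT)}$ to a bound on $\sup_{t\in[0,T]}\|\check u_\eps(t)\|_{H^1(\bT)}$, and thus, by the one-dimensional Sobolev embedding $H^1(\bT)\hookrightarrow C^{1/2}(\bT)$, to a control on $\sup_{t\in[0,T]}[\check u_\eps(t)]_{C^{1/2}(\bT)}$. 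This already settles the spatial part of the claimed $C^{1/8,1/2}$ norm.

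For the time regularity in the sup-norm, first I would fix $t_1,t_2\in[0,T]$ and apply the elementary one-dimensional interpolation: for $v\in C^{1/2}(\bT)\cap L^2(\bT)$ and $r\in(0,L]$, averaging $|v(x)|\le |v(y)|+[v]_{C^{1/2}}|x-y|^{1/2}$ over $y\in B_r(x)$ gives
\[
\|v\|_{L^\infty(\bT)}\le C\bigl(r^{-1/2}\|v\|_{L^2(\bT)}+r^{1/2}[v]_{C^{1/2}(\bT)}\bigr),
\]
which, optimized in $r$, yields $\|v\|_{L^\infty(\bT)}\le C\|v\|_{L^2(\bT)}^{1/2}[v]_{C^{1/2}(\bT)}^{1/2}$ provided $\|v\|_{L^2}\le L[v]_{C^{1/2}}$ (the opposite case being trivial after absorbing constants). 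Applying this with $v:=\check u_\eps(t_1,\cdot)-\check u_\eps(t_2,\cdot)$ and invoking Lemma~\ref{lem:reg-time}, I get
\[
\|\check u_\eps(t_1)-\check u_\eps(t_2)\|_{L^\infty(\bT)}\le C|t_1-t_2|^{1/8}\,[\check u_\eps]_{C^{1/4}([0,T];L^2(\bT))}^{1/2}\,\Bigl(\sup_{t\in[0,T]}\|\check u_\eps(t)\|_{H^1(\bT)}\Bigr)^{1/2},
\]
which, combined with the spatial bound, gives a pointwise estimate
\[
\|\check u_\eps\|_{C^{1/8,1/2}(Q_T)}\le C\Bigl([\check u_\eps]_{C^{1/4}([0,T];L^2)}^{1/2}\sup_{t\in[0,T]}\|\check u_\eps(t)\|_{H^1(\bT)}^{1/2}+\sup_{t\in[0,T]}\|\check u_\eps(t)\|_{H^1(\bT)}+|\A(\check u_\eps^{(0)})|\Bigr).
\]

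Finally, to obtain the $p'$-th moment estimate for $p'\in[1,2p)$, I would raise the above to the power $p'$ and use Cauchy--Schwarz: the cross term is bounded by
\[
\bigl(\check\E [\check u_\eps]_{C^{1/4}([0,T];L^2)}^{p'}\bigr)^{1/2}\bigl(\check\E\sup_{t\in[0,T]}\|\check u_\eps(t)\|_{H^1(\bT)}^{p'}\bigr)^{1/2}.
\]
The first factor is controlled by Lemma~\ref{lem:reg-time} (which applies for any exponent strictly below $2p$), and the second by Proposition~\ref{prop:weak-eps} combined with mass conservation, both uniformly in $\eps$, under the assumed moment bounds on $u^{(0)}$. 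The mean value term is handled by the initial data moment assumption on $\A(u^{(0)})$.

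The main (mild) obstacle is bookkeeping the integrability exponents: one must verify that the moment assumptions on $u^{(0)}$ in Lemma~\ref{lem:reg-time} are precisely those under which the right-hand side of the Cauchy--Schwarz estimate is finite for every $p'<2p$; this is where the factor $n+2$ in the initial-data moment assumption enters, matching the bound $\fpp<2\fp$ furnished by Lemma~\ref{lem:reg-time}. Otherwise the argument is a direct interpolation following \cite[Lemma~4.11]{FischerGruen2018}, the only novelty here being that the $L^{p'}$-bound propagates for any $p'<2p$ (rather than just $p'=2$) thanks to the improved time-regularity estimate \eqref{eq:reg-time-1}.
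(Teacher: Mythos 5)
Your proposal is correct and reproduces, in spelled-out form, the interpolation argument that the paper merely cites (the reference to \cite[Lemma~4.11]{FischerGruen2018}): the spatial H\"older continuity comes from $H^1(\bT)\hookrightarrow C^{1/2}(\bT)$, the temporal H\"older continuity in the sup-norm from interpolating the $C^{1/4}([0,T];L^2)$ bound of Lemma~\ref{lem:reg-time} against the uniform-in-time $C^{1/2}$ spatial bound, and the $p'$-th moment for all $p'\in[1,2p)$ is recovered exactly because \eqref{eq:reg-time-1} supplies these moments directly (the paper emphasizes that this is the only way the present corollary goes beyond the $p'=2$ version in the cited reference). The exponent bookkeeping you flag at the end is indeed consistent: Proposition~\ref{prop:weak-eps} with $\mathfrak p=(n+2)p$ controls $\check\E\sup_t\|\check u_\eps(t)\|_{H^1}^{(n+2)p}$, which dominates any $p'<2p$.
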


In the next proposition we will consider the space of all functions $u : Q_T \to \R$ such that $u \in  {C^{\frac \gamma 4,\gamma}}(Q_T)$ for all $\gamma \in (0, 1/2)$, that is, 
$$
{C^{\frac{1}{8} -, \frac{1}{2} -}}(Q_T):=\bigcap\limits_{\gamma \in (0, 1/2)} {C^{\frac \gamma 4,\gamma}}(Q_T).
$$
We endow   ${C^{\frac{1}{8} -, \frac{1}{2} -}}(Q_T)$ with the  topology generated by the metric  
$$
d(u, v) := \sum_{n=1}^\infty 2^{-n}  \big( \|u -v \|_{C^{\frac{\gamma_n}{4}, \gamma_n,}(Q_T)} \wedge 1 \big),
$$
where $\gamma_n= \frac 1 2 - \frac{1}{2^{n+1}}$. 

\begin{remark}      \label{rem:Polish}
Despite the fact that for each $\gamma< \frac 1 2$, ${C^{\frac \gamma 4,\gamma}}(Q_T)$ is not separable, the space ${C^{\frac{1}{8} -, \frac{1}{2} -}}(Q_T)$ is separable: 
 If $u \in C^{ \frac{1}{8}-, \frac{1}{2}-}$ there exists $u_n \in C^\infty(Q_T)$ such that for all $\gamma < 1/2$
 $$
\lim_{n \to \infty}  \|u_n-u\|_{{C^{\frac \gamma 4,\gamma}}(Q_T)}=0.
 $$
Moreover, there exists a countable set $\mathcal{D} \subset C^\infty(Q_T)$ such that for all $v \in C^\infty(Q_T)$ and all $\eps>0$, there exists $\tilde{v} \in \mathcal{D}$ such that
$\| \tilde{v}- v \|_{C^1(Q_T)} \leq \eps$. It follows that $\mathcal{D}$ is dense in ${C^{\frac{1}{8} -, \frac{1}{2} -}}(Q_T)$.

In addition, it is complete,  since ${C^{\frac{\gamma_n}{4}, \gamma_n}(Q_T)} $ is complete for each $n$. Therefore it is a Polish space. 

 Finally,  since every bounded sequence in ${C^{\frac{1}{8}, \frac{1}{2}}}(Q_T)$ has a subsequence that converges in ${C^{\frac \gamma 4,\gamma}}(Q_T)$, for all $\gamma < \frac 1 2$, it follows that the embedding $ C^{\frac{1}{8}, \frac{1}{2}}(Q_T) \subset {C^{\frac{1}{8} -, \frac{1}{2} -}}(Q_T)$ is compact.
\end{remark}

\begin{proposition}[Point-wise convergence]\label{prop:point-eps}
Let $T \in (0,\infty)$, $n \in \left[\frac 8 3,4\right)$, $\eps \in (0,1]$, $p> 1$, $q > 1$ satisfying $q \ge \max\left\{\frac{1}{4-n},\frac{n-2}{2n-5}\right\}$. Suppose that 
\[
u^{(0)} \in L^{(n+2)p}\left(\Omega,\cF_0,\P;H^1(\bT)\right)
\]
such that $u^{(0)} \ge 0$, $\diff\P$-almost surely, $\E \abs{\Aunull}^{2(n+2)pq} < \infty$, and $\E \norm{G_0\left(u^{(0)}\right)}_{L^1(\bT)}^{(n+2)pq} < \infty$. Let $\check{u}_\eps$ be the weak solution constructed in Proposition~\ref{prop:weak-eps}. Further define $\check{J} _\eps := F_\eps(\check{u}_\eps) \, \D_x^3 \check{u}_\eps$ (pseudo-flux density) and $\check W_\eps  := \sumkz\sigma_k \check  \beta^k_\eps $. Then, up to taking a subsequence of $u_\eps$, on the probability space $([0,1], \mathcal{B}([0,1]),  \lambda_{[0,1]})$, there exist random variables $\tilde u, \tilde u_\eps \colon [0,1] \to  C^{\frac{1}{2}-, \frac{1}{8}- }(Q_T)$, $\tilde J, \tilde J_\eps \colon [0,1] \to L^2(Q_T)$, $\tilde W, \tilde W_\eps \colon [0,1] \to C\left([0,T];H^2(\bT)\right)$, with
\begin{equs}      \label{eq:same_distribution}
(\tilde u_\eps, \tilde J_\eps, \tilde W_\eps) \sim (\check{u}_\eps, \check{J}_\eps, \check{W}_\eps ),
\end{equs}
such that
\begin{subequations}
\begin{align}
\tilde u_\eps(\omega) &\to \tilde u(\omega) \quad \mbox{as} \quad \eps \searrow 0 \quad \mbox{in}  \quad {C^{\frac{1}{8}-, \frac{1}{2}- }}(Q_T), \label{conv-u-eps-0} \\
\tilde J_\eps(\omega) &\rightharpoonup \tilde J(\omega) \quad \mbox{as} \quad \eps \searrow 0 \quad \mbox{in} \quad L^2\left(Q_T\right), \label{conv-j-eps-0} \\
\tilde W_\eps(\omega) &\to \tilde W(\omega) \quad \mbox{as} \quad \eps \searrow 0 \quad \mbox{in} \quad C\left([0,T];H^2(\bT)\right) \label{conv-w-eps-0}
\end{align}
\end{subequations}
for every $\omega \in [0,1]$. It holds
\begin{equation}\label{hoelder-point}
\tilde u\in L^{p'}(\tilde \Omega, \tilde\F, \tilde\P; C^{\gamma, \frac \gamma 4}(Q_T)),\quad \mbox{for all} \quad \gamma\in\left(0,\tfrac12\right) \quad \mbox{and} \quad p'\in \left[1,2p\right).
\end{equation}
\end{proposition}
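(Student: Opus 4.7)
The argument is the usual tightness and Jakubowski--Skorokhod procedure, built on the uniform energy-entropy estimate of Proposition~\ref{prop:weak-eps} together with the H\"older bound of Corollary~\ref{cor:hoelder}.

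First, I would establish tightness of the joint laws of $(\check u_\eps, \check J_\eps, \check W_\eps)$ on the product space
\[
\cZ := C^{\frac 1 8 -, \frac 1 2 -}(Q_T) \, \times \, L^2_{\mathrm{w}}(Q_T) \, \times \, C([0,T];H^2(\bT)),
\]
where $L^2_{\mathrm{w}}(Q_T)$ denotes $L^2(Q_T)$ equipped with the weak topology. Estimate \eqref{eq:greg-10} combined with the compact embedding $C^{\frac 1 8, \frac 1 2}(Q_T) \hookrightarrow C^{\frac 1 8 -, \frac 1 2 -}(Q_T)$ noted in Remark~\ref{rem:Polish}, together with Chebyshev's inequality, yields tightness of the first marginal; the bound on $\check J_\eps$ in $L^2(Q_T)$ provided by \eqref{eq:energy-entropy}, coupled with the fact that bounded subsets of $L^2(Q_T)$ are metrisable and relatively compact in the weak topology, gives tightness of the second marginal; and tightness of the third marginal is immediate because $\check W_\eps$ has a fixed $\eps$-independent law. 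Tightness of the product then follows from tightness of the marginals.

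Second, I would appeal to Jakubowski's variant of Skorokhod's representation theorem \cite{Jakubowski}, already used in Proposition~\ref{prop:weak-eps-r}, which is applicable to the quasi-Polish space $\cZ$. This yields, along a subsequence, random variables $(\tilde u_\eps, \tilde J_\eps, \tilde W_\eps)$ on a common probability space, with joint law identical to that of $(\check u_\eps, \check J_\eps, \check W_\eps)$ so that \eqref{eq:same_distribution} holds, and converging almost surely in $\cZ$ to some $(\tilde u, \tilde J, \tilde W)$. By the Borel isomorphism theorem the underlying space can be chosen to be $([0,1], \cB([0,1]), \lambda_{[0,1]})$, and by modifying all the variables on a common null set we obtain the convergences \eqref{conv-u-eps-0}--\eqref{conv-w-eps-0} for every $\omega \in [0,1]$.

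Finally, the H\"older regularity \eqref{hoelder-point} of the limit $\tilde u$ follows from \eqref{eq:greg-10}, \eqref{eq:same_distribution}, the convergence \eqref{conv-u-eps-0}, and Fatou's lemma, using the lower semicontinuity of the $C^{\frac{\gamma}{4},\gamma}(Q_T)$-norm under convergence in the weaker space $C^{\frac 1 8 -, \frac 1 2 -}(Q_T)$:
\[
\tilde\E \norm{\tilde u}_{C^{\frac{\gamma}{4},\gamma}(Q_T)}^{p'} \le \liminf_{\eps \searrow 0} \tilde\E \norm{\tilde u_\eps}_{C^{\frac{\gamma}{4},\gamma}(Q_T)}^{p'} < \infty
\]
for every $\gamma \in (0, 1/2)$ and $p' \in [1, 2p)$. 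The delicate point is the use of the weak topology on $L^2(Q_T)$, which is not globally metrisable; this is handled precisely as in the proof of Proposition~\ref{prop:weak-eps-r} by restricting to the metrisable bounded subsets on which the random variables concentrate.
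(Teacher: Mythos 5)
Your proposal is correct and follows essentially the same route as the paper: tightness of the three marginals (via Corollary~\ref{cor:hoelder} and the compact embedding of Remark~\ref{rem:Polish} for $\check u_\eps$, via the uniform $L^2$ bound from Proposition~\ref{prop:weak-eps} for $\check J_\eps$, and via the fixed law for $\check W_\eps$), then Jakubowski's representation theorem on the quasi-Polish product space, and Fatou's lemma with lower semicontinuity of the H\"older norm for \eqref{hoelder-point}. The paper phrases the $\check J_\eps$ tightness slightly differently (an explicit Markov/Chebyshev bound on $\check\P\{\norm{\check J_\eps}_{L^2(Q_T)}>R\}$), but this is the same estimate you invoke.
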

\begin{proof}
It suffices to  show the tightness of the laws $\mu_{\check{u}_\eps}$, $\mu_{\check{J}_\eps}$, and $\mu_{\check W_\eps }$ corresponding to the families $\check{u}_\eps$, $\check{J}_\eps$, and  $\check{W}_\eps$. The proposition then follows by applying \cite[Theorem~2]{Jakubowski}.

Tightness of the law $\mu_{\check W}$ follows because $\mu_{\check W}$ is a Radon measure in the Polish space $C\left([0,T];H^2(\bT)\right)$ implying regularity from interior and thus tightness.

Tightness for $\mu_{\check{u}_\eps}$ as a family of measures on ${C^{\frac{1}{8}-, \frac{1}{2}- }}(Q_T)$ is a direct consequence of Corollary~\ref{cor:hoelder}, in particular estimate \eqref{eq:greg-10} (see also Remark \ref{rem:Polish}) . 

By Markov's inequality we have for any $R \in (0,\infty)$, using conservation of mass (cf.~Remark~\ref{rem:conservation-of-mass}) and the Sobolev embedding theorem,
\begin{eqnarray*}
\lefteqn{\check \P\left\{\norm{J_\eps}_{L^2(Q_T)} > R\right\}} \\
&\le& \frac{1}{R}\check  \E \norm{\check{J}_\eps}_{L^2(Q_T)} \\
&\le& \frac{C}{R} \, \check \E \left[1 + \norm{F_\eps(\check{u}_\eps) \D_x^3 \check{u}_\eps}_{L^2(Q_T)}^{n+2}\right] \\
&\stackrel{\eqref{eq:energy-entropy}}{\le}& \frac{C}{R} \, \E \left[1 + \abs{\Aunull}^{2(n+2)q} + \norm{G_0(u^{(0)})}_{L^1(\bT)}^{(n+2)q} + \norm{\D_x u^{(0)}}_{L^2(\bT)}^{n+2}\right] \to 0 \quad \mbox{as $R \to \infty$,}
\end{eqnarray*}
where we have used Proposition~\ref{prop:weak-eps}. 

Finally, \eqref{hoelder-point} holds by virtue of \eqref{conv-u-eps-0}  and Corollary \ref{cor:hoelder}. 
This finishes the proof. 
\end{proof}

For what follows, we define $\tilde\F = (\tilde \cF_t)_{t \in [0,T]}$ as the augmented filtration of $(\tilde \cF_t')_{t \in [0,T]}$, where
\begin{equation}\label{cal-f-t-prime}
\tilde \cF_t' := \sigma \big(\tilde u(t'),  \tilde W(t') \colon 0 \le t' \big ).
\end{equation}
We further define
\begin{equation}\label{beta-k-eps-beta-k}
\tilde \beta^k_\eps(t) := \frac{ \big(\sigma_k,\tilde W_\eps(t) \big)_{H^2(\bT)}}{ \norm{\sigma_k}_{H^2(\bT)}^2} \quad \mbox{and} \quad \tilde \beta^k(t) := \frac{\big( \sigma_k,\tilde W(t) \big) _{H^2(\bT)}}{ \norm{\sigma_k}_{H^2(\bT)}^2}.
\end{equation}
Then, we work in the filtered probability space
\[
\big(\tilde\Omega,\tilde\cF,\tilde\F,\tilde\P\big ) := \big([0,1],\cB([0,1]), (\tilde \cF_t )_{t \in [0,T]},\lambda_{[0,1]}\big).
\]
\begin{lemma}     \label{lem:bm}
The $\tilde \beta^k$ are mutually independent, standard, real-valued $\big(\tilde\cF_t\big)$-Wiener processes.
\end{lemma}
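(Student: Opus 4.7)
First, I would observe that, by \eqref{ev_lap} and \eqref{def-sigma-k}, the family $(\sigma_k)_{k\in\Z}=(\nu_k e_k)_{k\in\Z}$ is orthogonal in $H^2(\bT)$, so that on the original probability space the representation $\check W_\eps=\sum_{k\in\Z}\sigma_k\check\beta^k_\eps$ gives
\[
\frac{(\sigma_k,\check W_\eps(t))_{H^2(\bT)}}{\norm{\sigma_k}_{H^2(\bT)}^2}=\check\beta^k_\eps(t)\qquad\mbox{for all $k\in\Z$, $t\in[0,T]$.}
\]
Since the evaluation map $w\mapsto(\sigma_k,w(\cdot))_{H^2(\bT)}/\norm{\sigma_k}_{H^2(\bT)}^2$ is continuous from $C([0,T];H^2(\bT))$ into $C([0,T])$, the equality in law \eqref{eq:same_distribution} transports this identity and yields that, for each $\eps\in(0,1]$, the $(\tilde\beta^k_\eps)_{k\in\Z}$ defined by \eqref{beta-k-eps-beta-k} are themselves mutually independent standard real-valued Wiener processes on $(\tilde\Omega,\tilde\cF,\tilde\P)$.

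Next, I would use \eqref{conv-w-eps-0} and the continuity of the same evaluation map to deduce that, for every $\tilde\omega\in\tilde\Omega$,
\[
\tilde\beta^k_\eps(\tilde\omega)\to\tilde\beta^k(\tilde\omega)\quad\mbox{in $C([0,T])$ as $\eps\searrow 0$.}
\]
In particular each $\tilde\beta^k$ has continuous paths and, for any finite $K\subset\Z$ and $0\le t_1<\cdots<t_N\le T$, the vector $(\tilde\beta^k(t_j))_{k\in K,\,1\le j\le N}$ is the almost-sure limit of centred Gaussian vectors with the standard Brownian covariance $\min(t_i,t_j)\delta_{kl}$, hence is itself Gaussian with the same covariance; this already gives that $(\tilde\beta^k)_{k\in\Z}$ are mutually independent standard real-valued Wiener processes at the level of finite-dimensional distributions. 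Adaptedness to $\tilde\F$ is immediate from \eqref{beta-k-eps-beta-k} and \eqref{cal-f-t-prime}.

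The main task will then be to upgrade this to the $\tilde\F$-Wiener-process property, i.e.\ to show that the increments $\tilde\beta^k(t)-\tilde\beta^k(s)$ are independent of $\tilde\cF_s$ for $0\le s\le t\le T$. Following the standard test-function argument (cf.\ \cite[Proposition~5.3]{GessGnann2020} or \cite[Lemma~5.7]{FischerGruen2018}), I would fix such $s<t$, indices $k,l\in\Z$, and a bounded continuous functional $\Phi$ on $C([0,s]\times\bT)\times C([0,s];H^2(\bT))$, and verify
\begin{align*}
\tilde\E\bigl[\Phi\bigl(\tilde u|_{[0,s]},\tilde W|_{[0,s]}\bigr)\bigl(\tilde\beta^k(t)-\tilde\beta^k(s)\bigr)\bigr]&=0,\\
\tilde\E\bigl[\Phi\bigl(\tilde u|_{[0,s]},\tilde W|_{[0,s]}\bigr)\bigl((\tilde\beta^k(t)-\tilde\beta^k(s))(\tilde\beta^l(t)-\tilde\beta^l(s))-(t-s)\delta_{kl}\bigr)\bigr]&=0.
\end{align*}
For each $\eps>0$ the analogous identities with $\tilde u_\eps,\tilde W_\eps,\tilde\beta^k_\eps$ in place of $\tilde u,\tilde W,\tilde\beta^k$ hold immediately by \eqref{eq:same_distribution}, since they reduce to the corresponding statements on the original probability space, where $(\check\beta^k_\eps)_{k\in\Z}$ are genuine independent Brownian motions with respect to a filtration to which $\check u_\eps$ is adapted. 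I would then pass to the limit $\eps\searrow 0$ via the almost-sure convergences \eqref{conv-u-eps-0}, \eqref{conv-w-eps-0}, $\tilde\beta^k_\eps\to\tilde\beta^k$, together with Vitali's theorem; the uniform-integrability hypothesis is verified by the explicit Gaussian moments of $\tilde\beta^k_\eps(t)-\tilde\beta^k_\eps(s)\sim\mathcal{N}(0,t-s)$, which are $\eps$-independent.

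The hard part I anticipate will be essentially bookkeeping: choosing Polish spaces in which both the convergence and the continuity of the restriction functionals $(\,\cdot\,|_{[0,s]})$ are valid, and extending the resulting martingale identity from the raw filtration $\tilde\cF_s'$ to its augmentation $\tilde\cF_s$ via standard monotone-class manipulations. Once these are in place, combining the martingale identities with the already-established path continuity and Gaussian covariance yields the claim through L\'evy's characterization of Brownian motion.
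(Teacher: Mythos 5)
Your argument is correct and follows essentially the same route as the references the paper cites for this lemma (e.g.\ \cite[Proposition~5.3]{GessGnann2020}, \cite[Lemma~5.7]{FischerGruen2018}): identify $\tilde\beta^k_\eps$ as the $H^2$-projection of $\tilde W_\eps$ onto $\sigma_k$ (using orthogonality of the $e_k$ in $H^2(\bT)$), pass the pointwise convergence \eqref{conv-w-eps-0} through the continuous evaluation map, verify the martingale/quadratic-variation identities in the limit by Vitali with Gaussian moment bounds, and conclude via L\'evy's characterization together with the standard passage from $\tilde\cF_{t}'$ to its augmentation. The paper merely refers the reader to these sources rather than writing out the details, so your proposal reproduces the intended proof.
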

\begin{proof}
The reasoning is quite standard and contained in detail for instance in \cite[Proposition~5.3]{GessGnann2020} or \cite[Proof of Proposition~5.4]{DareiotisGess} or \cite[Lemma~5.7]{FischerGruen2018}.
\end{proof}
\begin{proposition}[Weak convergence, a-priori estimate, non-negativity, continuity]\label{prop:conv-weak-eps}
Let $T \in (0,\infty)$, $n \in \left[\frac 8 3,4\right)$, $\eps \in (0,1]$, $p>1$, $q > 1$ satisfying $q \ge \max\left\{\frac{1}{4-n},\frac{n-2}{2n-5}\right\}$. Suppose that 
\[
u^{(0)} \in L^{(n+2)p}\left(\Omega,\cF_0,\P;H^1(\bT)\right)
\]
such that $u^{(0)} \ge 0$, $\diff\P$-almost surely,  $\E \abs{\Aunull}^{2(n+2)pq} < \infty$, and $\E \norm{G_0\left(u^{(0)}\right)}_{L^1(\bT)}^{(n+2)pq} < \infty$.
  With the notation of Proposition~\ref{prop:point-eps}, up to taking subsequences, it holds that
\begin{align*}
\partial_x \tilde u_\eps \overset{*}{\rightharpoonup} \partial_x \tilde u \quad & \mbox{in} \quad L^{(n+2)p}\big(\tilde\Omega,\tilde\cF,\tilde\P;L^\infty (0,T;L^2(\bT))\big),
 \\
\partial_x^2 \tilde u_\eps \overset{*}{\rightharpoonup} \partial_x^2 \tilde u \quad & \mbox{in} \quad L^{2(n+2)pq}\big(\tilde\Omega,\tilde\cF,\tilde\P;L^2(Q_T)\big), 
\\
\tilde J_\eps \overset{*}{\rightharpoonup} \tilde J \quad & \mbox{in} \quad L^{(n+2)p}\big(\tilde\Omega,\tilde\cF,\tilde\P;L^2(Q_T)\big)
\end{align*}
as $\eps \searrow 0$, and the energy-entropy estimate
\begin{align}
& \tilde\E \left[\sup_{t \in [0,T]} \norm{\D_x \tilde u(t)}_{L^2(\bT)}^{(n+2)p} + \sup_{t \in [0,T]} \norm{G_0\left(\tilde u(t)\right)}_{L^1(\bT)}^{(n+2)pq} + \norm{\D_x^2 \tilde u}_{L^2(Q_T)}^{2(n+2)pq}+\|\tilde J\|_{L^2(Q_T)}^{(n+2)p}\right] \nonumber \\
& \quad \le C \, \E \left[1 + \abs{\Aunull}^{2(n+2)pq} + \norm{G_0(u^{(0)})}_{L^1(\bT)}^{(n+2)pq} + \norm{\D_x u^{(0)}}_{L^2(\bT)}^{(n+2)p}\right], \label{eq:energy-entropy-lim}
\end{align}
is satisfied, with  a constant  $C < \infty$ depending only on  $p,q,\sigma=(\sigma_k)_{k\in\Z}, n,L,$ and $T$. It holds $\tilde u \ge 0$ and $\abs{\{\tilde u = 0\}} = 0$, $\diff\P$-almost surely. Furthermore, $\tilde u$ is a continous $H^1_\mathrm{w}(\bT)$-valued process.
\end{proposition}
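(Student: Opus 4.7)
The plan is to combine the uniform-in-$\eps$ energy-entropy estimate of Proposition~\ref{prop:weak-eps} (transferred to $(\tilde u_\eps, \tilde J_\eps)$ via \eqref{eq:same_distribution}) with the $\tilde\P$-almost sure uniform convergence $\tilde u_\eps \to \tilde u$ on $Q_T$ supplied by Proposition~\ref{prop:point-eps}. The bound \eqref{eq:energy-entropy} will be applied with $\norm{G_\eps(u^{(0)})}_{L^1(\bT)}$ majorized by $\norm{G_0(u^{(0)})}_{L^1(\bT)}$ via Remark~\ref{rem:entropychain}, so that the right-hand side of \eqref{eq:energy-entropy-lim} dominates the $\eps$-approximate bounds.

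First, I would extract weak(-$*$) convergent subsequences. For $\partial_x^2 \tilde u_\eps$ in the reflexive space $L^{2(n+2)pq}(\tilde\Omega;L^2(Q_T))$ and for $\tilde J_\eps$ in $L^{(n+2)p}(\tilde\Omega;L^2(Q_T))$, Banach-Alaoglu provides weakly convergent subsequences. For $\partial_x \tilde u_\eps$ in $L^{(n+2)p}(\tilde\Omega;L^\infty(0,T;L^2(\bT)))$, the target is a dual of a separable space, so Banach-Alaoglu again yields a weak-$*$ subsequential limit. Testing against $C_c^\infty(Q_T)$ and using $\tilde u_\eps \to \tilde u$ in $\mathcal{D}'(Q_T)$ (a consequence of uniform convergence on $Q_T$), identifies the limits as $\partial_x \tilde u$ and $\partial_x^2 \tilde u$; the limit of $\tilde J_\eps$ is called $\tilde J$.

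Next, the bounds on $\tilde\E \sup_t\|\partial_x \tilde u(t)\|_{L^2(\bT)}^{(n+2)p}$, $\tilde\E\|\partial_x^2\tilde u\|_{L^2(Q_T)}^{2(n+2)pq}$ and $\tilde\E\|\tilde J\|_{L^2(Q_T)}^{(n+2)p}$ in \eqref{eq:energy-entropy-lim} all follow from weak(-$*$) lower semicontinuity of the norms combined with Fatou's lemma applied to the right-hand side of \eqref{eq:energy-entropy}. The main obstacle is the entropy term $\tilde\E\sup_t\norm{G_0(\tilde u(t))}_{L^1(\bT)}^{(n+2)pq}$, since $G_\eps \to G_0 = +\infty$ on $(-\infty,0]$ so there is no direct passage. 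I would address this with a double limit: since $\eps \mapsto F_\eps(r)$ is increasing in $\eps$ for every $r \in \R$, we have $G_\eps(r) \ge G_\delta(r)$ whenever $\eps \le \delta$, so
\[
\tilde\E \sup_{t \in [0,T]} \norm{G_\delta(\tilde u_\eps(t))}_{L^1(\bT)}^{(n+2)pq} \le \tilde\E \sup_{t \in [0,T]} \norm{G_\eps(\tilde u_\eps(t))}_{L^1(\bT)}^{(n+2)pq} \le C,
\]
uniformly in $\delta \in (0,1)$ and $\eps \le \delta$. Now $G_\delta$ is continuous and finite on all of $\R$ and $\tilde u_\eps \to \tilde u$ uniformly on $Q_T$ $\tilde\P$-a.s., hence $\sup_t \norm{G_\delta(\tilde u_\eps(t))}_{L^1(\bT)} \to \sup_t \norm{G_\delta(\tilde u(t))}_{L^1(\bT)}$ $\tilde\P$-a.s. as $\eps \searrow 0$; Fatou's lemma transfers the bound to $G_\delta(\tilde u)$ uniformly in $\delta$, and monotone convergence as $\delta \searrow 0$ (using $G_\delta \nearrow G_0$ pointwise) yields the desired bound on $G_0(\tilde u)$.

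The non-negativity $\tilde u \ge 0$ and $\abs{\{\tilde u = 0\}} = 0$ are then immediate consequences of the entropy bound, because $\sup_t\norm{G_0(\tilde u(t))}_{L^1(\bT)} < \infty$ $\tilde\P$-a.s.\ forces $\tilde u(t,x) > 0$ for $\diff t \otimes \diff x$-a.e.\ $(t,x) \in Q_T$ by the definition of $G_0$. Finally, $H^1_\mathrm{w}(\bT)$-continuity of $\tilde u$ follows from a Strauss-type argument: the uniform $H^1$-bound just established combined with $\tilde u \in C([0,T];L^2(\bT))$ (a consequence of uniform convergence of $\tilde u_\eps$ on $Q_T$) implies, by $L^2$-density in $H^{-1}(\bT)$ and the uniform $H^1$-bound, that $t \mapsto \langle \tilde u(t),\varphi\rangle$ is continuous for every $\varphi \in H^{-1}(\bT)$.
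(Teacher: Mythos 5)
Your proposal is correct and follows essentially the same route as the paper: uniform-in-$\eps$ bounds from Proposition~\ref{prop:weak-eps}, Banach--Alaoglu for weak-$*$ subsequences, identification of limits via the a.s.\ uniform convergence $\tilde u_\eps \to \tilde u$ from Proposition~\ref{prop:point-eps}, and Fatou/lower-semicontinuity for \eqref{eq:energy-entropy-lim}. Your double-limit argument for the entropy term (using monotonicity $G_\eps \ge G_\delta$ for $\eps \le \delta$, pathwise convergence with $G_\delta$ fixed, then monotone convergence $G_\delta \nearrow G_0$) is a useful and correct unpacking of the paper's terse ``Fatou's lemma,'' and the only minor points you glide over are (i) that one must check the weak-$*$ limit of $\tilde J_\eps$ in $L^{(n+2)p}(\tilde\Omega;L^2(Q_T))$ coincides with the a.s.\ weak $L^2(Q_T)$-limit already named $\tilde J$ in Proposition~\ref{prop:point-eps}, and (ii) that passing from ``$\tilde u > 0$ a.e.'' to ``$\tilde u \ge 0$ everywhere, $\diff\tilde\P$-a.s.'' uses the path-continuity $\tilde u \in C(Q_T)$.
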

\begin{proof}
From Proposition~\ref{prop:point-eps} we derive that, up to taking subsequences, we have $\tilde u_\eps(\omega) \to \tilde u(\omega)$ in $C^{\gamma, \frac \gamma 4}\left(Q_T\right)$ for $\gamma < \frac 1 2$, and $\tilde J_\eps(\omega) \rightharpoonup \tilde J(\omega)$ in $L^2\left(Q_T\right)$, $\diff\tilde\P$-almost surely. From \eqref{eq:energy-entropy} of Proposition~\ref{prop:weak-eps} we can conclude by compactness that, up to taking subsequences once more, we have
\begin{align*}
\partial_x \tilde u_\eps &\overset{*}{\rightharpoonup} \tilde u_1 \quad \mbox{in} \quad L^{(n+2)p}\big(\tilde\Omega,\tilde\cF,\tilde\P;L^\infty (0,T;L^2(\bT))\big), 
\\
\partial_x^2 \tilde u_\eps &\overset{*}{\rightharpoonup} \tilde u_2 \quad \mbox{in} \quad L^{2(n+2)pq}\big(\tilde\Omega,\tilde\cF,\tilde\P;L^2(Q_T)\big), 
\\
\tilde J_\eps &\overset{*}{\rightharpoonup} \tilde J_1 \quad \mbox{in} \quad L^{(n+2)p}\big(\tilde\Omega,\tilde\cF,\tilde\P;L^2(Q_T)\big)
\end{align*}
as $\eps \searrow 0$. From \eqref{eq:greg-10} of Corollary~\ref{cor:hoelder} we have that $\tilde\E \norm{\tilde u_\eps}_{C^{\gamma,\frac\gamma4}(Q_T)}^{p'}$ for $p' \in [1,2p)$ is uniformly bounded in $\eps$. Hence, we obtain with Vitali's convergence theorem $\tilde\E \norm{\tilde u_\eps - \tilde u}_{C^{\gamma,\frac\gamma4}(Q_T)} \to 0$ as $\eps \searrow 0$.
Thus, we have for $j \in \{1,2\}$ and $\tilde\phi \in L^\infty\big(\tilde\Omega,\tilde\cF,\tilde\P;C^2(Q_T)\big)$,
\[
\tilde\E \left\langle\partial_x^j \tilde u_\eps - \tilde u_j,\tilde\phi\right\rangle_{L^1\left(0,T;L^1(\bT)\right) \times L^\infty\left(0,T;L^\infty(\bT)\right)} \to 0 \quad \mbox{as} \quad \eps \searrow 0
\]
by weak-$*$-convergence while by the Cauchy-Schwarz inequality
\begin{eqnarray*}
\lefteqn{\abs{\tilde\E \left\langle\tilde u_\eps - \tilde u,\partial_x^j \tilde\phi\right\rangle_{L^1\left(0,T;L^1(\bT)\right) \times L^\infty\left(0,T;L^\infty(\bT)\right)}}} \\
&\le& \tilde\E \norm{\tilde u_\eps - \tilde u}_{C(Q_T)} \norm{\D_x^j \tilde\phi}_{L^\infty(\tilde\Omega\times Q_T)} \to 0 \quad \mbox{as} \quad \eps \searrow 0.
\end{eqnarray*}
This implies $\tilde u_j = \D_x^j \tilde u$, $\diff\tilde \P$-almost surely.

Since  $\tilde J_\eps(\omega) \rightharpoonup \tilde J(\omega)$ in $L^2\left(Q_T\right)$, $\diff\tilde\P$-almost surely, and $\tilde J_\eps \in L^{(n+2)p}\big(\tilde\Omega,\tilde\cF,\tilde\P;L^2(Q_T)\big)$ is uniformly bounded, we have%
\[
\big(\tilde J_\eps,\phi\big)_{L^2(Q_T)} \to \big(\tilde J,\phi\big)_{L^2(Q_T)}
\]
strongly in $L^{(n+2)p'}\big(\tilde\Omega,\tilde\cF,\tilde\P\big)$ for any $p'<p$ and $\phi\in L^2\left(Q_T\right)$. Since also $\big(\tilde J_\eps,\phi\big)_{L^2(Q_T)} \rightharpoonup \big(\tilde J_1,\phi\big)_{L^2(Q_T)}$ weakly in $L^{(n+2)p'}\big(\tilde\Omega,\tilde\cF,\tilde\P\big)$ we have $\tilde J_1 = \tilde J$.

Estimate~\eqref{eq:energy-entropy-lim} follows from \eqref{eq:energy-entropy} of Proposition~\ref{prop:weak-eps} by weak lower-semicontinuity of the appearing norms and Fatou's lemma.

The fact that $\tilde u \ge 0$ and $\abs{\{\tilde u = 0\}} = 0$, $\diff\tilde\P$-almost surely, is a consequence of $\tilde u \in C(Q_T)$, $\diff\tilde\P$-almost surely, $\tilde u(0,\cdot) \ge 0$, $\diff\tilde\P$-almost surely, and finiteness of $\tilde\E \sup_{t \in [0,T]} \norm{G_0\left(\tilde u\right)}_{L^1(\bT)}^{(n+2)pq}$ because of \eqref{eq:energy-entropy-lim}.

The fact that $\tilde u$ is an $H^1_\mathrm{w}(\bT)$-valued process follows at once from $\tilde u \in L^\infty\left(0,T;H^1(\bT)\right) \cap C^{\gamma, \frac\gamma 4}(Q_T)$, $\diff\tilde\P$-almost surely. 
\end{proof}

\begin{proposition}\label{prop:identify-flux} Assume the conditions of Proposition~\ref{prop:conv-weak-eps} and $u^{(0)} \in L^{2np}\left(\Omega,\cF_0,\P;H^1(\bT)\right)$. Then, the distributional derivative $\partial_{x}^{3}\tilde{u}$ fulfills $\partial_{x}^{3}\tilde{u}\in L_{\mathrm{loc}}^{2}\left(\left\{ \tilde{u}>0\right\} \right)$ and we can identify $\tilde{J}_{\eps}=F_{\eps}\left(\tilde{u}_{\eps}\right) (\partial_{x}^{3}\tilde{u}_{\eps})$ and $\tilde{J}=\ind_{\{\tilde{u}>0\}}\tilde{u}^{\frac n 2}(\partial_{x}^{3}\tilde{u})$. In particular, the following energy-dissipation estimate holds 
\begin{align}
& \tilde\E \Big[\|\ind_{\{\tilde{u}>0\}}\tilde{u}^{\frac n 2}(\partial_{x}^{3}\tilde{u})\|_{L^2(Q_T)}^{(n+2)p}\Big] \nonumber \\
& \quad \le C \, \E \left[1 + \norm{\D_x u^{(0)}}_{L^2(\bT)}^{(n+2)p} + \abs{\Aunull}^{2(n+2)pq} + \norm{G_0(u^{(0)})}_{L^1(\bT)}^{(n+2)pq}\right]. \label{eq:flux-dissipation-lim}
\end{align}
\end{proposition}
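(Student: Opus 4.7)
The plan is in three steps: first identify $\D_x^3\tilde u$ as an $L^2_{\mathrm{loc}}$-function on the open set $\{\tilde u>0\}$; then match $\tilde J$ with $\ind_{\{\tilde u>0\}}\tilde u^{n/2}\D_x^3\tilde u$ using the weak $L^2(Q_T)$-convergence provided by Proposition~\ref{prop:conv-weak-eps}; finally read off the estimate \eqref{eq:flux-dissipation-lim} from \eqref{eq:energy-entropy-lim}. The identity $\tilde J_\eps = F_\eps(\tilde u_\eps)\,\D_x^3\tilde u_\eps$ holds by the very definition of $\tilde J_\eps$ together with Definition~\ref{def:weak-sol-eps-R}~\eqref{item:integrability} applied to $\check u_\eps$ and preserved under the equidistribution~\eqref{eq:same_distribution}.

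First, I fix $\omega$ in the set of full $\tilde\P$-measure on which Proposition~\ref{prop:point-eps} gives $\tilde u_\eps(\omega)\to\tilde u(\omega)$ uniformly on $Q_T$ and $\tilde J_\eps(\omega)\rightharpoonup\tilde J(\omega)$ weakly in $L^2(Q_T)$. The set $\{\tilde u(\omega)>0\}$ is open in $Q_T$ by continuity of $\tilde u(\omega)$. For any $\varphi\in C_c^\infty(\{\tilde u(\omega)>0\})$ I choose $\delta>0$ such that $\tilde u(\omega)\ge 2\delta$ on $\supp\varphi$. By uniform convergence, there is $\eps_0=\eps_0(\omega,\varphi)>0$ such that for $\eps\in(0,\eps_0)$ we have $\tilde u_\eps(\omega)\ge\delta$ on $\supp\varphi$, whence $F_\eps(\tilde u_\eps(\omega))\ge\delta^{n/2}$ on $\supp\varphi$ and $F_\eps(\tilde u_\eps(\omega))\to\tilde u(\omega)^{n/2}$ uniformly there.

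Second, I solve for $\D_x^3\tilde u_\eps$ on $\supp\varphi$ and integrate by parts, extending the quotient by zero off $\supp\varphi$:
\[
-\int_{Q_T}(\D_x^3\varphi)\,\tilde u_\eps\,\diff x\,\diff t \;=\; \int_{Q_T}\varphi\,\D_x^3\tilde u_\eps\,\diff x\,\diff t \;=\; \int_{Q_T}\frac{\varphi}{F_\eps(\tilde u_\eps)}\,\tilde J_\eps\,\diff x\,\diff t.
\]
The left-hand side converges to $-\int_{Q_T}(\D_x^3\varphi)\,\tilde u\,\diff x\,\diff t$ by uniform convergence; the right-hand side converges to $\int_{Q_T}\varphi\,\tilde u^{-n/2}\,\tilde J\,\diff x\,\diff t$ since $\varphi/F_\eps(\tilde u_\eps)$ converges uniformly, hence in $L^2(Q_T)$, to $\varphi\,\tilde u^{-n/2}$, while $\tilde J_\eps\rightharpoonup\tilde J$ in $L^2(Q_T)$. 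Equating the limits identifies the distributional $\D_x^3\tilde u$ on $\{\tilde u>0\}$ with the $L^2_{\mathrm{loc}}$-function $\tilde u^{-n/2}\,\tilde J$, and yields $\tilde J=\tilde u^{n/2}\,\D_x^3\tilde u$ a.e.\ on $\{\tilde u>0\}$. Together with $|\{\tilde u=0\}|=0$ $\diff\tilde\P$-a.s.\ (Proposition~\ref{prop:conv-weak-eps}), this gives $\tilde J=\ind_{\{\tilde u>0\}}\tilde u^{n/2}\,\D_x^3\tilde u$ globally a.e.

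Finally, \eqref{eq:flux-dissipation-lim} follows from \eqref{eq:energy-entropy-lim}, since the left-hand side of \eqref{eq:flux-dissipation-lim} is $\tilde\E\|\tilde J\|_{L^2(Q_T)}^{(n+2)p}$ by the identification just established. The main obstacle is the non-triviality of the flux limit on $\{\tilde u>0\}$: this is overcome precisely by the uniform positivity of $F_\eps(\tilde u_\eps)$ on $\supp\varphi$ for $\eps$ small, which converts the mere weak convergence $\tilde J_\eps\rightharpoonup\tilde J$ into an identification of a pointwise-meaningful derivative $\D_x^3\tilde u$ on the degeneracy-free part of $Q_T$. The strengthened moment assumption $u^{(0)}\in L^{2np}$ enters only to ensure the requisite integrability needed to pass \eqref{eq:energy-entropy} through Fatou's lemma at the stronger exponent $(n+2)p$.
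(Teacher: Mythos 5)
Your proof is correct, and it takes a genuinely different and arguably more streamlined route than the paper's. The paper works at the level of expectations: it introduces the level sets $\left\{\tilde u>r\right\}$ together with the good-event indicator $\ind_{\{\norm{\tilde u_\eps-\tilde u}_{L^\infty}<r/2\}}$, uses the $\eps$-uniform bound on $F_\eps^2(\tilde u_\eps)(\D_x^3\tilde u_\eps)^2$ to extract a weak limit $\tilde\eta^r$ of $\D_x^3\tilde u_\eps$ in $L^2(\tilde\Omega\times Q_T)$, identifies $\tilde\eta^r = \D_x^3\tilde u$ on $\left\{\tilde u>r\right\}$ by testing against random $C^3$ test functions, and then decomposes $\tilde\E\int\tilde J_\eps\tilde\phi$ into $I_1(r,\eps)+I_2(r,\eps)$ and sends first $\eps\searrow0$ and then $r\searrow0$, controlling the remainder by $\tilde\E\abs{\{0<\tilde u\le r\}}\to 0$. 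Your proof instead works pointwise in $\omega$: you exploit that Proposition~\ref{prop:point-eps} gives the weak $L^2(Q_T)$-convergence $\tilde J_\eps(\omega)\rightharpoonup\tilde J(\omega)$ and the uniform convergence $\tilde u_\eps(\omega)\to\tilde u(\omega)$ for every $\omega$, so that for $\varphi\in C_c^\infty(\{\tilde u(\omega)>0\})$ the quotient $\varphi/F_\eps(\tilde u_\eps)$ converges to $\varphi\,\tilde u^{-n/2}$ uniformly (hence strongly in $L^2$) on $\supp\varphi$, and the strong-times-weak pairing does the rest. This bypasses both the auxiliary weak limit $\tilde\eta^r$ and the $r$-decomposition, replacing them with a single compact-support argument on the open set $\{\tilde u(\omega)>0\}$. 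The price is that you must invoke the pointwise (rather than merely in-expectation) weak convergence of $\tilde J_\eps$, which is available here; what you gain is a considerably shorter argument with no $r\searrow0$ bookkeeping. Two small remarks: (i) the phrase ``by the very definition of $\tilde J_\eps$'' should be replaced by the equidistribution argument the paper uses (namely $\tilde\E\abs{\langle\tilde J_\eps-F_\eps(\tilde u_\eps)\D_x^3\tilde u_\eps,\phi\rangle}=\check\E\abs{\langle\check J_\eps-F_\eps(\check u_\eps)\D_x^3\check u_\eps,\phi\rangle}=0$ for all smooth $\phi$), since $\tilde J_\eps$ is constructed only as a random variable sharing the law of $\check J_\eps$; (ii) your final comment on the role of the extra hypothesis $u^{(0)}\in L^{2np}$ is imprecise --- estimate~\eqref{eq:energy-entropy-lim} at exponent $(n+2)p$ is already established in Proposition~\ref{prop:conv-weak-eps} under $u^{(0)}\in L^{(n+2)p}$ and is all your proof of \eqref{eq:flux-dissipation-lim} uses --- but this does not affect the correctness of your argument.
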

\begin{proof}
The proof follows the lines of the proof of \cite[Proposition~5.6]{GessGnann2020}, with the additional complication of taking care of the approximation $F_\varepsilon(r)$ of the square root of the mobility $F(r)=\abs{r}^{\frac n 2}$.

Since the laws of $(\check{J}_{\eps}, \check{u}_\eps)$ and $(\tilde{J}_{\eps}, \tilde{u}_\eps)$ coincide (cf.~Proposition~\ref{prop:point-eps}), it holds for $\phi\in C^{\infty}(Q_{T})$, 
\[
0=\E\abs{\inner{\check{J}_{\eps}-F_{\eps}(\check{u}_\eps)\,\D_{x}^{3}\check{u}_\eps,\phi}_{L^{2}(Q_{T})}}=\tilde{\E}\abs{\inner{\tilde{J}_{\eps}-F_{\eps}\left(\tilde{u}_{\eps}\right)\D_{x}^{3}\tilde{u}_{\eps},\phi}_{L^{2}(Q_{T})}},
\]
which implies $\tilde{J}_{\eps}=F_{\eps}\left(\tilde{u}_{\eps}\right)\D_{x}^{3}\tilde{u}_{\eps}$.

Because of estimate~\eqref{eq:energy-entropy} of Proposition~\ref{prop:weak-eps}, $\tilde{J}_{\eps}=F_{\eps}(\tilde{u}_{\eps})\,\D_{x}^{3}\tilde{u}_{\eps}$, and \eqref{eq:def-f-eps}, it holds for fixed $r>0$, 
\begin{eqnarray}
\lefteqn{\tilde{\E}\int_{0}^{T}\int_{\bT}(\D_{x}^{3}\tilde{u}_{\eps})^{2}\,\ind_{\left\{ \norm{\tilde{u}_{\eps}-\tilde{u}}_{L^{\infty}(Q_{T})}<\frac{r}{2}\right\} \cap\left\{ \tilde{u}>r\right\} }\,\diff x\,\diff t}\nonumber \\
 & \le & \frac{2^n}{r^n}\tilde{\E}\int_{0}^{T}\int_{\left\{ \tilde{u}_{\eps}(t)>\frac{r}{2}\right\} }F_{\eps}^2(\tilde{u}_{\eps})\,(\D_{x}^{3}\tilde{u}_{\eps})^{2}\,\diff x\,\diff t\le C(r,u_{0}),\label{bound_dr3_u_eps}
\end{eqnarray}
where $C(r,u_{0})<\infty$ is independent of $\eps$. Hence, by taking a subsequence again denoted by $\tilde{u}_{\eps}$, it holds that for some $\eta^r \in L^{2}( \tilde\Omega,\tilde\cF,\tilde{\P};L^{2}(Q_{T}))$,
\begin{equation}
\D_{x}^{3}\tilde{u}_{\eps}\,\ind_{\left\{ \norm{\tilde{u}_{\eps}-\tilde{u}}_{L^{\infty}(Q_{T})}<\frac{r}{2}\right\} \cap\left\{ \tilde{u}>r\right\} }\rightharpoonup\tilde{\eta}^r \,\ind_{\left\{ \tilde{u}>r\right\} }\quad\mbox{as}\quad\eps\searrow0\quad\mbox{in}\quad L^{2}(\tilde\Omega,\tilde\cF,\tilde{\P};L^{2}(Q_{T})).\label{weak_d3xu}
\end{equation}

We next show that $\tilde{\eta}^r=\D_{x}^{3}\tilde{u}$ on $\left\{ \tilde{u}> r\right\}$ for any $r > 0$, that is, for almost every $(\omega,t) \in \Omega \times [0,T]$ and all $\tilde\varphi \in C_\mathrm{c}^\infty\left(\{\tilde u(\omega,t) > r\}\right)$ we have
\[
\int_\bT \tilde\eta^r \, \tilde\varphi \, \diff x = - \int_\bT \tilde u \, \D_x^3\tilde\varphi \, \diff x.
\]
It is enough to show that for almost every $\omega \in \Omega$ and all $\tilde\varphi \in C_\mathrm{c}^\infty\left(\{\tilde u(\omega) > r\}\right)$ we have
\[
\int_0^T \int_\bT \tilde\eta^r \, \tilde\varphi \, \diff x \, \diff t = - \int_0^T \int_\bT \tilde u \, \D_x^3\tilde\varphi \, \diff x \, \diff t.
\]
For every $\omega \in \Omega$ and $N \in \N$ let $\tilde\chi_N \in C^\infty(\R^2)$ such that $\tilde\chi_N(t,x) = 0$ for $(t,x) \in \{\tilde u(\omega) > r\}^\mathrm{c}$ and
\[
\tilde\chi_N(t,x) = \begin{cases} 1 & \mbox{ if } \mathrm{dist}\left((t,x),\partial\{\tilde u(\omega) > r\}\right) \ge \frac 1N \\
0  & \mbox{ if } \mathrm{dist}\left((t,x),\partial\{\tilde u(\omega) > r\}\right) \le \frac{1}{N+1} \end{cases}
\]
for all $(t,x) \in \{\tilde u(\omega) > r\}$. Then it is enough to show that for almost every $\omega \in \Omega$, all $\tilde\varphi \in C_\mathrm{c}^\infty\left(\R^2\right)$, and all $N \in \N$ it holds
\[
\int_0^T \int_\bT \tilde\eta^r  \, \tilde\varphi \, \tilde\chi_N \, \diff x \, \diff t = - \int_0^T \int_\bT \tilde u \, \D_x^3 (\tilde\varphi \, \tilde\chi_N) \, \diff x \, \diff t.
\]
Observe that for every $\varphi\in C^\infty_c(\{\tilde u(\omega)>r\})$, we have $\mathrm{dist}(\mathrm{supp}\ \varphi, \partial\{\tilde u(\omega)>r\})>0.$ Hence, for $N$ sufficiently large, $\tilde\chi_N(t,x)\cdot\tilde\varphi=\tilde\varphi $ for $\tilde \varphi\in C^\infty_c(\{\tilde u(\omega)>r\}).$
The equality above in particular holds if for all $\tilde\varphi \in C_\mathrm{c}^\infty\left(\R^2\right)$, $N \in \N$, and $\tilde\theta \in L^\infty (\tilde \Omega,\tilde\F, \tilde\P)$ we have
\[
\tilde\E \int_0^T \int_\bT \tilde\eta^r \, \tilde\varphi \, \tilde\chi_N \, \tilde\theta \, \diff x \, \diff t = - \tilde\E \int_0^T \int_\bT \tilde u \, \D_x^3(\tilde\varphi \, \tilde\chi_N \, \tilde\theta) \, \diff x \, \diff t.
\]
Since $\tilde\varphi \, \tilde\chi_N \, \tilde\theta \in L^\infty(\tilde \Omega,\tilde\F, \tilde\P;C^3(Q_T))$, it suffices to prove that for $\tilde{\zeta}\in L^{\infty}(\tilde\Omega,\tilde\cF,\tilde{\P};C^{3}(Q_{T}))$ such that $\supp_{(t,x)\in Q_{T}}\tilde{\zeta}\Subset\left\{ \tilde{u}>r\right\}$, $\diff\tilde{\P}$-almost surely, we have
\begin{equs}            \label{eq:testing}
\tilde\E \int_0^T \int_\bT \tilde\eta^r \, \tilde\zeta \, \diff x \, \diff t = - \tilde\E \int_0^T \int_\bT \tilde u \, \D_x^3\tilde\zeta \, \diff x \, \diff t.
\end{equs}
Therefore, take $\tilde{\zeta} \in L^{\infty}(\tilde\Omega,\tilde\cF,\tilde{\P};C^{3}(Q_{T}))$ such that $\supp_{(t,x)\in Q_{T}}\tilde{\zeta}\Subset\left\{ \tilde{u}>r\right\}$, $\diff\tilde{\P}$-almost surely. Then, integration by parts gives 
\begin{equs}
\lefteqn{\tilde{\E}\int_{0}^{T}\int_{\bT}(\D_{x}^{3}\tilde{u}_{\eps})\,\ind_{\left\{ \norm{\tilde{u}_{\eps}-\tilde{u}}_{L^{\infty}(Q_{T})}<\frac{r}{2}\right\} \cap\left\{ \tilde{u}>r\right\} }\,\tilde{\zeta}\,\diff x\,\diff t}\\
 & = & \tilde{\E}\left[\ind_{\left\{ \norm{\tilde{u}_{\eps}-\tilde{u}}_{L^{\infty}(Q_{T})}<\frac{r}{2}\right\} }\int_{0}^{T}\int_{\bT}(\D_{x}^{3}\tilde{u}_{\eps}) \, \tilde{\zeta} \,\diff x\,\diff t\right]\\
 & = & -\tilde{\E}\int_{0}^{T}\int_{\bT}\tilde{u}_{\eps} \, \ind_{\left\{ \norm{\tilde{u}_{\eps}-\tilde{u}}_{L^{\infty}(Q_{T})}<\frac{r}{2}\right\}} \, \D_{x}^{3}\tilde{\zeta} \,\diff x\,\diff t\\
 & \to & -\tilde{\E}\int_{0}^{T}\int_{\bT}\tilde{u} \, \D_{x}^{3}\tilde{\zeta} \,\diff x\,\diff t\quad\mbox{as}\quad\eps\searrow0
 \label{eq:uD3zeta}
\end{equs}
for any $r>0$, where in the last line we have applied Vitali's convergence theorem. Indeed, by Proposition~\ref{prop:point-eps} it holds 
\[
\tilde{u}_{\eps} \, \ind_{\left\{ \norm{\tilde{u}_{\eps}-\tilde{u}}_{L^{\infty}(Q_{T})}<\frac{r}{2}\right\}} \,(\D_{x}^{3}\tilde{\zeta})\to\tilde{u}\, \D_{x}^{3}\tilde{\zeta} \quad \mbox{as}\quad\eps\searrow0,
\]
$\diff\tilde{\P}\otimes\diff t\otimes\diff x$-almost everywhere, and  for some $p'>1$ we have 
\begin{eqnarray*}
\lefteqn{\tilde{\E}\int_{0}^{T}\int_{\bT}\abs{\tilde{u}_{\eps} \, \ind_{\left\{ \norm{\tilde{u}_{\eps}-\tilde{u}}_{L^{\infty}(Q_{T})}<\frac{r}{2}\right\}} \, (\D_{x}^{3}\tilde{\zeta})}^{p'}\diff x\,\diff t}\\
 & \le & \|\D_{x}^{3}\tilde{\zeta}\|_{{L^{\infty}\left(\tilde\Omega,\tilde\cF,\tilde{\P};L^\infty(Q_{T})\right)}}^{p'} \left(\tilde{\E}\int_{0}^{T}\int_{\bT}(\tilde{u}_{\eps})^{p'} \, \ind_{\left\{ \norm{\tilde{u}_{\eps}-\tilde{u}}_{L^{\infty}(Q_{T})}<\frac{r}{2}\right\}} \, \diff x \, \diff t\right)\\
 & \le & C(u_0),
\end{eqnarray*}
for some constant $C(u_0)$ independent of $\varepsilon,r$, where we have used the $\eps$-uniform bound \eqref{eq:energy-entropy} of Proposition~\ref{prop:weak-eps}. Therefore, by \eqref{weak_d3xu} and \eqref{eq:uD3zeta}, we get \eqref{eq:testing}, which in turn implies that 
\begin{equs}    \label{eq:eta=D3u}
\tilde{\eta}^r=\D_{x}^{3}\tilde{u} 
\end{equs}
on $\left\{ \tilde{u}> r\right\}$ for every $r > 0$.

Now, take $\tilde{\phi}\in L^{\infty}(\tilde\Omega,\tilde\cF,\tilde{\P};L^{\infty}(Q_{T}))$, $r>0$, $\eps\in(0,1]$, and separate according to 
\begin{equation}
I(\eps):=\tilde{\E}\int_{0}^{T}\int_{\bT}\tilde{J}_{\eps}\,\tilde{\phi}\,\diff x\,\diff t=I_{1}(r,\eps)+I_{2}(r,\eps),\label{decomp_i1_i2}
\end{equation}
where because of $\tilde{u}\ge0$, $\diff\tilde{\P}$-almost surely (cf.~Proposition~\ref{prop:conv-weak-eps}), we may choose 
\begin{align*}
I_{1}(r,\eps) & :=\tilde{\E}\int_{0}^{T}\int_{\bT}\tilde{J}_{\eps}\,\ind_{\left\{ \norm{\tilde{u}_{\eps}-\tilde{u}}_{L^{\infty}(Q_{T})}<\frac{r}{2}\right\} \cap\left\{ \tilde{u}>r\right\} }\,\tilde{\phi}\,\diff x\,\diff t,\\
I_{2}(r,\eps) & :=\tilde{\E}\int_{0}^{T}\int_{\bT}\tilde{J}_{\eps}\,\ind_{\left\{ \norm{\tilde{u}_{\eps}-\tilde{u}}_{L^{\infty}(Q_{T})}\ge\frac{r}{2}\right\} \cup\left\{ 0<\tilde{u}\le r\right\} }\,\tilde{\phi}\,\diff x\,\diff t.
\end{align*}
Then, we separate according to 
\begin{equation}
I_{1}(r,\eps)-\tilde{\E}\int_{0}^{T}\int_{\bT}\tilde{u}^{\frac n 2}\,(\D_{x}^{3}\tilde{u})\,\ind_{\left\{ \tilde{u}>r\right\} }\,\diff x\,\diff t=I_{11}(r,\eps)+I_{12}(r,\eps),\label{separate_i1}
\end{equation}
where 
\begin{align*}
I_{11}(r,\eps) & :=\tilde{\E}\int_{0}^{T}\int_{\bT}\left(F_{\eps}\left(\tilde{u}_{\eps}\right)-\tilde{u}^{\frac n 2}\right)(\D_{x}^{3}\tilde{u}_{\eps})\,\ind_{\left\{ \norm{\tilde{u}_{\eps}-\tilde{u}}_{L^{\infty}(Q_{T})}<\frac{r}{2}\right\} \cap\left\{ \tilde{u}>r\right\} }\,\tilde{\phi}\,\diff x\,\diff t,\\
I_{12}(r,\eps) & :=\tilde{\E}\int_{0}^{T}\int_{\bT}\left(\D_{x}^{3}\tilde{u}_{\eps}-\D_{x}^{3}\tilde{u}\right)\ind_{\left\{ \norm{\tilde{u}_{\eps}-\tilde{u}}_{L^{\infty}(Q_{T})}<\frac{r}{2}\right\} \cap\left\{ \tilde{u}>r\right\} }\,\tilde{u}^{\frac n 2}\,\tilde{\phi}\,\diff x\,\diff t,
\end{align*}
where we have used $\tilde{J}_{\eps}=F_{\eps}\left(\tilde{u}_{\eps}\right)\D_{x}^{3}\tilde{u}_{\eps}$. For the first integral, we note that 
\begin{equation}\label{lim_i11}\begin{split}
I_{11}(r,\eps) & \le\left(\tilde{\E}\int_{0}^{T}\int_{\bT}\left(\frac{F_{\eps}\left(\tilde{u}_{\eps}\right)-\tilde{u}^{\frac n 2}}{F_{\eps}\left(\tilde{u}_{\eps}\right)}\right)^{2}\ind_{\left\{ \norm{\tilde{u}_{\eps}-\tilde{u}}_{L^{\infty}(Q_{T})}<\frac{r}{2}\right\} \cap\left\{ \tilde{u}>r\right\} }\,\diff x\,\diff t\right)^{\frac{1}{2}}\\
 & \times\left(\tilde{\E}\int_{0}^{T}\int_{\bT}F_{\eps}^{2}\left(\tilde{u}_{\eps}\right)(\D_{x}^{3}\tilde{u}_{\eps})^{2}\,\diff x\,\diff t\right)\norm{\tilde{\phi}}_{L^{\infty}\left(\tilde\Omega,\tilde\cF,\tilde{\P};L^{\infty}(Q_{T})\right)}\\
 &\stackrel{\eqref{eq:def-f-eps}, \eqref{eq:energy-entropy}}{\le} \frac{C}{F_{\eps}\left(\frac{r}{2}\right)}\left(\tilde{\E}\int_{0}^{T}\int_{\bT}\left(F_{\eps}\left(\tilde{u}_{\eps}\right)-\tilde{u}^{\frac n 2}\right)^{2}\,\diff x\,\diff t\right)^{\frac{1}{2}}\norm{\tilde{\phi}}_{L^{\infty}\left(\tilde\Omega,\tilde\cF,\tilde{\P};L^{\infty}(Q_{T})\right)},\\
 & \to0\quad\mbox{as}\quad\eps\searrow0,
\end{split}\end{equation}
where $C<\infty$ is independent of $r$, $\eps$, and $\tilde{\phi}$, and we used Vitali's convergence theorem and Proposition~\ref{prop:weak-eps}.

Because of 
\begin{eqnarray*}
\tilde{\E}\int_{0}^{T}\int_{\bT}\left(\tilde{u}^{\frac n 2}\,\tilde{\phi}\right)^{2}\diff x\,\diff t & \le & C \left(1 + \tilde{\E}\int_{0}^{T}\int_{\bT}\tilde{u}^{n+2} \, \diff x\,\diff t\right) \norm{\tilde{\phi}}_{L^{\infty}\left(\tilde\Omega,\tilde\cF,\tilde{\P};L^{\infty}(Q_{T})\right)}^{2} \\
&\stackrel{\eqref{eq:energy-entropy-lim}}{\le}& C(u^{(0)}) \norm{\tilde{\phi}}_{L^{\infty}\left(\tilde\Omega,\tilde\cF,\tilde{\P};L^{\infty}(Q_{T})\right)}^{2} < \infty,
\end{eqnarray*}
where the Sobolev embedding, mass conservation (Remark~\ref{rem:conservation-of-mass}), and Proposition~\ref{prop:conv-weak-eps} have been applied, we have $\tilde{u}^{\frac n 2} \, \tilde{\phi}\in L^{2}\left(\tilde\Omega,\tilde\cF,\tilde{\P};L^{2}(Q_{T})\right)$ and by the weak convergence stated in \eqref{weak_d3xu}, combined with \eqref{eq:eta=D3u},  it follows that  $I_{12}(r,\eps)\to0$ as $\eps\searrow0$, which in conjunction with \eqref{separate_i1} and \eqref{lim_i11} gives 
\begin{equation}
I_{1}(r,\eps)\to\tilde{\E}\int_{0}^{T}\int_{\bT}\tilde{u}^{\frac n 2} \, (\D_{x}^{3}\tilde{u})\,\ind_{\left\{ \tilde{u}>r\right\} }\,\tilde{\phi}\,\diff x\,\diff t\quad\mbox{as}\quad\eps\searrow0.\label{lim_i1}
\end{equation}

The integral $I_{2}(r,\eps)$ in \eqref{decomp_i1_i2} can be estimated as 
\begin{eqnarray*}
\abs{I_{2}(r,\eps)} & \le & C\left(\tilde{\E}\int_{0}^{T}\int_{\bT}F_{\eps}^{2}(\tilde{u}_{\eps})\left(\D_{x}^{3}\tilde{u}_{\eps}\right)^{2}\diff x\,\diff t\right)\norm{\tilde{\phi}}_{L^{\infty}\left(\tilde\Omega,\tilde\cF,\tilde{\P};L^{\infty}(Q_{T})\right)}\\
 &  & \times\left(\tilde{\E}\int_{0}^{T}\int_{\bT}
 \ind_{\left\{ \norm{\tilde{u}_{\eps}-\tilde{u}}_{L^{\infty}(Q_{T})}\ge\frac{r}{2}\right\} \cup\left\{ 0<\tilde{u}\le r\right\} }\,\diff x\,\diff t\right)^{\frac{1}{2}}\\
 & \stackrel{\eqref{eq:energy-entropy}}{\le} & C(u^{(0)}) \, \norm{\tilde{\phi}}_{L^{\infty}\left(\tilde\Omega,\tilde\cF,\tilde{\P};L^{\infty}(Q_{T})\right)}\\
 &  & \times\left(\tilde{\E}\int_{0}^{T}\int_{\bT}
 \ind_{\left\{ \norm{\tilde{u}_{\eps}-\tilde{u}}_{L^{\infty}(Q_{T})}\ge\frac{r}{2}\right\} \cup\left\{ 0<\tilde{u}\le r\right\} } \,\diff x\,\diff t\right)^{\frac{1}{2}}
\end{eqnarray*}
where $C, C(u^{(0)}) < \infty$ are independent of $r$, $\eps$, and $\tilde{\phi}$, and Proposition~\ref{prop:weak-eps}  has been used. Then, we note that 
\[
\ind_{\left\{ \norm{\tilde{u}_{\eps}-\tilde{u}}_{L^{\infty}(Q_{T})}\ge\frac{r}{2}\right\} \cup\left\{ 0<\tilde{u}\le r\right\} } \to
\ind_{\left\{ 0<\tilde{u}\le r\right\} }\quad\mbox{as}\quad\eps\searrow0,
\]
$\diff\tilde{\P}\otimes\diff t\otimes\diff x$-almost everywhere due to Proposition~\ref{prop:point-eps}. Therefore, by bounded convergence it follows
\begin{align*}
\limsup_{\eps\searrow0}I_{2}(r,\eps) & \le C\,\norm{\tilde{\phi}}_{L^{\infty}\left(\tilde\Omega,\tilde\cF,\tilde{\P};L^{\infty}(Q_{T})\right)}\times\left(\tilde{\E} \abs{\left\{0 < \tilde u \le r\right\}} \right)^{\frac{1}{2}},
\end{align*}
which in combination with \eqref{decomp_i1_i2} and \eqref{lim_i1} leads to 
\begin{align*}
& \limsup_{\eps\searrow0}\left|I(\eps)-\tilde{\E}\int_{0}^{T}\int_{\bT}\tilde{u}^{n}\,(\D_{x}^{3}\tilde{u})\,\ind_{\left\{ \tilde{u}>r\right\} }\,\tilde{\phi}\,\diff x\,\diff t\right| \nonumber \\
& \quad \le C \left(\tilde{\E} \abs{\left\{0 < \tilde u \le r\right\}}\right)^{\frac 1 2} \norm{\tilde{\phi}}_{L^{\infty}\left(\tilde\Omega,\tilde\cF,\tilde{\P};L^{\infty}(Q_{T})\right)},
\end{align*}
where $C < \infty$ is independent of $r$. In the limit $r \searrow 0$, we infer by monotone convergence $\limsup_{r \searrow 0} \tilde{\E} \abs{\left\{0 < \tilde u \le r\right\}} = 0$, which finishes the proof. 
\end{proof}
%

%%%%%%%%%%%%%%%%%%%%%
%%%%%%%%%%%%%%%%%%%%%
\subsection{Recovering the SPDE}\label{sec:proof_main_result}
%%%%%%%%%%%%%%%%%%%%%
%%%%%%%%%%%%%%%%%%%%%
In this section, we give the proof of the main result Theorem \ref{th:existence}:
\begin{proof}[Proof of Theorem~2.2]
We first note that the results of \S\ref{sec:compactness} can be applied due to the assumptions of Theorem~\ref{th:existence}, with $p$ in \S\ref{sec:compactness} replaced by $\frac{p}{n+2}$ from the statement of Theorem~\ref{th:existence}.

We will show that $\{ (\tilde \Omega,\tilde \cF,\tilde\F,\tilde \P), \ (\tilde\beta_k)_{k \in \Z},\ \tilde u(0),  \tilde u \}$ is a solution of \eqref{eq:exact-equation}. The fact that $\tilde{u}(0)$ has the same distribution as $u^{(0)}$ follows from Proposition~\ref{prop:point-eps} and \eqref{eq:same_distribution} therein. By Proposition~\ref{prop:conv-weak-eps}, $\tilde{u}$ is an $\tilde\F$-adapted continuous $H^1_\mathrm{w}(\bT)$-valued process, so that in particular $\tilde u(0)$ is $\tilde{\cF}_0$-measurable. The fact that the $\tilde{\beta}^k$ are independent real-valued standard $\tilde{\F}$-Wiener processes is the content of Lemma~\ref{lem:bm}. The H\"older regularity stated in \eqref{hoelder-main} is a consequence of \eqref{hoelder-point} of Proposition~\ref{prop:point-eps}. Moreover, \eqref{it:exact-solution-1} and \eqref{it:exact-solution-2} from Definition \ref{def:exact-solution} follow from \eqref{eq:energy-entropy-lim} and \eqref{eq:flux-dissipation-lim}. Hence, we only have to show \eqref{it:exact-solution-3}.

Denote by $\tilde u_\eps$ the sequence of Proposition \ref{prop:point-eps} and notice that by \eqref{eq:same_distribution} we have that $\tilde{u}_\eps$ satisfies \eqref{eq:approx-equation-2},  i.e., for $\varphi \in C^\infty(\bT)$ we have, $\diff\tilde\P$-almost surely
\begin{eqnarray}
\inner{\tilde u_\eps(t), \varphi}_{L^2(\bT)} &=& \inner{\tilde{u}_\eps(0), \varphi}_{L^2(\bT)} + \int_0^t \inner{F^2_\eps\left(\tilde u_\eps(t')\right) \D^3_x \tilde u_\eps(t'), \D_x\varphi}_{L^2(\bT)} \diff t' \nonumber  \\
&& - \frac{1}{2} \sumkz\int_0^t \inner{\sigma_k F_\eps'\left(\tilde u_\eps(t')\right) \D_x \left( \sigma_k F_\eps \left(\tilde u_\eps(t')\right) \right), \D_x \varphi}_{L^2(\bT)} \diff t' \nonumber \\
&& - \sumkz\int_0^t \inner{\sigma_k F_\eps \left(\tilde u_\eps(t')\right), \D_x\varphi}_{L^2(\bT)} \diff \tilde\beta^k_\eps(t'),
\label{eq:approx-equation-2-test}
\end{eqnarray}
for all $t \in [0,T]$.
We claim that for all  $t \in [0,T]$,

\begin{subequations}\label{lim-eps-0}
\begin{align}
& \inner{\tilde u_\eps(t), \varphi}_{L^2(\bT)} \to \inner{\tilde u(t), \varphi}_{L^2(\bT)},  \label{lim-eps-0-u} 
\end{align}
\begin{align}
& \int_0^t \intort F^2_\eps\left(\tilde u_\eps(t')\right) \left(\D^3_x \tilde u_\eps(t')\right) \D_x\varphi \, \diff x \, \diff t' \nonumber \\
& \quad \to \int_0^t \int_{\left\{\tilde u(t')>0\right\}} \left(\tilde u(t')\right)^n \left(\D^3_x \tilde u(t')\right) \D_x\varphi \, \diff x \, \diff t', \label{lim-eps-0-j}
\end{align}
and
\begin{align}
& \frac{1}{2} \sum_{k \in \Z} \int_0^t \inner{\sigma_k F_\eps'\left(\tilde u_\eps(t')\right) \D_x \left( \sigma_k F_\eps \left(\tilde u_\eps(t')\right) \right), \D_x \varphi}_{L^2(\bT)} \diff t' \nonumber \\
& \quad \to \frac{1}{2} \sum_{k \in \Z} \int_0^t \inner{\sigma_k F_0'\left(\tilde u(t')\right) \D_x \left( \sigma_k F_0\left(\tilde u(t')\right)\right), \D_x \varphi}_{L^2(\bT)} \diff t', \label{lim-eps-0-drift} 
\end{align}
\end{subequations}
as $\eps \to 0$, $\diff\tilde \P$-almost surely.

\paragraph*{Argument for \eqref{lim-eps-0-u}}
Since by Proposition~\ref{prop:point-eps} it holds $\norm{\tilde u_\eps - \tilde u}_{C(Q_T)} \to 0$ as $\eps \searrow 0$, $\diff\tilde\P$-almost surely, it follows
\[
\sup_{t \leq T} \abs{\inner{\tilde u_\eps(t), \varphi}_{L^2(\bT)} - \inner{\tilde u(t), \varphi}_{L^2(\bT)}} \le \norm{\tilde u_\eps - \tilde u}_{C(Q_T)} \norm{\varphi}_{L^1(\bT)} \to 0 \quad \mbox{as} \quad \eps \searrow 0,
\]
$\diff\tilde\P$-almost surely.

\paragraph*{Argument for \eqref{lim-eps-0-j}}
By Proposition~\ref{prop:identify-flux}, we can identify
\[
\int_0^t \intort F^2_\eps\left(\tilde u_\eps(t')\right) \left(\D^3_x \tilde u_\eps(t')\right) \D_x\varphi \, \diff x \, \diff t' = \int_0^t \int_{\bT} F_\eps\left(\tilde u_\eps(t')\right) \tilde J_\eps(t') \, \D_x\varphi \, \diff x \, \diff t'
\]
and
\[
\int_0^t \int_{\left\{\tilde u(t')>0\right\}} \left(\tilde u(t')\right)^n \left(\D^3_x \tilde u(t')\right) \D_x\varphi \, \diff x \, \diff t' = \int_0^t \int_{\bT} \left(\tilde u(t')\right)^{\frac n 2} \tilde J(t') \, \D_x\varphi \, \diff x \, \diff t',
\]
so that the limit in \eqref{lim-eps-0-j} follows from \eqref{eq:def-f-eps}, $\tilde u_\eps \to \tilde u$ in $C(Q_T)$, and $\tilde J_\eps \rightharpoonup \tilde J$ in $L^2(Q_T)$, $\diff\tilde\P$-almost surely.

\paragraph*{Argument for \eqref{lim-eps-0-drift}}
Applying the chain rule and integration by parts yields
\begin{equation*}\begin{split}
&\sum_{k \in \Z} \int_0^t \inner{\sigma_k F_\eps'\left(\tilde u_\eps(t')\right) \D_x \left( \sigma_k F_\eps \left(\tilde u_\eps(t')\right) \right), \D_x \varphi}_{L^2(\bT)} \diff t' \nonumber \\
&=-\sum_{k \in \Z} \int_0^t \inner{\int_1^{\tilde u_\eps(t')}(F'_\eps)^2(r) \, \diff r, \D_x(\sigma_k^2 \D_x \varphi)}_{L^2(\bT)} \diff t' \nonumber \\
& +\lefteqn{\frac{1}{2}\sum_{k \in \Z} \int_0^t \inner{  F_\eps'\left(\tilde u_\eps(t')\right)   F_\eps \left(\tilde u_\eps(t')\right) ,  (\D_x\sigma_k^2)\D_x \varphi}_{L^2(\bT)} \diff t'.} 
\end{split}\end{equation*}
Since by Proposition~\ref{prop:point-eps} we have $\norm{\tilde u_\eps - \tilde u}_{C(Q_T)} \to 0$ as $\eps \searrow 0$, $\diff\tilde\P$-almost surely, and by reversing the application of the chain rule and integration by parts, we obtain
\begin{equation*}\begin{split}
&\sum_{k \in \Z} \int_0^t \inner{\sigma_k F_\eps'\left(\tilde u_\eps(t')\right) \D_x \left( \sigma_k F_\eps \left(\tilde u_\eps(t')\right) \right), \D_x \varphi}_{L^2(\bT)} \diff t' \nonumber \\
&\to {-\sum_{k \in \Z} \int_0^t \inner{\int_1^{\tilde u(t')}(F'_0)^2(r) \, \diff r, \D_x(\sigma_k^2 \D_x \varphi)}_{L^2(\bT)} \diff t'} \nonumber \\
& +{\frac{1}{2}\sum_{k \in \Z} \int_0^t \inner{  F_0'\left(\tilde u(t')\right)   F_0 \left(\tilde u(t')\right) ,  (\D_x\sigma_k^2)\D_x \varphi}_{L^2(\bT)} \diff t'} \nonumber \\
&={\sum_{k \in \Z} \int_0^t \inner{\sigma_k F_0'\left(\tilde u(t')\right) \D_x \left( \sigma_k F_0 \left(\tilde u(t')\right) \right), \D_x \varphi}_{L^2(\bT)} \diff t'} \nonumber,
\end{split}\end{equation*}
uniformly in $t \in [0,T]$, $\diff\tilde\P$-almost surely. We note that the application of the chain rule is justified due to the $\diff\tilde\P$-almost surely boundedness of $u_\eps$ and $\tilde u$ and the local Lipschitz continuity of the occurring nonlinearities. 

Hence, we have showed  \eqref{lim-eps-0}. It  follows that for all  $t \in [0,T]$, 
\begin{equs}       \label{eq:def_Meps}
M_{\eps,\varphi}(t):= - \sum_{k \in \Z} \int_0^t \inner{\sigma_k F_\eps \left(\tilde u_\eps(t')\right), \D_x\varphi}_{L^2(\bT)} \diff \beta^k_\eps(t')  \to M_
\varphi(t) 
\end{equs}
as $\eps \searrow 0 $, $\diff\tilde{\P}$-almost surely,  for some  process $M_\varphi(t)$. Since the limits in \eqref{lim-eps-0} are continuous processes, so is $M_\varphi$. By virtue of Proposition~\ref{prop:weak-eps}, we can choose $\kappa \in (1,2)$ such that  for any $ \eps \in (0,1)$
 \begin{eqnarray*}
\lefteqn{\tilde{\E} \sup_{t \in [0,T]} | M_{\eps, \varphi}(t)|^{2 \kappa}} \\
&\le& C \  \lefteqn{\tilde\E \left(\sum_{k \in \Z} \int_0^T \inner{\sigma_k F_\eps \left(\tilde u_\eps(t)\right), \D_x\varphi}_{L^2(\bT)}^2\diff t \right )^\kappa } \\
&\le& C \, T^\kappa \left(\sum_{k \in \Z} \norm{\sigma_k}_{L^2(\bT)}^2\right)^\kappa \tilde\E \sup_{t \in [0,T]} \norm{F_\eps \left(\tilde u(t)\right)}_{L^\infty(\bT)}^{2\kappa} \norm{\D_x\varphi}_{L^2(\bT)}^{2\kappa} \\
&\stackrel{\eqref{eq:reg-sigma-k}, \eqref{eq:def-f-eps}, \eqref{eq:energy-entropy}}{\le}& C \left(1 + \tilde\E \abs{\Atunull}^{\kappa n} + \tilde\E \sup_{t \in [0,T]} \norm{\D_x \tilde u(t)}_{L^2(\bT)}^{\kappa n}\right) \stackrel{\eqref{eq:energy-entropy}}{\le} C\left( u^{(0)}\right),
\end{eqnarray*}
where $C\left( u^{(0)}\right) < \infty$ is independent of $\eps$ and where Remark~\ref{rem:conservation-of-mass} (mass conservation), the Sobolev embedding theorem, and Poincar\'e's inequality have been applied. In particular, we have 
\begin{equation}      \label{eq:uniform-integrability}
\sup_{\eps \in (0,1)} \tilde{\E} \sup_{t \in [0,T]} | M_{\eps, \varphi}(t)|^{2 \kappa} + \sup_{\eps \in (0,1)} \tilde\E \left(\sum_{k \in \Z} \int_0^T \inner{\sigma_k F_\eps \left(\tilde u_\eps(t)\right), \D_x\varphi}_{L^2(\bT)}^2\diff t \right )^\kappa < \infty,
\end{equation}
which implies by Fatou's lemma that 
\begin{equs}
\tilde{\E}|M_\varphi(t)|^{2\kappa} < \infty.
\end{equs} 
In order to complete the proof, we only have to show that  
\begin{equation}\label{mart-m-phi}
M_\varphi(t) = - \sum_{k \in \Z} \int_0^t \inner{\sigma_k F_0 \left(\tilde u(t')\right), \D_x\varphi}_{L^2(\bT)} \diff \beta^k(t').
\end{equation}
For this,  it suffices by virtue of  \cite[Proposition~A.1]{HOF2} to verify that for  $0 \le t' \le t \le T$ and $k \in \mathbb{Z}$,  we have 
\begin{align}
\tilde\E\left[ M_\varphi(t) -  M_\varphi(t') \middle| \tilde\cF_{t'}\right] &= 0, \label{rep-mart-eps-0-1} \\
\tilde\E\left[\left( M_\varphi(t)\right)^2 - \left( M_\varphi(t')\right)^2 - \sum_{k \in \Z} \int_{t'}^t \inner{\sigma_k F_0 \left(\tilde u(t'',\cdot)\right), \D_x\varphi}_{L^2(\bT)}^2 \diff t'' \middle| \tilde\cF_{t'}\right] &= 0, \label{rep-mart-eps-0-2} \\
\tilde\E\left[\tilde\beta^k(t) \,  M_\varphi(t) - \tilde\beta^k(t') \, M_\varphi(t') +  \int_{t'}^t \inner{\sigma_k F_0 \left(\tilde u(t'',\cdot)\right), \D_x\varphi}_{L^2(\bT)} \diff t'' \middle| \tilde\cF_{t'}\right] &= 0. \label{rep-mart-eps-0-3}
\end{align}
Notice  that  $M_{\eps,\varphi}$ as defined in \eqref{eq:def_Meps} is a square-integrable $\tilde\cF_{\eps,t}'$-martingale, where $\tilde\cF_{\eps,t}' := \sigma\left(\tilde u_\eps(t'), \tilde W_\eps(t') \colon 0 \le t' \le t\right)$. Hence, we infer that for $0 \le t' \le t \le T$ and any
\[
\Phi \in C\left(C(Q_{t'}) \times C\left([0,t'];H^2(\bT)\right);[0,1]\right)
\]
it holds
\begin{subequations}\label{rep-mart-eps}
\begin{align*}
\tilde\E\left[\left(\tilde M_{\eps,\varphi}(t) - \tilde M_{\eps,\varphi}(t')\right) \tilde\Phi_\eps(t')\right] &= 0, \\
\tilde\E\left[\left(\left(\tilde M_{\eps,\varphi}(t)\right)^2 - \left(\tilde M_{\eps,\varphi}(t')\right)^2 - \sum_{k \in \Z} \int_{t'}^t \inner{\sigma_k F_\eps \left(\tilde u_\eps(t'',\cdot)\right), \D_x\varphi}_{L^2(\bT)}^2 \diff t''\right) \tilde\Phi_\eps(t')\right] &= 0,  \\
\tilde\E\left[\left(\tilde\beta^k_\eps(t) \, \tilde M_{\eps,\varphi}(t) - \tilde\beta^k_\eps(t') \, \tilde M_{\eps,\varphi}(t') +  \int_{t'}^t \inner{\sigma_k F_\eps \left(\tilde u_\eps(t'',\cdot)\right), \D_x\varphi}_{L^2(\bT)} \diff t''\right) \tilde\Phi_\eps(t')\right] &= 0, 
\end{align*}
\end{subequations}
where
\[
\tilde\Phi_\eps(t') := \Phi\left( \tilde u_\eps |_{[0,t'] },  \tilde W_\eps |_{[0,t'] }\right).
\]
By the convergence stated in \eqref{conv-u-eps-0} and \eqref{conv-w-eps-0} of Proposition~\ref{prop:point-eps} and \eqref{eq:def_Meps},  combined with the uniform integrability of all the terms  appearing in the expectations above,  which in turn follows from \eqref{eq:uniform-integrability}, we conclude that 

\begin{subequations}
\begin{align*}
\tilde\E\left[\left(M_{\varphi}(t) - M_{\varphi}(t')\right) \tilde\Phi(t')\right] &= 0,  \\
\tilde\E\left[\left(\left( M_{\varphi}(t)\right)^2 - \left( M_{\varphi}(t')\right)^2 - \sum_{k \in \Z} \int_{t'}^t \inner{\sigma_k F_0 \left( u(t'',\cdot)\right), \D_x\varphi}_{L^2(\bT)}^2 \diff t''\right) \tilde\Phi(t')\right] &= 0,  \\
\tilde\E\left[\left(\tilde \beta^k(t) \,  M_{\varphi}(t) - \tilde\beta^k(t') \,  M_{\varphi}(t') +  \int_{t'}^t \inner{\sigma_k F_0 \left(\tilde u(t'',\cdot)\right), \D_x\varphi}_{L^2(\bT)} \diff t''\right) \tilde\Phi(t')\right] &= 0, 
\end{align*}
\end{subequations}
where
\[
\tilde\Phi(t') := \Phi\left( \tilde u|_{[0,t'] },  \tilde W |_{[0,t'] }\right).
\]
Since $\Phi$ was arbitrary, we conclude  that \eqref{rep-mart-eps-0-1}, \eqref{rep-mart-eps-0-2}, and \eqref{rep-mart-eps-0-3} are valid with $\tilde \cF_{t'}$ replaced by $\tilde \cF_{t'}'$.   The passage from $\tilde \cF_{t'}'$ to $\tilde \cF_{t'}$ follows by a standard continuity argument employing Vitali's convergence theorem. This finishes the proof.
\end{proof}

\section*{Acknowledgements}
\renewcommand{\leftmark}{\textsc{Acknowledgements}}
\renewcommand{\rightmark}{\textsc{Acknowledgements}}
\addcontentsline{toc}{section}{Acknowledgements}
 
KD is grateful towards  Bielefeld University and TU Delft for their hospitality during the preparation of this manuscript. BG acknowledges support by the Max Planck Society through the Max Planck Research Group \emph{Stochastic partial differential equations} and by the Deutsche Forschungsgemeinschaft (DFG, German Research Foundation) through project B8 of the CRC 1283 \emph{Taming uncertainty and profiting from randomness and low regularity in analysis, stochastics and their applications}. MVG wishes to thank Bielefeld University and the Max Planck Institute for Mathematics in the Sciences in Leipzig for their kind hospitality. GG acknowledges support by Deutsche Forschungsgemeinschaft (DFG, German Research Foundation) through project \#397495103 entitled \emph{Free boundary propagation and noise: analysis and numerics of stochastic degenerate parabolic equations}.

\renewcommand{\leftmark}{\textsc{References}}
\renewcommand{\rightmark}{\textsc{References}}
\addcontentsline{toc}{section}{References}
\bibliography{ThinFilms}
\bibliographystyle{Martin}

\end{document}